\newtheorem{theorem}{Theorem}[section]
\newtheorem{proposition}[theorem]{Proposition}
\newtheorem{corollary}[theorem]{Corollary}
\newtheorem{lemma}[theorem]{Lemma}
\theoremstyle{definition}
\newtheorem{definition}[theorem]{Definition}
\newtheorem{remark}[theorem]{Remark}
\newcommand{\N}{\mathbb{N}}
\newcommand{\Z}{\mathbb{Z}}
\newcommand{\Q}{\mathbb{Q}}
\newcommand{\R}{\mathbb{R}}
\newcommand{\C}{\mathbb{C}}
\newcommand{\Hyp}{\mathbb{H}^2}
\newcommand{\cH}{\mathcal{H}}
\newcommand{\e}{\varepsilon}
\newcommand{\btw}{\,{-}\,}
\DeclarePairedDelimiter\cyc{\llbracket}{\rrbracket}
\renewcommand{\setminus}{\smallsetminus}
\newcommand{\Magma}{\textsc{Magma}}
\DeclareMathOperator{\SL}{SL}
\DeclareMathOperator{\PSL}{PSL}
\DeclareMathOperator{\lmost}{\mathcal{L}}
\DeclareMathOperator{\rmost}{\mathcal{R}}
\DeclareMathOperator{\Cay}{Cay}
\DeclareMathOperator{\csch}{csch}
\DeclareMathOperator{\sech}{sech}
\DeclarePairedDelimiter{\abs}{\lvert}{\rvert}
\DeclarePairedDelimiter{\gen}{\langle}{\rangle}
\DeclarePairedDelimiter{\stdrightbar}{}{\rvert}
\DeclarePairedDelimiter{\fullrightbar}{.}{\rvert}
\newcommand\rightbar{\@ifstar{\fullrightbar*}{\stdrightbar}}
\newcommand\SetSymbol[1][]{{\nonscript\;}{:}{\nonscript\;}\mathopen{}}
\providecommand\given{} 
\DeclarePairedDelimiterX{\Set}[1]\{\}{\renewcommand\given{\SetSymbol[\delimsize]}#1}
\setlist[enumerate,1]{label=\textup{(\arabic*)}}
\colorlet{link}{blue}
\definecolor{cite}{HTML}{009900}
\tikzset{vertex/.style={circle,fill,inner sep=0pt,minimum size=1.7mm}, edge/.style={semithick}, >=stealth}
\definecolor{NiceGreen}{HTML}{00CC00}
\definecolor{NiceRed}{HTML}{EE0000}
\definecolor{NiceBlue}{HTML}{0033FF}
\begin{document}

\title{Recognition and constructive membership for discrete subgroups of $\SL_2(\R)$}
\author{Ari Markowitz}
\address{
Department of Mathematics, University of Auckland, Auckland, New Zealand}
\email{ari.markowitz@auckland.ac.nz}
\begin{abstract}
We provide algorithms to decide whether a finitely generated subgroup of $\SL_2(\R)$ is discrete, solve the constructive membership problem for finitely generated discrete subgroups of $\SL_2(\R)$, and compute a fundamental domain for a finitely generated Fuchsian group. These algorithms have been implemented in \textsc{Magma} for groups defined over real algebraic number fields.
\end{abstract}

\maketitle

\section{Introduction}
The problem of deciding discreteness of a finitely generated linear group has been explored in various settings. The generating pairs of 2-generated Fuchsian groups were characterised in a series of papers by Rosenberger, Kern-Isberner, Purzitsky and Matelski \cite{rosenberger-80, rosenberger-83, matelski, purzitsky-72, purzitsky-72-2, purzitsky-74, purzitsky-76, purzitsky-rosenberger, purzitsky-rosenberger-73, rosenberger-86}. This characterisation takes the form of an algorithm, in which a core procedure is the repeated application of Nielsen transformations to the generators that minimise their traces. Using this approach, Gilman and Maskit presented an explicit algorithm that decides whether a 2-generated subgroup $\SL_2(\R)$ is discrete and free \cite{gilman-maskit, gilman}. Similarly, Kirschmer and Rüther gave an explicit algorithm that decides whether a 2-generated subgroup of $\SL_2(\R)$ is discrete \cite{kirschmer}. Similar results have been developed in the non-Archimedean setting \cite{conder, conder-schillewaert, markowitz}. All of these algorithms make use of a \emph{replacement method}, which iteratively replaces generators in the generating set until some condition is satisfied.

An alternative to the replacement method is Riley's algorithm to decide whether a finitely-generated subgroup of $\PSL_2(\R)$ is discrete \cite{riley}. Riley's algorithm applies a ``brute force'' search for generators of a \emph{Dirichlet domain} \cite[\textsection 9.4]{beardon}, halting when it finds either such a generating set or a pair of words that violates Jørgensen's Inequality \cite{jorgensen}. While this algorithm is more general, the number of words to be considered is exponential in the maximum length of a word the algorithm must consider. By contrast, the algorithm of Gilman and Maskit has complexity which is polynomial in the maximal length of a word in the final generating set \cite{jiang}.

Closely related to this decision problem is the \emph{constructive membership problem}: given a finitely generated subgroup $G$ of a group $P$ and some $g \in P$, decide whether or not $g \in G$, and if so then write $g$ as word in a specified generating set of $G$. The constructive membership problem was solved for 2-generator discrete free subgroups of $\SL_2(\R)$ by Eick, Kirschmer, and Leedham-Green \cite{eick}, for 2-generator discrete subgroups of $\SL_2(\R)$ by Kirschmer and Rüther \cite{kirschmer}, and for finitely generated discrete free subgroups of the automorphism group of a locally finite tree by the author \cite{markowitz}. In \cite{eick} the authors pose the open problem of extending their methods to groups of arbitrary rank.

In this paper we provide algorithms to decide discreteness of finitely generated discrete subgroups of $\SL_2(\R)$, and solve the constructive membership problem for such groups. The first algorithm uses a replacement method to decide whether a group is discrete and torsion-free, which has at most linear complexity with respect to the maximum length of a word that it must consider.

\begin{restatable}{theoremx}{recognition}\label{recognition}
  Let $X$ be a finite subset of $\SL_2(\R)$ or $\PSL_2(\R)$. There exists an algorithm that decides whether or not $X$ generates a discrete group.
\end{restatable}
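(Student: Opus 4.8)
The plan is to pass to $\PSL_2(\R)$ and exploit the action on the hyperbolic plane $\Hyp$. Since the covering $\SL_2(\R)\to\PSL_2(\R)$ has finite kernel $\{\pm I\}$, a subgroup of $\SL_2(\R)$ is discrete if and only if its image in $\PSL_2(\R)$ is discrete, so I would first reduce to the case $X\subseteq\PSL_2(\R)$, where each element $g$ acts as an isometry of $\Hyp$ and is classified as elliptic, parabolic, or hyperbolic according to whether $\abs{\tr g}$ is less than, equal to, or greater than $2$. The first cheap test is for \emph{infinite-order elliptics}: an elliptic element is a rotation through an angle $\theta$ with $\tr g=2\cos\theta$, and it has infinite order exactly when $\theta/\pi$ is irrational, in which case $\gen{X}$ is non-discrete. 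Over a real algebraic number field this is decidable, so I would immediately reject any generator (and any short product the algorithm is forced to examine) that is an infinite-order elliptic.

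The core is a replacement method in the spirit of Nielsen reduction. I would attach to the current generating set a geometric complexity measure --- for instance the displacements $d(p,g\cdot p)$ of a generic basepoint $p$, or the translation lengths of the hyperbolic elements --- and repeatedly apply Nielsen transformations $g_i\mapsto g_i^{-1}$ and $g_i\mapsto g_ig_j^{\pm1}$ that strictly decrease this measure. Geometrically the aim is to drive the generators toward a set whose half-plane configuration bounds a candidate Dirichlet polygon $D$ around $p$: the finitely many generators determine finitely many perpendicular bisectors, whose intersection is a candidate fundamental domain with recorded side-pairings. When a side of $D$ is paired by a transformation not yet present, that word is adjoined, so the set of transformations under consideration grows in a controlled way.

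Correctness then splits along discreteness. If $\gen{X}$ is discrete then, being finitely generated, it is geometrically finite and so admits a finite-sided fundamental polygon; I would argue that the reduction terminates at a reduced set whose recorded side-pairings, together with the vertex and edge cycles, satisfy the hypotheses of Poincaré's polygon theorem (each cycle angle summing to $2\pi/k$ for an integer $k$, with finite-order elliptics producing cone points). The theorem then certifies discreteness and simultaneously yields a fundamental domain. If $\gen{X}$ is not discrete then the reduction cannot stabilise at an admissible polygon, and I would detect this through a violation of Jørgensen's inequality: two non-commuting isometries driven too close together furnish a witness of non-discreteness, at which point the algorithm halts with a negative answer.

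The hard part is making the halt unconditional: I must show the complexity measure is well-founded and bound the number of reduction steps so that, in the discrete case, a Poincaré-admissible configuration is reached in finitely many steps, while in the non-discrete case a Jørgensen (or infinite-order elliptic) witness necessarily appears before that bound is exceeded. This rests on a quantitative separation statement --- a Zassenhaus--Margulis-type neighbourhood of the identity in which a discrete group can contain only commuting elements --- which simultaneously lower-bounds the achievable complexity for discrete groups and forces the witness for non-discrete ones. Two further technical points remain to be dispatched: the treatment of torsion, since finite-order elliptics are admissible and must be incorporated as cone points rather than rejected (which is where passing cleanly between the torsion-free and general cases matters), and the exact arithmetic over real algebraic number fields needed to decide the equalities and inequalities on traces, displacements, and the rationality of $\theta/\pi$ that drive every branch of the algorithm.
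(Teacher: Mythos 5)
Your core loop --- displacement-decreasing Nielsen replacements, a quantitative separation bound to force termination, and a two-generator witness of non-discreteness --- is essentially the paper's Algorithm~1, which does exactly this for the \emph{torsion-free} case (using Beardon's collar inequality $\Phi(\abs{g})\Phi(\abs{h})\geq 1$, valid for discrete torsion-free non-abelian groups, in place of J{\o}rgensen, and certifying discreteness of a reduced set via an embedded Cayley graph rather than directly via Poincar\'e's theorem). But the statement to be proved is the general one, and the point where your proposal says the treatment of torsion ``remains to be dispatched'' is precisely where the paper's proof of this theorem lives. The paper does not incorporate cone points into the replacement method at all: it applies Selberg's Lemma algorithmically (computing a prime $p$ such that the kernel $H$ of reduction modulo $p$ is torsion-free, following Detinko et al.), uses Schreier's Lemma to write down a finite generating set for the finite-index subgroup $H$ from $X$ and coset representatives, and then runs the torsion-free algorithm on $H$. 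Since $[G:H]<\infty$, $G$ is discrete if and only if $H$ is, and since $H$ is torsion-free by construction, ``discrete and torsion-free'' coincides with ``discrete'' for $H$.

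This is not a cosmetic difference; your route has a genuine gap. A displacement-decreasing replacement scheme cannot be made well-founded in the presence of admissible torsion: a discrete group may contain finite-order elliptic elements of arbitrarily small displacement (cone points arbitrarily close to the basepoint), so no Zassenhaus--Margulis-type neighbourhood bounds the achievable complexity from below --- the Margulis-type statements only force the group generated by small elements to be elementary, not trivial. This is exactly why Beardon's inequality in the paper carries the torsion-free hypothesis, and why trace-minimising replacement algorithms that do accommodate torsion are known only for two-generator groups (Rosenberger, Purzitsky, Gilman--Maskit); extending them to arbitrary rank is the difficulty that both Riley's brute-force algorithm and this paper deliberately sidestep. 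Without the Selberg--Schreier reduction (or a fully worked-out Poincar\'e-with-cone-points termination argument, which would be a substantial new contribution), your algorithm is only justified for torsion-free groups, and the theorem as stated is not proved.
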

\begin{restatable}{theoremx}{membership}\label{membership}
  Let $P$ be $\SL_2(\R)$ or $\PSL_2(\R)$. Let $X$ be a finite subset of $P$ generating a discrete group, and let $g \in P$. There exists an algorithm that decides whether $g$ can be written as a word in $X$, and if so returns such a word.
\end{restatable}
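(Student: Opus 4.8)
The plan is to transport the problem to the hyperbolic plane $\Hyp$, on which $P$ acts by isometries, and to decide membership geometrically by reducing points into a fundamental domain. Write $G = \gen{X}$ and let $\overline{G} \le \PSL_2(\R)$ be its image; since $X$ generates a discrete group, $\overline{G}$ is a finitely generated Fuchsian group, hence geometrically finite. I would first compute a Dirichlet domain
\[
  D = \Set{ x \in \Hyp \given d(x,p) \le d(x, \gamma p) \text{ for all } \gamma \in \overline{G} }
\]
for $\overline{G}$, centred at a base point $p$, together with its finitely many side-pairing transformations, each recorded as a word in $X$. Here $d$ is the hyperbolic distance, and $I$ would choose $p$ to avoid the (discrete) set of elliptic fixed points of $\overline{G}$, so that $p$ has trivial stabiliser in $\overline{G}$ and lies in the interior of $D$. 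The translates $\gamma D$ then tile $\Hyp$, and the side-pairings and their inverses realise all the neighbours of $D$ in this tiling.

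Given $g \in P$ with image $\bar g \in \PSL_2(\R)$, I would locate $q = \bar g \cdot p$ by a folding (nearest-neighbour) descent. Starting from $y_0 = q$ and $w_0 = 1$, at step $j$ I test the finitely many inequalities $d(y_j, p) \le d(y_j, s^{\pm 1} p)$ over the side-pairings $s$. If all hold then $y_j \in D$ and I stop; otherwise some neighbour $t \in \{s^{\pm 1}\}$ satisfies $d(y_j, t p) < d(y_j, p)$, and I set $y_{j+1} = t^{-1} y_j$ and $w_{j+1} = w_j t$, maintaining $y_j = w_j^{-1} q$. Each step strictly decreases $d(y_j,p) = d(q, w_j p)$, and since $\overline{G}p$ is discrete only finitely many orbit points lie within a fixed distance of $q$; thus the descent terminates with a word $w$ and $y_m = w^{-1}q \in D$, so that $wp$ is an orbit point nearest $q$. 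Now $\bar g \in \overline{G}$ if and only if $\overline{w} = \bar g$, which I verify by comparing the matrices directly: if $\bar g \in \overline{G}$ then $\bar g p$ is the unique distance-zero nearest orbit point, forcing $wp = \bar g p$ and hence, since $p$ has trivial stabiliser, $\overline{w} = \bar g$; conversely if $\overline{w} \ne \bar g$ then $\bar g \notin \overline{G}$, and a fortiori $g \notin G$.

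It remains to lift the answer to $\SL_2(\R)$. If $P = \PSL_2(\R)$ we are done. Otherwise, if the procedure reports $\bar g \notin \overline{G}$ then $g \notin G$; if it returns a word $w$ in $X$ with $\overline{w} = \bar g$, then evaluating $w$ in $\SL_2(\R)$ gives $w = \epsilon g$ with $\epsilon \in \{\pm I\}$ computable by inspection. If $\epsilon = I$ then $g = w \in G$. If $\epsilon = -I$ then $g \in G$ if and only if $-I \in G$, which cannot be settled by re-running the membership routine on $-I$ (this merely reproduces the same question). Instead, writing $F$ for the free group on $X$, Poincaré's theorem presents $\overline{G}$ via the side-pairings and a finite set of relators $R$, so $\ker(F \to \overline{G})$ is the normal closure of $R$; as $\{\pm I\}$ is central, the image of this kernel in $\SL_2(\R)$ is the subgroup of $\{\pm I\}$ generated by the values of the relators. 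Hence $-I \in G$ if and only if some relator in $R$ evaluates to $-I$ in $\SL_2(\R)$, a finite check; this decides membership and, in the affirmative case, yields the required word.

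The main obstacle is the construction underlying the first paragraph: producing $D$ and its side-pairings as explicit words in $X$, and certifying that the descent is correct and terminating. Correctness of the local test rests on the fact that $D$ equals the intersection of the finitely many half-planes cut out by its sides, so a point lies outside $D$ exactly when it is strictly closer to some neighbouring centre $tp$ than to $p$; termination rests on the proper discontinuity of the $\overline{G}$-action. Both follow from discreteness, but every step must be carried out exactly: since $X$ is defined over a real algebraic number field, the distances $d(q,\gamma p)$ and the matrix comparisons are all exactly decidable. The remaining technical point is torsion, namely ensuring that $p$ avoids the discrete elliptic-fixed-point set and correctly accounting for elliptic side-pairings; this complicates the bookkeeping but leaves the structure of the argument intact.
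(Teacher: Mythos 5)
There is a genuine gap, and it is exactly the step you yourself flag as ``the main obstacle'': you begin by saying you would ``compute a Dirichlet domain $D$ \ldots\ together with its finitely many side-pairing transformations, each recorded as a word in $X$,'' but you never give an algorithm for doing this, and this computation \emph{is} the hard content of the theorem. Given only a finite $X$ known to generate a discrete group, there is no off-the-shelf exact procedure in your argument that produces the finitely many sides of $D(\overline{G},p)$ together with certificates (words in $X$) for the side pairings; your closing remark that correctness and termination ``both follow from discreteness'' concerns the descent \emph{after} $D$ is in hand, not the construction of $D$. Everything downstream in your proposal (the folding descent, the nearest-orbit-point argument, the Poincar\'e presentation used to test whether $-I \in G$) is conditional on this unconstructed object. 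The descent itself is fine once one grants a finite-sided $D$ with its side pairings: a finite-sided convex Dirichlet polygon is the intersection of the half-planes determined by its sides, so failure of one of the finitely many inequalities yields a strictly closer orbit point, and discreteness bounds the number of steps. But that is the easy half.

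The paper closes precisely this gap, and does so by a different decomposition than the one you attempt. Rather than building a fundamental domain for $G$ directly (torsion and all), it first passes to a finite-index torsion-free subgroup $H \leq G$ using Selberg's Lemma (with an effective choice of prime) and \Cref{thm:schreier}, which expresses generators of $H$ as explicit words in $X$; then \Cref{alg:free} converts these into a \emph{reduced} generating set, and \Cref{prop:short-words-closer} proves that the finitely many \emph{short words} in a reduced set suffice to cut out $D(H,v)$ and to drive the descent of \Cref{alg:fund-domain}. Membership in $G$ is then reduced to membership in $H$ by the coset test ($g \in G$ iff some element of $Sg$ lies in $H$), and the $\SL_2$ versus $\PSL_2$ ambiguity is handled as in \cite{eick}. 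Your idea of choosing the centre $p$ off the elliptic fixed-point set avoids torsion in the \emph{geometry} of $D$, but it does not address the algorithmic production of $D$; the paper avoids torsion in the \emph{group} instead, which is what makes its replacement machinery (and hence the whole construction) terminate with a certified answer. To repair your argument you would either have to import the paper's reduction machinery, or invoke and justify an external subroutine (e.g.\ Riley's search, which terminates on the promise of discreteness) for computing $D$ with side pairings as words in $X$ --- neither of which your proposal currently does.
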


Our approach is as follows. We first solve the recognition and constructive membership problems for finitely generated discrete torsion-free subgroups of $\PSL_2(\R)$ (which straightforwardly extends to $\SL_2(\R)$, as shown by \cite[Lemma 4.1]{eick}). Our solution is based on the approach for groups acting on trees in \cite{markowitz, weidmann}, which in turns generalises Nielsen's method for finding a free basis for a subgroup of a free group \cite[\textsection 2]{lyndon-schupp}. For trees, this method works by identifying the generators with paths on the tree, and iteratively applying Nielsen transformations to minimise the paths with respect to a given pre-well-ordering.

We apply this approach to the action of $\PSL_2(\R)$ on the hyperbolic plane $\Hyp$. Given a finite generating set $X$ of a subgroup $G$ of $\PSL_2(\R)$, we consider a map from the Cayley graph of the free group of reduced words in $X$ to $\Hyp$. \Cref{alg:free} iteratively applies one of a finite set of Nielsen transformations to $X$ to minimise the edge lengths of the corresponding Cayley graph. After a finite number of iterations, the algorithm finds either an elliptic element, or an indiscrete 2-generated subgroup, or a generating set for $G$ on which no more such transformations can be performed. In the third case, the group is discrete and torsion-free.

We show that the generating set obtained by this algorithm can be used to compute a fundamental domain, giving an algorithm similar to \cite[Algorithm 1]{eick} that solves the constructive membership problem for finitely generated discrete torsion-free subgroups of $\PSL_2(\R)$.

To decide discreteness of $G$, we apply Selberg's Lemma to find a finite-index torsion-free subgroup $H$ of $G$. Deciding discreteness of $H$ is then equivalent to deciding discreteness of $G$. By applying the constructive membership algorithm to the cosets of $H$ in $G$, we solve the constructive membership problem for $G$.

Our algorithm returns a ``signature'' that certifies that the group is either discrete (or discrete and torsion-free) or not. In the positive case, this is a \emph{reduced} generating set for a finite-index discrete torsion-free subgroup of $G$ together with a set of coset representatives; in the negative case, this is a 1- or 2-generated indiscrete subgroup of $G$. If $G$ is discrete, then we provide algorithms to compute a convex fundamental domain for $G$.

If the input matrices have real number entries, then we assume that the algorithms are executed by an oracle capable of exact comparisons of real numbers. For matrices over an algebraic number field, the algorithms can be implemented with exact precision; see Section \ref{sec:exact}.

Voight described an algorithm to compute a fundamental domain for Fuchsian groups \cite{voight} and implemented it for arithmetic groups in \Magma\ \cite{magma}.
We have implemented the remaining algorithms in \Magma\ for groups defined over real algebraic number fields \cite{markowitz-magma}. In \Cref{sec:complexity} we give a brief complexity analysis of the algorithms. In \Cref{sec:magma} we report the performance of the implementations.

\section{Preliminaries}
\subsection{Notation}
We fix the following terminology:
\begin{itemize}
  \item $\Hyp$ denotes the hyperbolic plane.
  \item $X$ is a finite subset of $\PSL_2(\R)$ generating a group $G$.
  \item $X^- = \Set{x^{-1} \given x \in X}$ and $X^\pm = X \cup X^-$.
  \item $v$ is a (distinguished) vertex of $\Hyp$. It is chosen arbitrarily, but once chosen it remains fixed.
  \item $d \colon \Hyp \times \Hyp \to [0, \infty)$ is the hyperbolic distance metric.
  \item If $g \in \PSL_2(\R)$, then the \emph{displacement} of $g$ is $\abs{g} = d(v, gv)$. Note that this differs from the \emph{translation length} of $g$, which is the minimum of $d(w, gw)$ over all $w \in \Hyp$ (see \cite[p.\ 112]{beardon}).
  \item If $x, y \in \Hyp$, then $[x, y]$ denotes the unique geodesic from $x$ to $y$.
  \item $F_X$ is the free group of reduced words in $X$.
  \item $\Cay(F_X)$ and $\Cay(G, X)$ are the geometric realisations of the Cayley graphs of $F_X$ and $G$ respectively with generating set $X$.
  \item Define a map $\phi \colon \Cay(F_X) \to \Hyp$ that sends each vertex $g$ to $gv$ and restricts on each edge to a scaled isometry. This implies that the image of an edge of $\Cay(F_X)$ is a geodesic in $\Hyp$. Note that $\phi$ factors through $\Cay(G, X)$, induced by the surjection $F_X \twoheadrightarrow G$.
  \item Let $\mu$ be the metric on $\Cay(F_X)$ inherited from $\Hyp$: namely, $\mu(a, b)$ is the arc length of $\phi \circ \gamma$, where $\gamma$ is the geodesic from $a$ to $b$. Note that $\mu(a, b) = d(\phi(a), \phi(b))$ whenever $a$ and $b$ lie on a common edge.
  \item $T$ is the metric space $(\Cay(F_X), \mu)$.
  \item Suppose $S \subseteq T$ and $A \subseteq \Hyp$. By abuse of notation, define $S \cap A = \Set{s \in S \given \phi(s) \in A}$.
  \item Given $a, b \in T$, the \emph{segment} $[a, b]_T$ is the geodesic from $a$ to $b$ in $T$.
\end{itemize}

The group generated by
\[
  A =
  \begin{bmatrix}
  1 & 2 \\ 0 & 1
  \end{bmatrix}, \qquad B =
  \begin{bmatrix}
  1 & 0 \\ 2 & 1
  \end{bmatrix}
\]
is used for illustrative purposes in Figures \ref{fig:cayley-graph}, \ref{fig:leftmost-path}, \ref{fig:bad-cayley-graph}, and \ref{fig:fundamental-domain}.
\Cref{fig:cayley-graph} shows the image of $\phi$ as represented in the Poincaré disk for $v=0$ and $X = \{A, B\}$.

\begin{figure}
  \centering
  \begin{tikzpicture}
    \node {\includegraphics[width=6cm]{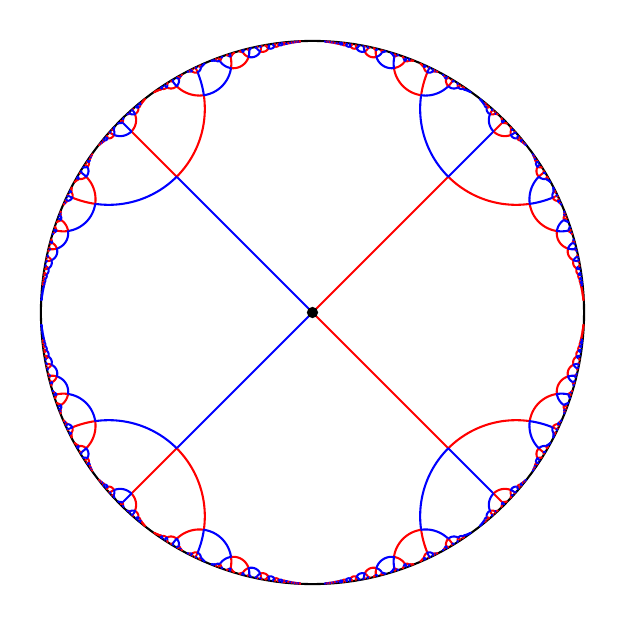}};
    \node[vertex, label={right:$v$}] at (0,0) {};
    \node[vertex, label={left:$Av$}] at (1.3,1.3) {};
    \node[vertex, label={left:$A^{-1}v$}] at (1.3,-1.3) {};
    \node[vertex, label={right:$B^{-1}v$}] at (-1.3,1.3) {};
    \node[vertex, label={right:$Bv$}] at (-1.3,-1.3) {};
  \end{tikzpicture}
  \caption{The embedding of a Cayley graph in $\Hyp$}
  \label{fig:cayley-graph}
\end{figure}

Throughout this paper we freely use the Jordan curve theorem \cite{maehara}.

\section{Reduced generating sets}

In this section we present the theory needed to describe and prove the validity of the recognition algorithm.

\subsection{Bounding paths}
There is a natural notion of angle defined for triples of points in $\Hyp$, inherited from angles in Euclidean space \cite[\textsection 7.6]{beardon}. Furthermore, angles in $\Hyp$ also inherit an \emph{orientation} from the Euclidean plane. As in Euclidean space, all angles of a simple polygon in $\Hyp$ (when the vertices are ordered in a connected cycle) have the same orientation.

Given $p, q, r \in \Hyp$, define $\measuredangle_p (q, r)$ to be the clockwise angle at $p$ from $[p, q]$ to $[p, r]$. Note that the ordering of $(q, r)$ matters: if $\measuredangle_p (q, r) \neq 0$, then $\measuredangle_p (q, r) = 2\pi - \measuredangle_p (r, q)$. We use this operation to define a cyclic order (a ternary relation that arranges elements ``on a circle''; see \cite{huntington}) on elements of $\Hyp$.

\begin{definition}
Let $B \subset X^\pm \times X^\pm \times X^\pm$ be a cyclic order such that $B(x, y, z)$ holds if
\begin{enumerate}
  \item $\measuredangle_v (xv, yv) < \measuredangle_v (xv, zv)$;
  \item $\measuredangle_v (xv, yv) = \measuredangle_v (xv, zv)$, $yv \neq zv$, and $xv \notin [yv, zv]$;
\end{enumerate}
\end{definition}

Note that (2) implies that $\{v, yv, zv\}$ are collinear. To ensure that $B$ is a cyclic order, for each $y \in X^\pm$ we must arbitrarily choose a linear order on $\Set{z \in X^\pm \given yv = zv}$. The specific choice made does not affect the arguments in this paper; all instances in which $\{v, yv, zv\}$ are collinear are handled by \Cref{lem:collinear}. We fix this cyclic order for the rest of the paper.

\begin{definition}
  Let $P = (g_1, \dots, g_n)$ be a path on $T$. For $1 \leq i \leq n$, let $x_i \in X^\pm$ be such that $g_{i+1} = x_i g_i$. If $x_{i+1}$ directly follows $x_i^{-1}$ in the cyclic order $B$ for all $i$ (that is, there is no $y \in X^\pm$ such that $B(x_i^{-1}, y, x_{i+1})$ holds), then $P$ is a \emph{leftmost path}.
\end{definition}

We give a geometric interpretation, as shown in \Cref{fig:leftmost-path}. For every subpath of the form $(g_1, g_2, g_3)$ of a leftmost path, either $\{g_1v, g_2v, g_3v\}$ is collinear or $g_3$ is the neighbour of $g_2$ minimising the nonzero angle $\measuredangle_{g_2v}(g_1v, g_3v)$. Essentially, a leftmost path ``turns left'' at each vertex by the minimum interior angle.

\begin{figure}
  \centering
  \begin{tikzpicture}
    \node {\includegraphics[width=8cm]{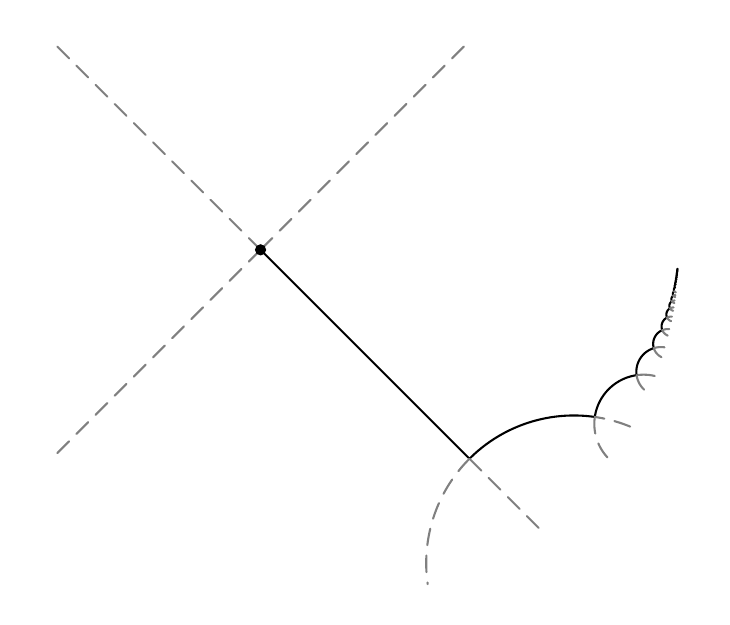}};
    \node[vertex, label={right:$g_1 v$}] at (-1.14,0.7) {};
    \node[vertex, label={[xshift=2]above:$g_2 v$}] at (1.15, -1.6) {};
    \node[vertex, label={[xshift=-5]above:$g_3 v$}] at (2.53, -1.15) {};
    \node[rotate=55] at (3, -0.3) {$\cdots$};
    \node at (-0.2, -0.65) {$e$};
  \end{tikzpicture}
  \caption{An infinite leftmost path $\lmost(e)$ in \Cref{fig:cayley-graph}}
  \label{fig:leftmost-path}
\end{figure}

\begin{definition}
  The reverse of a leftmost path is a \emph{rightmost path}. A \emph{bounding path} of $T$ is a leftmost or rightmost path. An infinite path is a \emph{bounding path} if every finite subpath is a bounding path.
\end{definition}

Given a directed edge $e$ of $T$, define $\lmost(e)$ to be the infinite leftmost path with first edge $e$. Similarly, define $\rmost(e)$ to be the infinite rightmost path with first edge $e$.

If $e = (g, xg)$ for some $g \in F_X$ and $x \in X^\pm$, then $x$ is the \emph{type} of $e$. Note that edge type is invariant under the action of $F_X$.

\begin{proposition}\label{prop:leftmost-repeats}
  Let $P = (g_i)_{i \in \N}$ be an infinite bounding path on $T$. The sequence $(x_i)_{i \in \N}$ of edge types of $P$ is cyclic: there exists $n > 0$ such that $x_{i+n} = x_i$ for all $i \in \N$. If $n$ is minimal, then $x_i \neq x_j$ for all $i < j < i+n$.
\end{proposition}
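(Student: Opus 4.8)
The plan is to exploit the fact that, along a leftmost path, the \emph{type} of each edge is forced by the type of the preceding edge through a rule that is independent of the vertex one currently occupies. Indeed, the defining condition of a leftmost path requires only that $x_{i+1}$ be the immediate successor of $x_i^{-1}$ in the fixed cyclic order $B$ on $X^\pm$. Since $B$ is a total cyclic order on the finite set $X^\pm$ (the arbitrary linear orders chosen on collinear ties ensure every element occupies a distinct position), each element has a unique immediate successor, and I would record this as a bijection $\mathrm{succ}\colon X^\pm \to X^\pm$. Setting $T = \mathrm{succ} \circ (\,\cdot\,)^{-1}$, the leftmost condition becomes exactly the recurrence $x_{i+1} = T(x_i)$, whence $x_i = T^{i-1}(x_1)$ for all $i$.

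Next I would observe that $T$ is a bijection of $X^\pm$, being the composite of the involution $a \mapsto a^{-1}$ with the bijection $\mathrm{succ}$. As $X^\pm$ is finite, the $T$-orbit of $x_1$ is a finite cycle; let $n$ be its length, i.e.\ the least positive integer with $T^n(x_1) = x_1$. Because $T^n$ restricts to the identity on this cycle and each $x_i = T^{i-1}(x_1)$ lies on it, I get $x_{i+n} = T^n(x_i) = x_i$ for every $i$, which establishes cyclicity with period $n$.

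For the minimality claim, note that any period $m$ forces $x_{1+m} = x_1$, hence $T^m(x_1) = x_1$ and $n \mid m$, so the minimal period equals the cycle length $n$. If $x_j = x_k$ for some $i \le j < k < i + n$, then $T^{k-j}$ would fix the cycle element $x_j$ with $0 < k - j < n$, contradicting that the cycle has length $n$; thus the types of one full period are pairwise distinct.

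Finally, a rightmost path is by definition the reverse of a leftmost path; reversing a finite leftmost path and tracking types shows that a rightmost path with types $(y_j)$ satisfies the symmetric recurrence $y_{j+1} = \mathrm{pred}(y_j^{-1})$, where $\mathrm{pred} = \mathrm{succ}^{-1}$. This is again a bijective recurrence on the finite set $X^\pm$, so the rightmost case follows verbatim. The only genuinely delicate point, and where I would be most careful, is this reduction to a permutation: I must check that the leftmost rule really is vertex-independent (so that $x_{i+1}$ depends on $x_i$ alone) and that the successor in $B$ is unambiguous; both hold precisely because $B$ is a single fixed total cyclic order on the finite set $X^\pm$.
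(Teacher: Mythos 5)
Your proof is correct and takes essentially the same route as the paper: the paper defines the map $\eta(x) = $ (element directly following $x^{-1}$ in the cyclic order), observes it is a permutation of the finite set $X^\pm$ as the composite of the inversion involution with a cyclic permutation, and deduces the period as the order of $x_1$ under $\eta$ --- exactly your $T = \mathrm{succ} \circ (\,\cdot\,)^{-1}$. Your write-up is somewhat more explicit about the minimality and distinctness claims (which the paper leaves as an immediate consequence of the orbit being a cycle), but the underlying argument is identical.
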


\begin{proof}
  We assume $P$ is a leftmost path; the proof for rightmost paths is similar.
  Define $\eta \colon X^\pm \to X^\pm$ sending $x$ to the element directly following $x^{-1}$ in the cyclic ordering of $X^\pm$. Observe that $\eta$ is a permutation of $X^\pm$: it is the composition of the involution $x \mapsto x^{-1}$ and a cyclic permutation. Since $x_{i+1} = \eta(x_i) = \eta^{i-1}(x_1)$ for all $i \in \N$, and $X^\pm$ is finite, the proposition follows. Note that the length of the repeated sequence is the order of $x_1$ under $\eta$.
\end{proof}

\begin{definition}
  Let $P = (x_1, x_2, \dots)$ be an infinite bounding path on $T$. Let $n$ be as in \Cref{prop:leftmost-repeats}. The word $x_1 \dots x_n$ is the \emph{principal word} of $P$.
\end{definition}

Let $P$ be an infinite bounding path, and let $P'$ be a subpath of $P$ with the same initial vertex. The \emph{principal word} of $P$ is the principal word of $P'$. If $Q$ is a bi-infinite bounding path, then each principal word of a subpath of $Q$ is a principal word of $Q$. Given two principal words of $Q$, one can be transformed into the other by cyclically permuting its terms.

Suppose $P$ is a bi-infinite path with principal word $g = x_1 \dots x_n$. Let $h$ be a word in $X$ such that $(h, x_1h)$ is an edge of $P$. We see that $h^{-1}P$ contains the vertex $g^k$ for all $k \in \Z$; hence every bi-infinite path on $T$ is isometric in $F_X$ to a path stabilised by its principal word.

\begin{definition}
  A nontrivial subword of a principal word is \emph{short}.
\end{definition}

\subsection{Self-intersecting paths}

A difficulty in characterising $G$ using $X$ is that the map from $\Cay(G, X)$ to $\Hyp$ need not be injective, as shown in \Cref{fig:bad-cayley-graph}; this can occur regardless of whether or not $G$ is discrete. We seek a generating set $X'$ such that $\Cay(G, X')$ embeds into $\Hyp$. In other words, we attempt to eliminate the following obstruction:

\begin{figure}
\centering
\begin{tikzpicture}
  \node {\includegraphics[width=6cm]{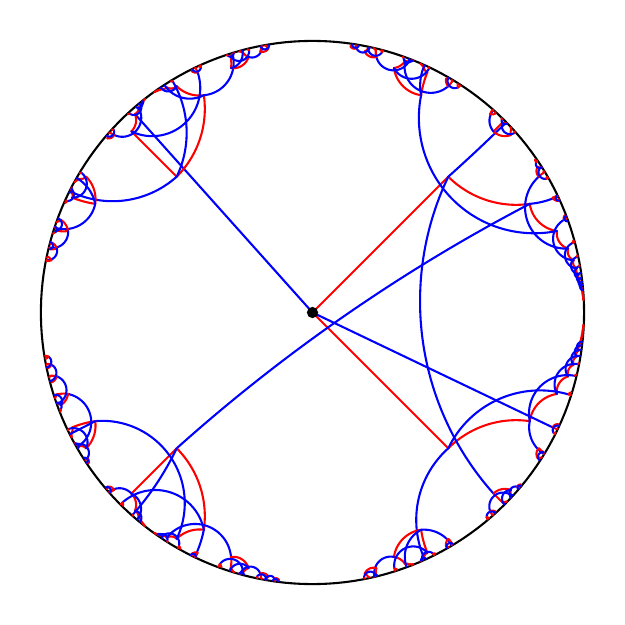}};
  \node[vertex, label={below left:$v$}] at (0,0) {};
\end{tikzpicture}
\caption{The embedding of a worse-chosen Cayley graph of the group in \Cref{fig:cayley-graph}}
\label{fig:bad-cayley-graph}
\end{figure}

\begin{definition}\label{def:arc-map}
  A \emph{self-intersection} of $T$ is a point $w \in \phi(T)$ such that $\abs{\phi^{-1}(w)} > 1$.
\end{definition}

The results in this section give conditions that determine the existence of self-intersections. While self-intersections are not detected directly in the algorithms in this paper, they are used to prove that the algorithms give the desired output.

Recall that $\PSL_2(\R)$ is isomorphic to the group of orientation-preserving isometries of $\Hyp$. Also recall from \cite[Definition 4.3.2]{beardon} that every nontrivial $g \in \PSL_2(\R)$ is:

\begin{itemize}
  \item \emph{elliptic} if $g$ fixes no points on the boundary $\partial\Hyp$ (and hence fixes one point on $\Hyp$);
  \item \emph{parabolic} if $g$ fixes one point on $\partial\Hyp$; or
  \item \emph{hyperbolic} if $g$ fixes two points on $\partial\Hyp$.
\end{itemize}

A \emph{level} curve of $g$ is a curve of constant curvature stabilised by $g$. The level curves of $g$ partition $\Hyp$. If $g$ is elliptic, then its level curves are circles centred at the fixed point of $g$; if $g$ is parabolic, then its level curves are \emph{horocycles} tangent to the fixed boundary point of $g$; and if $g$ if hyperbolic, then its level curves are \emph{hypercycles} parallel to the translation axis of $g$ \cite[\textsection 7.27]{beardon}.

The following results demonstrate \emph{local-to-global} properties of $\Hyp$ and finite generating sets of $\PSL_2(\R)$. Our goal is to show that if $T$ has a self-intersection, then one of a finite set of paths on $T$ has a self-intersection. These results are analogous to the local-to-global properties of \cite[Lemma 2.12]{markowitz}.

\begin{lemma}\label{lem:collinear}
  If there exists distinct $x, y \in X^\pm$ such that $x \neq y^{-1}$ and $\{v, xv, yv\}$ are collinear, then there exists a self-intersecting bounding path on $T$.
\end{lemma}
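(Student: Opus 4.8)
The plan is to produce the self-intersection from a short backtracking leftmost path, and to reduce the whole statement to finding two points of $X^\pm v$ on a common geodesic ray from $v$. The mechanism is this: suppose $p,q\in X^\pm$ are such that $q$ directly follows $p$ in the cyclic order $B$. Then the length-two path $p\to e\to q$ has edge types $p^{-1}$ and $q$, and since $q$ directly follows $(p^{-1})^{-1}=p$, it is leftmost; indeed it is the initial segment of $\lmost((p,e))$. If in addition $pv$ and $qv$ lie on a common ray $R$ from $v$, then its image $[pv,v]\cup[v,qv]$ retraces the subsegment of $R$ from $v$ to whichever of $pv,qv$ is nearer $v$. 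Provided the generators move $v$ (trivial and isolated elliptic elements are detected separately), this subsegment has positive length, the two edges overlap, and $\lmost((p,e))$ is an infinite leftmost path — hence a bounding path — whose first two edges already self-intersect. So it suffices to locate two points of $X^\pm v$ on a common ray from $v$ that are consecutive in $B$.

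First I would treat the case where $xv$ and $yv$ lie on the same ray $R$ from $v$; this is exactly the configuration produced by clause (2) of the definition of $B$, where this lemma is applied, since equal clockwise angles at $v$ place $xv,yv$ on a common ray. The points of $X^\pm v$ on $R$ share a single angular coordinate at $v$, so by clause (1) they constitute a contiguous block of the cyclic order, linearly ordered within the block by clause (2) (and by the fixed tie-break among coincident points). Since $\{xv,yv\}\subseteq R$, this block has at least two elements and therefore contains a consecutive pair $p,q$ with $q$ directly following $p$ and $pv,qv\in R$; applying the mechanism above to this pair finishes the case. When $xv=yv$ the edges $(e,x),(e,y)$ have identical image and the same argument applies verbatim, taking $p,q$ consecutive among the coincident generators.

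The remaining case is the main obstacle: when no ray from $v$ carries two points of $X^\pm v$, the collinearity of $\{v,xv,yv\}$ must be antipodal, with $v$ strictly between $xv$ and $yv$ on the line $\ell$ through them, and the hypothesis $x\neq y^{-1}$ shows this alignment need not arise from a single element's axis. Now $x\to e\to y$ traces the geodesic $[xv,yv]\subseteq\ell$ without backtracking, so the self-intersection must be found globally rather than in a single vertex. My plan is to extend a leftmost path through the directed edge $(x,e)$ to a bi-infinite bounding path $P$, which is periodic by \Cref{prop:leftmost-repeats}; assuming $\phi|_P$ were injective, $\phi(P)$ would be a properly embedded line, and I would use the Jordan curve theorem to track the side of $\phi(P)$ containing the ray of $\ell$ towards $yv$ and force $P$ to meet $\ell$ again, contradicting injectivity. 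The delicate point — and where the real work lies — is controlling the global shape of $\phi(P)$ from the purely combinatorial definition of leftmost, since the actual turning angles at a vertex $gv$ are governed by the configuration $\{s\,gv\}_{s\in X^\pm}$ around $gv$ and need not match the angles at $v$ that define $B$; an attractive alternative is to exhibit a same-ray coincidence of two neighbours at some translate $gv$ and thereby reduce this case to the previous one.
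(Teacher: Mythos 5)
Your same-ray case is correct and is exactly the paper's argument. The paper's proof takes $y'$ to be the element of $X^\pm$ directly following $x$ in the cyclic order, observes that either $y'=y$ or $B(x,y',y)$ holds, deduces that $y'v$ lies on the ray from $v$ through $xv$ (clause (1) forces the direct successor of $x$ to sit at angle zero as soon as some element other than $x$ does), and then notes that the two-edge leftmost path through the vertices $x$, $1$, $y'$ has overlapping image edges, hence is a self-intersecting bounding path. Your contiguous-block formulation of the successor argument is the same mechanism, argued slightly more carefully, and your reduction of clause-(2) tie-breaking to this configuration is also how the paper uses the lemma.

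The antipodal case, which you correctly isolate as the obstacle and leave open, is a gap in your proposal --- but it is one you could not have closed, because the paper does not prove it either, and in that case the statement is false. In the paper's proof, when $v$ lies strictly between $xv$ and $yv$, the assertion ``in either case $\{v,xv,y'v\}$ is collinear'' does not follow (clause (1) only bounds $\measuredangle_v(xv,y'v)$ by $\measuredangle_v(xv,yv)=\pi$), and even in the branch $y'=y$ the final sentence fails: the image of the path is then the geodesic $[xv,v]\cup[v,yv]$, which does not self-intersect. For an actual counterexample, take $v=i$, $h\colon z\mapsto Rz$, $r$ the rotation by $-\pi/2$ about $i$, and $x=hr$, $y=h^{-1}r$: then $x,y$ are distinct, $x\neq y^{-1}$, and $\{v,xv,yv\}=\{i,Ri,i/R\}$ is collinear with $v$ in the middle, while $xv,x^{-1}v,yv,y^{-1}v$ point in four pairwise-orthogonal directions at distance $\log R$. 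Consequently every non-backtracking path in $\Cay(F_X)$ maps to a piecewise geodesic with sides of length $\log R$ meeting at angles at least $\pi/2$, so for $R$ large $\phi$ is injective by the standard local-to-global lemma for broken geodesics, and no bounding path self-intersects. So neither your Jordan-curve plan nor your fallback (a same-ray coincidence at some translate $gv$) can work in general. The saving grace, which you half-noticed, is that the lemma is only ever invoked in the common-ray situation: the tie-breaking remark after the definition of $B$, and the zero-interior-angle step in \Cref{prop:path-impl-clock}, both concern two neighbours of a vertex lying on a common ray from it. With the hypothesis strengthened from ``collinear'' to ``on a common ray from $v$'' (which is all the paper needs, and which also makes the condition $x\neq y^{-1}$ unnecessary), your proposal is a complete and correct proof by the paper's own method.
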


\begin{proof}
  Let $y' \in X^\pm$ directly follow $x$ with respect to the cyclic ordering $\cyc{\cdot, \cdot, \cdot}$. Either $y' = y$ or $\cyc{x, y', y}$; in either case $\{v, xv, y'v\}$ is collinear. The path $(v, x^{-1}v, y'v)$ is therefore a self-intersecting bounding path on $T$.
\end{proof}

Bounding paths on $T$ provide a ``bound'' on the long-term behaviour of paths on $T$, in the following sense:

\begin{proposition}\label{prop:path-impl-clock}
  If $T$ has a self-intersection, then there exists a self-intersecting bounding path on $T$.
\end{proposition}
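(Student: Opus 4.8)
The plan is to upgrade an arbitrary self-intersection into one carried by a single bounding path, using that $\Cay(F_X)$ is a tree, the Jordan curve theorem, and the fact that the leftmost rule is the same at every vertex. First I would clear away the degenerate cases. If some triple $\{v, xv, yv\}$ with distinct $x, y \in X^\pm$ and $x \neq y^{-1}$ is collinear, then \Cref{lem:collinear} already produces a self-intersecting bounding path, so I may assume no such collinearity; likewise I may assume $\abs{x} > 0$ for all $x \in X^\pm$, since an edge of length $0$ is collapsed by $\phi$ and gives an immediate self-intersection. Under these assumptions every clockwise angle $\measuredangle_v(xv, yv)$ between distinct neighbours is positive and strictly less than $\pi$, and by $F_X$-invariance of edge type the same holds, with the same finite list of values, at every vertex; in particular the leftmost continuation is everywhere uniquely defined.

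Because $T$ is a tree, any two distinct points $a, b$ with $\phi(a) = \phi(b)$ are joined by a unique arc $[a,b]_T$ whose image is a closed geodesic polygon. I would choose a self-intersection so that the bounded disk it cuts out is as small as possible: over all self-intersecting arcs $[p,q]_T$ whose image $\phi([p,q]_T)$ is a simple closed curve, minimise the hyperbolic area of the disk $D$ bounded by that curve (Jordan curve theorem). By Gauss--Bonnet the area of such a $k$-gon is $(k-2)\pi$ minus the sum of its interior angles; since those angles lie in a fixed finite set and are all strictly below $\pi$, the area grows at least linearly in the number of edges $k$, so only finitely many combinatorial types can achieve any given area and the minimum is attained by some $\gamma = \phi([a,b]_T)$ bounding a minimal disk $D$.

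With this minimal $D$ fixed, I would launch a bounding path inward and trap it. Either $[a,b]_T$ is already a bounding path, in which case the infinite leftmost or rightmost path extending its first edge contains $[a,b]_T$ and hence self-intersects at the identification $\phi(a) = \phi(b)$; or, comparing at each interior vertex the interior angle of $\gamma$ with the minimal turn, the extremal (leftmost or rightmost, according to the orientation of $\gamma$) continuation at some vertex subtends a strictly smaller angle and so points into the open disk $D$. Let $L$ be the infinite bounding path beginning with that inward edge. Since each edge has length at least $\min_{x \in X^\pm}\abs{x} > 0$, the path $L$ has infinite length. It cannot leave $D$: an exit would cross $\gamma \subseteq \phi(T)$, and concatenating the relevant sub-arc of $L$ with a sub-arc of $[a,b]_T$ would exhibit a simple self-intersection loop bounding a strictly smaller disk, contradicting minimality of $D$ (and if that sub-arc of $L$ already self-intersects, we are done anyway). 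Thus $L$ is an infinite geodesic polygon confined to the compact set $D$. By \Cref{prop:leftmost-repeats} its edge types are periodic, so its vertices eventually run through $g^k h v$ for the principal word $g \in G$; confinement forces $g$ to be elliptic or trivial in $G$ (a hyperbolic or parabolic $g$ would send these vertices to infinity), and then $L$ is forced to cross itself, giving the required self-intersecting bounding path.

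The step I expect to be the main obstacle is guaranteeing that the self-intersection produced really is internal to the single path $L$, rather than a crossing between $L$ and a different branch of $T$. The minimal-disk choice is what rules out the escape scenario, so the crux is making that choice legitimate: establishing, via the Gauss--Bonnet computation together with the finiteness and sub-$\pi$ bound on the interior angles, that the minimum area is genuinely attained. The remaining delicate bookkeeping is orientational --- measuring angles clockwise and keeping $D$ on the correct side --- so as to be certain which of the leftmost and rightmost continuations is the one driven into $D$.
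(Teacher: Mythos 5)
The central gap is exactly at the step you flagged as the crux: attainment of the minimal-area disk, and the Gauss--Bonnet justification does not deliver it. Your linear-growth bound requires every interior angle of the polygon bounded by a self-intersecting arc to lie in a finite set of values strictly (and uniformly) below $\pi$, but neither part of that is available. First, the region bounded by such a loop is in general not convex, so its interior angles at non-convex corners are \emph{reflex}: at a tree-vertex where the unoriented angle is $\alpha$, the interior angle on the $D$-side may be $2\pi - \alpha > \pi$. With reflex angles permitted, $(k-2)\pi - \sum_i \theta_i$ gives no lower bound on area that grows with $k$; a thin spiral-shaped geodesic polygon has arbitrarily many edges, edge lengths bounded below, and arbitrarily small area. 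Second, even the non-reflex angles need not be strictly below $\pi$: \Cref{lem:collinear} only excludes collinear triples $\{v, xv, yv\}$ with $x \neq y^{-1}$, so if $v$ happens to lie on the axis of a hyperbolic generator $x$, then $\{x^{-1}v, v, xv\}$ is collinear and paths may run straight through arbitrarily many vertices, again decoupling edge count from area. Consequently infinitely many isometry types of loops can have area below any given threshold, the infimum need not be attained, and without a genuinely minimal disk your trapping argument has nothing to trap into (nor does iterating the escape step terminate, since areas can decrease strictly forever). This is precisely the difficulty the paper's proof is built to avoid: it never minimises area, but instead runs a descent in which the bounded area strictly decreases while the invariant $n(P)$ (the length of the middle portion in the decomposition rightmost $\oplus$ middle $\oplus$ leftmost) does not increase, and the contradiction comes from finiteness up to isometry of minimal self-intersecting paths with bounded $n(P)$, a rigidity statement about that decomposition rather than an area-versus-complexity bound.

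A secondary issue: your endgame asserts that a bounding path confined to a compact set with elliptic or trivial principal word ``is forced to cross itself.'' For trivial or finite-order principal words this is immediate, but for an elliptic element of infinite order it is a genuine theorem requiring the density-plus-Jordan-curve argument; it is exactly \Cref{lem:elliptic-has-intersection} of the paper, whose proof is independent of the present proposition, so it could be cited without circularity --- but as written the step is unproved. The parts of your argument that do work are worth keeping: the observation that the launched bounding path $L$, being non-backtracking in the tree $T$, meets the arc $[a,b]_T$ only at its initial vertex, so that any crossing of the images is a genuine self-intersection of $T$; and the use of \Cref{prop:leftmost-repeats} plus confinement to force the principal word to be elliptic or trivial. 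But both sit downstream of the unestablished minimal-area choice, so the proof as a whole does not go through.
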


\begin{figure}[t]
  \centering
  \begin{tikzpicture}
    \coordinate (r1) at (0, 0);
    \coordinate (r2) at (-2, 1);
    \coordinate (r3) at (-3, 2.5);
    \coordinate (r4) at (-3.5, 4.5);
    \coordinate (m1) at (-2, 5);
    \coordinate (m3) at (1.5, 5);
    \coordinate (l1) at (3, 4.5);
    \coordinate (l2) at (1, 3.5);
    \coordinate (l3) at (-0, 2);
    \coordinate (l4) at (-0.5, 0);
    
    \coordinate (s2) at (-1, 3.5);
    \coordinate (s3) at (1, 2);
    
    \draw[edge, red] (r1) -- (r2) -- (r3) --node[below left, black]{$e_i$} (r4);
    \draw[edge] (r4) -- (m1) -- (m3) -- (l1);
    \draw[edge, NiceGreen, dashed] (m1) -- (s2) -- (s3);
    \draw[edge, blue] (l1) -- (l2) -- (l3) -- (l4);
    \node[vertex, label=right:$v$] at (r1) {};
    \node[vertex, label=left:$g_n v$] at (l4) {};
    \node[vertex, label=left:$g_{i+1} v$] at (r4) {};
    \tkzInterLL(l2,l3)(s2,s3) \tkzGetPoint{a}
    \node[vertex, label=right:$\phi(a)$] at (a) {};
    \node[red] at (-2, 2.3) {$R$};
    \node[blue] at (1.5, 3) {$L$};
    \node at (-0.25, 5.5) {$M$};
    \node[NiceGreen] at (-0.5, 4) {$R'$};
  \end{tikzpicture}
  \caption{The construction of $P$ for \Cref{prop:path-impl-clock}}
  \label{fig:intersection}
\end{figure}

\begin{proof}
  Suppose for a contradiction that there is no self-intersecting bounding path on $T$.
  Let $w$ be a self-intersection point of $T$. Let $P$ be a minimal self-intersecting path (that is, a self-intersecting path such that no subpath is self-intersecting) with intersection point $w$. Such a $P$ must exist, since every path has finitely many subpaths. Then $P$ bounds a simply-connected subset $D$ of $\Hyp$.
  Let $(e_1, \dots, e_{n})$ be the sequence of edges in $P$.
  If some interior angle of $D$ is 0, then three consecutive vertices of $P$ are collinear so, by \Cref{lem:collinear}, there exists a self-intersecting bounding path on $T$. Hence we may assume that $D$ is a simple polygon: a closed piecewise-linear curve with no self-intersections. Depending on the orientation of the angles in $P$, either $\lmost(e_i)$ has an initial segment intersecting the interior of $D$ and not the exterior for every $1 \leq i < n$, or $\rmost(e_i)$ has this same property for all such $i$. We assume the former, as the argument in either case is similar.
  
  We use the following construction, as shown in \Cref{fig:intersection}:
  Let $n(P)$ be the smallest integer such that $P$ is the concatenation of paths $P = R \oplus M \oplus L$ where $R$ is a rightmost path, $L$ is a leftmost path, and $M$ is a path of length $n(P)$. Note that $n(P) \leq \abs{P} - 2$ where $\abs{P}$ is the number of edges in $P$, since a single edge is both a leftmost and rightmost path. Let $e_i$ denote the last edge of $R$. Let $L'$ be the minimal path containing the longest segment $[g_{i+1}, a]_T$ of $\lmost(e_i)$ whose image under $\phi$ is a subset of $D$. It follows that $\phi(a) \in D$. If $a$ intersects $R$, then $P' = R \oplus L'$ satisfies $n(P') = 0$. Otherwise, take $R'$ as the initial segment of the reverse of $L'$ such that the final vertex of $R'$ is in $P$. If $R'$ intersects $L'$, then we write $P' = R' \oplus M \oplus L''$, where $L''$ is an initial segment of $L$. If $R'$ intersects $M'$, then we write $P' = R' \oplus M' \oplus L''$, where $L''$ is a leftmost path and $M' \oplus L'' = M$. In either case, $n(P') \leq n(P)$, and $P'$ is a self-intersecting path bounding a region $D' \subset D$ of smaller area than $D$.
  
  Repeating this process, we obtain a sequence $P_1, P_2, \dots$ of minimal self-intersecting paths, bounding regions of decreasing area such that $n(P_{i+1}) \leq n(P_i)$ for all $i \in \N$. However there can only be finitely many minimal self-intersecting paths $P_i$ such that $n(P_i) \leq n(P)$, since $P_i = R_i \oplus M_i \oplus L_i$ is determined up to isometry by the edge types of $M_i$, together with the two edges adjacent to $M_i$. By contradiction, $T$ has a self-intersecting bounding path.
\end{proof}

\begin{definition}
  Let $[a, b]_T$ be a segment of a bounding path with principal word $g$. If $\mu(a, b) \leq [1, g]_T$, then $[a, b]_T$ is \emph{short}.
\end{definition}

Notice that if $g \in G$ is short and $P = [1, g]_T$ self-intersects, then $P$ is short. If $P$ is the minimal path containing some short segment, then the sequence of edge types of $P$ is either of the form $(x_1, \dots, x_i)$ or $(x_1, \dots, x_n, x_1)$, where the $x_i$ are distinct.

\begin{lemma}\label{lem:intersection0}
  Let $g \in \PSL_2(\R)$ be hyperbolic or parabolic. Let $\gamma \colon [0, 1] \to \Hyp$ be a simple curve with image $C$. If $C$ intersects $g^kC$ for some $k \in \N$, then $C$ intersects $gC$.
\end{lemma}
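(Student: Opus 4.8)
The plan is to reduce the statement to a translation acting on $\R^2$ and then to an intermediate-value argument. Since $g$ is hyperbolic or parabolic it has no fixed point in $\Hyp$ and lies in a one-parameter subgroup $(\tau^t)_{t\in\R}$ of isometries with $\tau^1=g$: the translations along $\Ax(g)$ if $g$ is hyperbolic, the parabolics fixing the same boundary point if $g$ is parabolic. The orbits of this flow are exactly the level curves of $g$, so each $\tau^t$, and in particular $g$, preserves every level curve and acts on it as a translation. I would introduce a continuous \emph{level} function $\lambda\colon\Hyp\to\R$ labelling the orbit through a point (for instance signed distance to $\Ax(g)$, respectively the logarithmic height in the upper half-plane model), which satisfies $\lambda\circ g=\lambda$, together with a continuous \emph{position} function $\rho\colon\Hyp\to\R$ measuring displacement along the orbits from a fixed transversal geodesic, normalised so that $\rho\circ\tau^t=\rho+t$. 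Then $(\rho,\lambda)$ identifies $\Hyp$ with $\R^2$ and $g$ with the unit horizontal translation.

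Next I would record the consequences of the hypothesis. Let $w\in C\cap g^kC$, put $p=w$ and $q=g^{-k}w\in C$. As $g$ preserves level curves, $p$ and $q$ lie on a common level curve and $\rho(p)-\rho(q)=k$. Let $C_0$ be the subarc of $C$ joining $q$ to $p$ (well defined since $C$ is simple), parametrised by $\gamma\colon[0,1]\to\Hyp$ with $\gamma(0)=q$ and $\gamma(1)=p$. Consider the coincidence locus $Z=\Set{(a,b)\in[0,1]^2\given \lambda(\gamma(a))=\lambda(\gamma(b))}$ and the continuous displacement $\delta(a,b)=\rho(\gamma(a))-\rho(\gamma(b))$. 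For $(a,b)\in Z$ the points $\gamma(a),\gamma(b)$ lie on a common orbit, so $\gamma(a)=\tau^{\delta(a,b)}\gamma(b)$; in particular $\delta(a,b)=1$ forces $\gamma(a)=g\gamma(b)$, that is, $\gamma(a)\in C_0\cap gC_0\subseteq C\cap gC$. Now $(1,0)\in Z$ with $\delta(1,0)=k$, while the diagonal of $[0,1]^2$ lies in $Z$ with $\delta\equiv 0$. Hence it suffices to join $(1,0)$ to the diagonal by a connected subset $\mathcal K\subseteq Z$: then $\delta(\mathcal K)$ is a connected subset of $\R$ containing $0$ and $k\geq 1$, so it contains $1$, and we are done. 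When $\rho\circ\gamma$ is monotone, $C_0$ is a graph over the position axis and this connectedness is elementary, the statement reducing to the horizontal chord theorem.

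The main obstacle is exactly this connectedness for a general simple arc, where $\rho\circ\gamma$ need not be monotone; this is where I expect to use that $C$ is simple together with the Jordan curve theorem. I would argue by contradiction: if $(1,0)$ and the diagonal lay in distinct components of the compact set $Z$, then, components of a compact metric space coinciding with quasicomponents, there is a set clopen in $Z$ separating them, and hence a Jordan curve $J$ in the square, disjoint from $Z$, enclosing $(1,0)$ but not the diagonal. On $J$ the continuous function $(a,b)\mapsto\lambda(\gamma(a))-\lambda(\gamma(b))$ is nowhere zero and so of constant sign; tracking this sign along the sides of the square through $(1,0)$ towards the diagonal corners $(0,0)$ and $(1,1)$, and using that $\gamma(0),\gamma(1)$ share a level curve, should yield incompatible sign constraints. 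An equivalent and perhaps cleaner formulation passes to the quotient annulus $\Hyp/\gen{g}$, in which $C_0$ projects to a loop of winding number $k$: an embedded loop in an annulus has winding number in $\{-1,0,1\}$, so for $k\geq 2$ the loop must cross itself, and I would aim to show by an innermost-bigon reduction that it has a self-crossing of index exactly $\pm 1$, which lifts to the required point of $C\cap gC$. Pinning down whichever separation argument is cleanest is the step I expect to require the most care.
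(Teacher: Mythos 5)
Your reduction of the lemma to a unit translation of $\R^2$ via the coordinates $(\rho,\lambda)$ is sound, and it parallels the role that level curves play in the paper's own proof; the gap is that neither of your proposed completions of the key step can be made to work. The sufficiency part of your first argument is fine, but the connectivity you need is simply false: the component of $(1,0)$ in $Z$ need not meet the diagonal. Concretely, take $g \colon z \mapsto z+1$ parabolic in the upper half-plane and $\gamma(t) = 2t + ie^{\sin 2\pi t}$, a simple arc with $\gamma(1) = g^2\gamma(0)$, so $k=2$. Here $\lambda(\gamma(t)) = \sin 2\pi t$ and $Z = \Set{(a,b) \given \sin 2\pi a = \sin 2\pi b}$, and $(1,0)$ is an \emph{isolated} point of $Z$: near $(1,0)$ the equation forces $a = b+1$ or $a+b \in \{1/2, 3/2\}$, and the only solution in the square is $(1,0)$ itself. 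So no connected subset of $Z$ joins $(1,0)$ to the diagonal, and your contradiction argument cannot succeed: a small circle about $(1,0)$ is a Jordan curve disjoint from $Z$ separating it from the diagonal, on which $\lambda(\gamma(a))-\lambda(\gamma(b))$ does have constant sign, with no incompatibility. (The lemma's conclusion still holds in this example, via the coincidence $(1/2,0)$, which lies on a \emph{different} component of $Z$.) Your annulus formulation has the same hole one level down: minimality of $k$ only excludes coincidences $\gamma(a) = g^j\gamma(b)$ with $1 \leq \abs{j} \leq k-1$, not with $\abs{j} \geq k$, so the projected loop need not be embedded and the fact that embedded loops have winding number in $\{-1,0,1\}$ does not apply; the assertion that a winding-number-$k$ loop coming from an embedded arc must have a self-crossing of index $\pm 1$ is the lemma itself restated, and an innermost-bigon reduction is not available for arbitrary simple arcs, whose mutual intersections can be non-transverse or wild.

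For contrast, the paper's proof avoids any connectivity claim about coincidence sets. It takes $k$ minimal, lets $L$ and $L'$ be the two extreme level curves met by $C$, chooses a subarc $D$ of $C$ crossing the closed strip $S$ between them and meeting $L \cup L'$ only at its endpoints $\gamma(x)$ and $\gamma(y)$, and observes that $g^{-1}\gamma(y)$ and $g^{k-1}\gamma(x)$ lie in the two different components of $S \setminus D$. Since $g^{-1}\gamma([y,1])$ and $g^{k-1}\gamma([0,x])$ stay in $S$ and share the endpoint $g^{-1}\gamma(1) = g^{k-1}\gamma(0)$, their union is a connected subset of $S$ joining the two components, hence it meets $D$; so $C$ meets $g^{-1}C$ or $g^{k-1}C$, contradicting minimality of $k$. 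If you want to salvage your framework, it is this kind of separation argument, applied to translates of the curve rather than to the coincidence set, that closes the gap; alternatively, the planar statement you reduced to is a classical chord theorem of Hopf, which holds even without simplicity.
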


\begin{figure}[t]
  \centering
  \begin{tikzpicture}
    \draw[edge, blue] (-6, -1.5) -- (5.5, -1.5) node[pos=0, left] {$L$};
    \draw[edge, blue] (-6, 1.5) -- (5.5, 1.5) node[pos=0, left] {$L'$};
    
    \node[vertex, label={left:$\gamma(0)$}] (a) at (-4, 0) {};
    \node[vertex, label={below:$\gamma(x)$}] (x) at (-3, -1.5) {};
    \node[vertex, label={above:$\gamma(y)$}] (y) at (0, 1.5) {};
    \node[vertex, label={right:$\gamma(1) = g^k\gamma(0)$}] (b) at (2, 0) {};
    
    \node[vertex, label={below:$g^k\gamma(x)$}] (xk) at (3, -1.5) {};
    
    \node[vertex, label={above:$g^{-1}\gamma(y)$}] (y-) at (-2, 1.5) {};
    \coordinate (b-) at (0, 0) {};
    \node[vertex, label={below:$g^{k-1}\gamma(x)$}] (xk-) at (1, -1.5) {};

    \draw[edge] (a) to[out=270, in=180] (x);
    \draw[edge, red] (x) to[out=0, in=180] node[right] {$D$} (y);
    \draw[edge] (y) to[out=0, in=180] (b);
    \draw[edge, dashed] (b) to[out=270, in=180] (xk);
    
    \draw[edge, dashed] (y-) to[out=0, in=180] (b-);
    \draw[edge, dashed] (b-) to[out=270, in=180] (xk-);
    
    \node at (0, 0.7) {$S^+$};
    \node at (-2, 0.7) {$S^-$};
    
  \end{tikzpicture}
  \caption{The construction in \Cref{lem:intersection0}}
  \label{fig:short-intersection}
\end{figure}

\begin{proof}
  We may assume $k$ is minimal. For contradiction, suppose $k > 1$. Also assume (by restricting and reparameterising $\gamma$ if necessary) that the images of $\gamma$ and $g^k\gamma$ intersect only at $\gamma(1) = g^k\gamma(0)$.
  
    We construct the diagram shown in \Cref{fig:short-intersection}. Let $L$ and $L'$ be level sets of $g$ intersecting $C$. We may assume that $L$ and $L'$ are chosen to maximise $d(L, L')$. Let $x, y \in [0, 1]$ be such that $D = \gamma([x, y])$ intersects $L$ and $L'$ only at $\gamma(x)$ and $\gamma(y)$ respectively. Assume (swapping $L$ and $L'$ if necessary) that $x \leq y$.
  
  Let $S$ be the closed strip bounded by $L$ and $L'$. Let $S^-$ and $S^+$ be the closure of the connected components of $S \setminus D$, so $g^{-1}\gamma(y) \in S^-$ and $g\gamma(x) \in S^+$. Then $g^{k-1}\gamma(x) \in S^+$, so $D$ intersects the curve $g^{-1}\gamma([y, 1]) \cap g^{k-1}\gamma([0, x])$. Hence $C$ intersects either $g^{-1}C$ or $g^{k-1}C$, so $C$ intersects $gC$, contradicting the minimality of $k$.
\end{proof}

\begin{lemma}\label{lem:minimal-loop}
  Let $P$ be a self-intersecting bounding path with principal word $g$. If $g$ is parabolic or hyperbolic, then $P$ has a short self-intersecting segment.
\end{lemma}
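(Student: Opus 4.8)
The plan is to recast $P$ as a periodic curve and then reduce a distant self-intersection to a nearby one using \Cref{lem:intersection0}. Recall that $P$ is isometric in $F_X$ to a path stabilised by its principal word, so after this normalisation the image $C = \phi(P)$ is invariant under $g$. Parametrising $C$ by arc length gives a map $\sigma \colon \R \to \Hyp$ with $\sigma(\R) = C$ for which $g$ acts as the shift $\sigma(t) \mapsto \sigma(t + \ell)$, where $\ell = \mu(1, g)$ is the length of one period $[1, g]_T$. In this language a self-intersection of $P$ is a pair $t \neq t'$ with $\sigma(t) = \sigma(t')$, and a short self-intersecting segment is exactly such a pair with $\abs{t - t'} \le \ell$. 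The degenerate collinear configurations are already accounted for by \Cref{lem:collinear}, so I may assume the relevant triples of vertices are in general position.

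First I would choose, among all self-intersections of $C$, one $\sigma(a) = \sigma(b)$ with $a < b$ minimising the gap $d = b - a$; such a minimum exists because $C$ self-intersects by hypothesis. Minimality forces the loop $J = \sigma([a, b])$ to be simple: any self-intersection with both parameters in $[a, b]$ and distinct from $(a, b)$ would have strictly smaller gap. The goal is then to prove $d \le \ell$. To bring in the periodicity, consider the single period $\gamma = \restr{\sigma}{[a, a + \ell]}$, which I may assume is simple, since a self-intersecting period arc is itself a short self-intersecting segment. Because $\sigma(a) = \sigma(b)$ and $\sigma$ is $g$-periodic, the point $\sigma(b)$ lies on $g^k\gamma$ for $k = \floor{d / \ell} \ge 1$, so the image $C_\gamma$ meets $g^k C_\gamma$.

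Now I would apply \Cref{lem:intersection0}: since $g$ is parabolic or hyperbolic and $\gamma$ is simple, the existence of an intersection of $C_\gamma$ with some $g^k C_\gamma$ yields an intersection of $C_\gamma$ with $g C_\gamma$, that is, a self-intersection of $C$ occurring between two \emph{consecutive} periods. Iterating this collapse reduces any distant self-intersection to one supported on two adjacent periods, and the plan is to combine this with the minimality of $d$ and the Jordan curve theorem applied to $J$ to conclude that the gap must in fact be at most a single period.

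The main obstacle is precisely the passage from ``two adjacent periods'' to ``within one period''. An intersection between $C_\gamma$ and $g C_\gamma$ only bounds the gap by $2\ell$, not $\ell$; and, worse, consecutive period arcs always share their joint vertex $\sigma(a + \ell)$, so the bare conclusion of \Cref{lem:intersection0} is satisfied vacuously and produces no genuine short intersection. Overcoming this needs two ingredients: feeding \Cref{lem:intersection0} an arc strictly shorter than one period, placed so that the witnessing intersection is non-degenerate (its translate shares no joint), which turns the output into an honest self-intersection of gap strictly less than $d$ whenever $k \ge 2$; and, for the residual range $\ell < d < 2\ell$, exploiting that $P$ is a \emph{bounding} path, so every turn has the same orientation and the simple loop $J$ is locally convex. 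I expect this last step --- re-running the strip and level-curve construction of \Cref{lem:intersection0} for the loop $J$ to extract a strictly smaller self-intersecting loop, contradicting the minimality of $d$ --- to be where the real difficulty lies.
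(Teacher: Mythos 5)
Your setup --- normalising $P$ to lie in the bi-infinite path stabilised by $g$, minimising a gap, and invoking \Cref{lem:intersection0} --- is the same general strategy as the paper's, but your proposal has a genuine gap, and it is exactly the one you name yourself: everything after ``The main obstacle'' describes what remains to be done rather than doing it. Because you minimise only over \emph{genuine} self-intersections $\sigma(t) = \sigma(t')$ and feed \Cref{lem:intersection0} a full period arc, the lemma's conclusion is indeed vacuous (consecutive period arcs always share the joint $\sigma(a+\ell)$), and your two proposed repairs --- a shorter, better-placed arc, and a separate convexity/Jordan-curve argument for the range $\ell < d < 2\ell$ --- are left entirely unproved. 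As written, the proposal does not establish the lemma.

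The missing idea, which is how the paper avoids this dead end, is to minimise over a \emph{larger} class of coincidences: pairs $a, b$ on the path with $\phi(b) = g^k\phi(a)$ for some integer $k \geq 1$, not only pairs with equal images. This widening does all the work. First, the minimal such pair has gap at most half a period: if $k \geq 2$ one compares with the complementary pair $(ga, b)$, which is again admissible with exponent $k-1$, forcing $\mu(a,b) \leq \mu(1,g) - \mu(a,b)$; and degenerate pairs such as $(a, ga)$ are never minimal, since any genuine self-intersection produces an admissible pair of gap strictly less than $\mu(1,g)$. Second, with $Q = [a,b]_T$ of length at most $\mu(1,g)/2$, the segments $Q$ and $gQ$ are \emph{disjoint} subsets of $T$, so the intersection of $\phi(Q)$ with $g\phi(Q)$ supplied by \Cref{lem:intersection0} (its hypothesis holds because $\phi(b) = g^k\phi(a)$ is an endpoint coincidence of $\phi(Q)$ with $g^k\phi(Q)$) is automatically an honest self-intersection of $T$, with no shared-joint degeneracy possible. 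Finally, the witnessing points $e, c \in Q$ with $\phi(c) = g\phi(e)$ themselves form an admissible pair, so minimality forces $\{e, c\} = \{a, b\}$, and since $g$ has no fixed point in $\Hyp$ this pins down $e = a$, $c = b$, $k = 1$; the genuine self-intersection is then $\phi(b) = \phi(ga)$ with $\mu(b, ga) = \mu(1,g) - \mu(a,b) \leq \mu(1,g)$, i.e.\ $[b, ga]_T$ is short. In particular the ``passage from two adjacent periods to within one period'' that you flag as the real difficulty is not handled by local convexity of the loop at all, but by playing the output of \Cref{lem:intersection0} off against minimality over this enlarged class of pairs.
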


\begin{proof}
  We may suppose via isometry that $P$ is a subpath of the bi-infinite bounding path stabilised by $g$. Let $a, b \in P$ be such that $\phi(b) = g^k\phi(a)$ for some positive integer $k$. Assume $a, b$ and $k$ are chosen such that $\mu(a, b)$ is minimal. Also assume, reversing $P$ if necessary, that $a \in [1, b]_T$. Note that $\mu(a, b) + \mu(b, ga) = \mu(1, g)$, so $\mu(a, b) \leq \mu(1, g)/2$. Let $Q = [a, b]_T$.
  It follows from \Cref{lem:intersection0} that $Q$ intersects $gQ$. Let $c \in Q, d \in Q \cap gQ$ be distinct such that $\phi(c) = \phi(d)$. Then $\mu(c, d) \leq \mu(a, b) + \mu(ga, b) \leq \mu(1, g)$, hence $[c, d]_T$ is short.
\end{proof}

The following lemmas relate intersections to distances between pairs of points in $\Hyp$. This guarantees the existence of a word whose displacement is shorter than the displacement of the generator it replaces.

\begin{lemma}\label{lem:metric-trick}
  Let $p, q, r, s \in \Hyp$ be such that $[p, q]$ intersects $[r, s]$ and $q \neq s$. Either $d(p, s) < d(p, q)$ or $d(r, q) < d(r, s)$.
\end{lemma}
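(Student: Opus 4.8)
The plan is to route the whole argument through a single intersection point together with the fact that geodesics in $\Hyp$ realise distances. First I would fix a point $t \in [p,q] \cap [r,s]$, which exists by hypothesis. Since $t$ lies on the geodesic $[p,q]$, distance is additive along it, so $d(p,q) = d(p,t) + d(t,q)$, and likewise $d(r,s) = d(r,t) + d(t,s)$. On the other hand, the triangle inequality gives $d(p,s) \le d(p,t) + d(t,s)$ and $d(r,q) \le d(r,t) + d(t,q)$. Adding these two inequalities and regrouping the right-hand side by means of the two additivity identities yields the key estimate
\[
  d(p,s) + d(r,q) \le d(p,q) + d(r,s).
\]

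The stated dichotomy then follows from a \emph{strict} version of this estimate. Indeed, if the displayed inequality is strict, then one cannot simultaneously have $d(p,s) \ge d(p,q)$ and $d(r,q) \ge d(r,s)$, since adding those would contradict strictness; hence at least one of $d(p,s) < d(p,q)$ or $d(r,q) < d(r,s)$ holds, which is exactly what we want. So the entire content of the lemma reduces to promoting the weak inequality above to a strict one.

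That promotion is the step I expect to be the main obstacle. Equality in the sum forces equality in \emph{both} triangle inequalities, and because geodesics in $\Hyp$ are unique, an equality $d(p,s) = d(p,t) + d(t,s)$ forces $t \in [p,s]$, and similarly $t \in [r,q]$. I would then feed these back together with the standing containments $t \in [p,q]$ and $t \in [r,s]$: uniqueness of the geodesic through two distinct points lets me conclude that $p, q, r, s$ are collinear and that $d(t,q) = d(t,s)$, so $t$ is the midpoint of $[q,s]$. Combined with $q \neq s$ this pins the configuration down completely and contradicts the fact that $[p,q]$ and $[r,s]$ genuinely cross at $t$. The delicate point is the degenerate configuration in which $t$ is a \emph{shared endpoint} of the two segments (for instance $t=p=r$): here one must use that the segments actually intersect transversally rather than merely touching, and it is precisely the hypothesis $q \neq s$ that rules out the coincidence $d(p,q)=d(p,s)$ that would otherwise defeat the conclusion. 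I expect the careful treatment of this endpoint case, rather than the triangle-inequality bookkeeping, to be where the real work lies.
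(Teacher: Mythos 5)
Your reduction to strictness of the summed inequality is the step that fails, and it fails irreparably: under the hypotheses of the lemma, $d(p,s)+d(r,q) \le d(p,q)+d(r,s)$ can be an \emph{equality}. Take all four points on a common geodesic, with arclength coordinates $p=0$, $r=1$, $q=3$, $s=4$. Then $[p,q]\cap[r,s]$ is the subsegment with coordinates $[1,3]$, so the segments intersect, and $q\neq s$; yet $d(p,s)+d(r,q)=4+2=6=3+3=d(p,q)+d(r,s)$. (The lemma's conclusion still holds here, via $d(r,q)=2<3=d(r,s)$, so this is not a counterexample to the lemma --- only to your claim that the lemma ``reduces'' to promoting the weak inequality to a strict one.) Your proposed derivation of a contradiction from equality breaks down in the same example: taking $t=2$, one indeed has $t\in[p,s]$ and $t\in[r,q]$ and all the points collinear, exactly as you deduce, but $d(t,q)=1\neq 2=d(t,s)$. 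The four containments plus collinearity simply do not force $t$ to be the midpoint of $[q,s]$, and there is no contradiction to extract; equality of the sum is perfectly compatible with the hypotheses.

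The repair is to abandon the summed inequality and trichotomize directly on the comparison of $d(t,q)$ with $d(t,s)$, which is what the paper does. If $d(t,s)<d(t,q)$, then $d(p,s)\le d(p,t)+d(t,s)<d(p,t)+d(t,q)=d(p,q)$; if $d(t,q)<d(t,s)$, then symmetrically $d(r,q)<d(r,s)$; and only in the remaining case $d(t,q)=d(t,s)$ does one need uniqueness of geodesics, to argue that $d(p,s)\le d(p,t)+d(t,s)$ must then be strict --- equality would place $q$ and $s$ on the same ray emanating from $t$ away from $p$ at equal distances from $t$, forcing $q=s$. Note that this last step requires $t\neq p$, and your handling of that degenerate situation is also incorrect: if the segments meet only at a common endpoint $t=p=r$, the hypothesis $q\neq s$ does \emph{not} rule out $d(p,q)=d(p,s)$ (two distinct points equidistant from $p$ in different directions do exactly this, and then both alternatives of the conclusion fail). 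That configuration has to be excluded by the choice of intersection point and labelling --- this is the role of the paper's ``without loss of generality $w\neq p$'' --- not derived from $q\neq s$ as you suggest.
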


\begin{proof}
  Suppose without loss of generality that $w \neq p$. Let $w \in [p, q] \cap [r, s]$.
  If $d(w, s) < d(w, q)$, then
  \[
    d(p, s) \leq d(p, w) + d(w, s) < d(p, w) + d(w, q) = d(p, q).
  \]
  If $d(w, q) < d(w, s)$, then 
  \[
    d(r, q) \leq d(r, w) + d(w, q) < d(r, w) + d(w, s) = d(r, s).
  \]
  If $d(w, s) = d(w, q)$, then
  \[
    d(p, s) < d(p, w) + d(w, s) = d(p, w) + d(w, q) = d(p, q).
  \]
  The strict inequality holds because otherwise $\{p, w, q, s\}$ is collinear, and both $q$ and $s$ lie on the side of $w$ opposite to $p$; this implies that $q = s$.
\end{proof}

\begin{lemma}\label{lem:metric-trick-2}
  Let $p, q, r, s \in \Hyp$ be such that $[p, q]$ intersects $[r, s]$ at some point $w$. If $q \neq s$ and $d(w, s) \leq d(w, q)$, then $d(p, s) < d(p, q)$.
\end{lemma}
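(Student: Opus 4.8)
The plan is to extract this from the proof of \Cref{lem:metric-trick}: the hypothesis $d(w,s) \le d(w,q)$ places us in exactly the two subcases of that argument that conclude $d(p,s) < d(p,q)$, so only those need be reproduced and, in the equality subcase, sharpened. As there, I would first record that $w$ lies on the geodesic $[p,q]$, whence $d(p,w) + d(w,q) = d(p,q)$, and I would assume $w \neq p$ (without this the claimed strict inequality can genuinely fail, e.g.\ when $w = p$ and $d(w,s) = d(w,q)$, so that $d(p,s) = d(p,q)$).

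In the strict subcase $d(w,s) < d(w,q)$ the triangle inequality immediately gives
\[
  d(p,s) \le d(p,w) + d(w,s) < d(p,w) + d(w,q) = d(p,q),
\]
and the first case is finished.

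In the equality subcase $d(w,s) = d(w,q)$ the same chain yields only $d(p,s) \le d(p,w) + d(w,s) = d(p,q)$, and the crux is to rule out equality. If $d(p,s) = d(p,q)$ held, then $d(p,s) = d(p,w) + d(w,s)$, forcing $w \in [p,s]$. Together with $w \in [p,q]$ this makes $\{p,w,q,s\}$ collinear, with both $q$ and $s$ lying on the geodesic ray from $p$ through $w$ on the far side of $w$ from $p$; since $d(w,q) = d(w,s)$, this forces $q = s$, contradicting $q \neq s$. Hence the triangle inequality is strict and $d(p,s) < d(p,q)$.

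I expect the equality subcase to be the only real obstacle: establishing strictness there is exactly the collinearity argument from the final paragraph of \Cref{lem:metric-trick}, and the degenerate configuration $w = p$ must be excluded (as it implicitly is in that lemma) for the strict conclusion to hold.
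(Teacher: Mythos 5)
Your proof is correct and takes essentially the same route as the paper, whose entire proof of this lemma is the remark that it follows by considering the cases $d(w,s) < d(w,q)$ and $d(w,s) = d(w,q)$ in the proof of \Cref{lem:metric-trick} --- exactly the two subcases you reproduce, with the same collinearity argument giving strictness in the equality case. Your observation that the degenerate configuration $w = p$ must be excluded (else $d(p,s) = d(p,q)$ is possible with $q \neq s$) is also accurate; the paper handles this only implicitly, via the ``without loss of generality $w \neq p$'' assumption made in the proof of \Cref{lem:metric-trick}.
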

\begin{proof}
  This follows by considering the cases $d(w, s) < d(w, q)$ and $d(w, s) = d(w, q)$ in the proof of \Cref{lem:metric-trick}.
\end{proof}

Suppose $x \in X^\pm$. If $g$ is a short word in $X$ such that $\abs{g} < \abs{x}$ and exactly one term in $g$ is either $x$ or $x^{-1}$, then $g$ is a \emph{good replacement} for $x$.

\begin{proposition}\label{prop:reduced-set-generates}
  If $g$ is a good replacement for $x \in X$, then $X' = X \setminus \{x\} \cup \{g\}$ generates $G$.
\end{proposition}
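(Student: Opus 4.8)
The plan is to establish the two inclusions $\gen{X'} \subseteq G$ and $G \subseteq \gen{X'}$ separately, with the second being the substantive one. The first is immediate: by definition a good replacement $g$ is a (short) word in $X$, so $g \in G$, and since $X \setminus \{x\} \subseteq X'$ as well, every element of $X'$ lies in $G$; hence $\gen{X'} \subseteq G$.

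For the reverse inclusion the key point is that the defining property of a good replacement lets us recover $x$ from $X'$. First I would unwind the definitions. Being short, $g$ is a nontrivial subword of a principal word $x_1 \cdots x_n$ of some bounding path. By the minimality clause of \Cref{prop:leftmost-repeats} the letters $x_1, \dots, x_n$ are pairwise distinct in $X^\pm$, so $g$, as a subword, contains each of $x$ and $x^{-1}$ at most once. Combined with the hypothesis that exactly one term of $g$ lies in $\{x, x^{-1}\}$, this shows that $g$ has a \emph{unique} occurrence of $x^{\pm 1}$ among its letters.

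With this in hand I would write $g = u\,x^{\varepsilon}\,w$, where $\varepsilon \in \{1, -1\}$ marks the single occurrence of $x^{\pm1}$ and $u, w$ are words in $(X \setminus \{x\})^\pm$. Solving for $x$ gives $x^{\varepsilon} = u^{-1}\,g\,w^{-1}$. Since $u$ and $w$ involve only letters from $X \setminus \{x\} \subseteq X'$, and $g \in X'$, the right-hand side lies in $\gen{X'}$; thus $x \in \gen{X'}$. Together with $X \setminus \{x\} \subseteq X'$ this yields $X \subseteq \gen{X'}$, whence $G = \gen{X} \subseteq \gen{X'}$, completing the proof.

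This is a standard Nielsen-transformation argument, so I do not expect a genuine obstacle. The only point requiring care is the bookkeeping that isolates the single occurrence of $x^{\pm1}$ and confirms that the remaining letters avoid $x$; this is exactly what the ``exactly one term'' clause and the distinctness of the letters in a principal word together guarantee.
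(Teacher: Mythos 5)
Your proof is correct and follows essentially the same route as the paper's: isolate the unique occurrence of $x^{\pm 1}$ in $g$ and solve for it, giving $x^{\varepsilon} = u^{-1}g\,w^{-1} \in \gen{X'}$. The only difference is cosmetic — the paper takes the ``exactly one term'' clause of the definition of a good replacement directly, so your detour through \Cref{prop:leftmost-repeats} is unnecessary (though harmless).
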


\begin{proof}
  Write $g$ as a word $x_1 \dots x_n$ in $X$. Let $i$ satisfy $x_i \in \{x, x^{-1}\}$. It suffices to show that $x_i \in \gen{X'}$. Indeed, $x_i = (x_{1}\dots x_{i-1})^{-1}g(x_{i+1}\dots x_n)^{-1}$.
\end{proof}

Our goal is to find a generating set of $G$ with ``sufficiently nice'' properties that we can gain information about the structure of the group. We make this precise with the following definition.

\begin{definition}\label{def:reduced}
  $X$ is \emph{reduced} if the following hold:
  \begin{enumerate}
    \item[A1.]
    $X \cap X^- = \varnothing$.
    \item[A2.]
    No short word in $X$ is elliptic.
    \item[A3.]
    No short word $g$ in $X$ is a good replacement for a term occurring in $g$. 
  \end{enumerate}
\end{definition}

If $G$ is discrete and torsion-free, then there is one case in which we cannot eliminate all self-intersections in $T$: where $G$ is not free, or equivalently the action of $G$ on $\Hyp$ is cocompact. As shown below, we can detect if this occurs.

\begin{lemma}\label{lem:half-loop-reduces}
  Suppose $S$ is a short self-intersecting segment of a leftmost path on $T$. Either $S$ is the boundary of a compact fundamental domain for $G$ and $X$ is a minimal generating set for $G$, or $X$ is not reduced.
\end{lemma}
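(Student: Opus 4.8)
The plan is to prove the contrapositive of the dichotomy: I assume $X$ is reduced, so that conditions A1--A3 all hold, and deduce that $\phi(S)$ bounds a compact fundamental domain for $G$ and that $X$ is minimal. First I replace $S$ by a minimal self-intersecting subsegment: choosing $c \neq d$ in $S$ with $\phi(c) = \phi(d) =: w$ so that no proper subsegment self-intersects, the image of the subsegment from $c$ to $d$ is a simple closed curve $\ell$. Since $S$ is short, the observation preceding \Cref{lem:intersection0} shows that the edge types along this subsegment are distinct (save possibly for a repetition of the first), so $\ell$ is a simple geodesic polygon of finite length; by the Jordan curve theorem it bounds a region $D$ of finite area.

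The crux is to determine the nature of the coincidence at $w$, and here reducedness does the work. Suppose first that $w$ lies in the interiors of two edges $e_p$ and $e_q$ of the subsegment, so that their images cross transversally. Orienting the crossing using the fact that the subsegment lies on a leftmost path, I apply \Cref{lem:metric-trick-2} to the two crossing geodesics: choosing the endpoint nearer to $w$ and translating the path by an isometry of $T$ so that the corresponding edge emanates from the base vertex $1$ (where $\phi(1) = v$, so that edge has length equal to the displacement of its type), the lemma yields a strictly shorter geodesic between vertex images. This produces a short word whose displacement is strictly smaller than that of the edge's type and which contains exactly one occurrence of that generator, i.e.\ a good replacement, contradicting A3. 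The tangential sub-case, and the case where a vertex of $S$ lies in the interior of an edge, are collinear configurations: these are dealt with by \Cref{lem:metric-trick-2} and \Cref{lem:collinear}, again yielding a good replacement and contradicting A3. Hence $w$ is a coincidence of two vertices $g_j v = g_k v$.

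In this surviving configuration $h := g_j^{-1} g_k$ is a short word fixing $v$. A nontrivial element of $\PSL_2(\R)$ fixing a point of $\Hyp$ is elliptic, so A2 forces $h = 1$ in $G$; thus $\ell$ is a relator and descends to an embedded loop in $\Cay(G, X)$. Because the subsegment lies on a leftmost path, at each vertex it follows the edge subtending the minimal interior angle, so no edge of $\phi(\Cay(G,X))$ enters the interior of $D$: the curve $\ell$ is the boundary of a single face. The group $G$ acts on $\phi(\Cay(G,X))$ by isometries and simply transitively on its vertices, so the translates $\Set{aD \given a \in G}$ tile $\Hyp$ with pairwise disjoint interiors; hence $D$ is a fundamental domain, compact since $\ell$ is a bounded polygon, and the edge types read along $\ell$ give its side-pairing. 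In the torsion-free setting at hand the quotient is a closed surface, so this is the cocompact case anticipated before the lemma.

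Finally, minimality of $X$ follows by a counting argument: the $G$-orbits of vertices, edges and faces of the embedded Cayley graph number $1$, $\abs{X}$ and $1$ respectively, so the Euler characteristic of the quotient surface $\Sigma = \Hyp/G$ satisfies $1 - \abs{X} + 1 = \chi(\Sigma)$; since $G \cong \pi_1(\Sigma)$ has rank $2 - \chi(\Sigma) = \abs{X}$, no proper subset of $X$ generates $G$. I expect the main obstacle to be the transversal-crossing case of the second paragraph: extracting from \Cref{lem:metric-trick-2} a word that is simultaneously short, of strictly smaller displacement, and overlapping the replaced generator in exactly one letter requires choosing the correct translate, using the leftmost orientation to pin down which endpoint the lemma applies to, and separately treating the subwords in which a generator and its inverse both occur before invoking A3.
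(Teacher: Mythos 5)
Your skeleton matches the paper's: argue the contrapositive, pass to a minimal self-intersecting subsegment, and split on whether the coincidence point is an edge--edge crossing or a coincidence of vertices. But both halves of your case analysis have gaps exactly where the real work lies. In the crossing case you invoke \Cref{lem:metric-trick-2} after ``choosing the endpoint nearer to $w$'', but you have no way to certify which endpoint is nearer, and the replacement word that this choice forces on you need not contain exactly one occurrence of the generator being replaced: if, say, $x_1^{-1}$ occurs among the intermediate edge types, the candidate word $g_n$ contains both $x_1$ and $x_1^{-1}$ and is not a good replacement for $x_1$. The paper resolves this by applying the disjunctive \Cref{lem:metric-trick} to the two crossing edges, which yields one of two candidate replacements, and then running a four-way case analysis (\Cref{fig:replacements}) on whether $x_1^{-1}$ and $x_n^{-1}$ occur among the middle types, pairing the candidates so that \emph{both} horns of the disjunction are genuine good replacements in every case; the sub-case $x_1=x_n$ needs a separate estimate $d(v,w)\geq d(g_{n-1}v,w)$ (using shortness) before \Cref{lem:metric-trick-2} may be applied. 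You name precisely this as ``the main obstacle'' and leave it unresolved, but it constitutes the bulk of the paper's proof, not a routine verification. A smaller point: \Cref{lem:collinear} does not produce a good replacement (its conclusion is a self-intersecting bounding path); the zero-angle configurations are handled by the direct observation that $x_{i+1}x_i^{-1}$ is a good replacement.

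In the vertex-coincidence case, your claim that leftmostness alone gives ``no edge of $\phi(\Cay(G,X))$ enters the interior of $D$'', and hence the tiling, is unjustified. Leftmostness only controls edges incident to the boundary vertices of $D$; edges crossing $\ell$ cannot be ruled out by appealing to absence of self-intersections, because in the cocompact case $T$ necessarily has many self-intersections. More importantly, nothing in your argument prevents some generator from failing to occur in the relator at all, in which case the translates of $D$ do not cover $\Hyp$, $D$ is not a fundamental domain, and your face count (hence the Euler-characteristic computation) is wrong. The paper closes exactly this hole with a sub-dichotomy: if some $x_i^{-1}$ does not occur in the relator $g_n$, then $g_n$ itself, having displacement $0$, is a good replacement for $x_i$ and A3 fails; otherwise the leftmost condition forces the successive edge types to advance through the full cyclic order on $X^\pm$, so every element of $X^\pm$ occurs exactly once in $g_n$. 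Only then is $D$ a Poincaré fundamental polygon whose side-pairings are all of $X$, which is what yields both the tiling and minimality. Your Euler-characteristic argument would be an acceptable substitute for the paper's citation of the Poincaré Polygon Theorem, but only after that missing step is supplied.
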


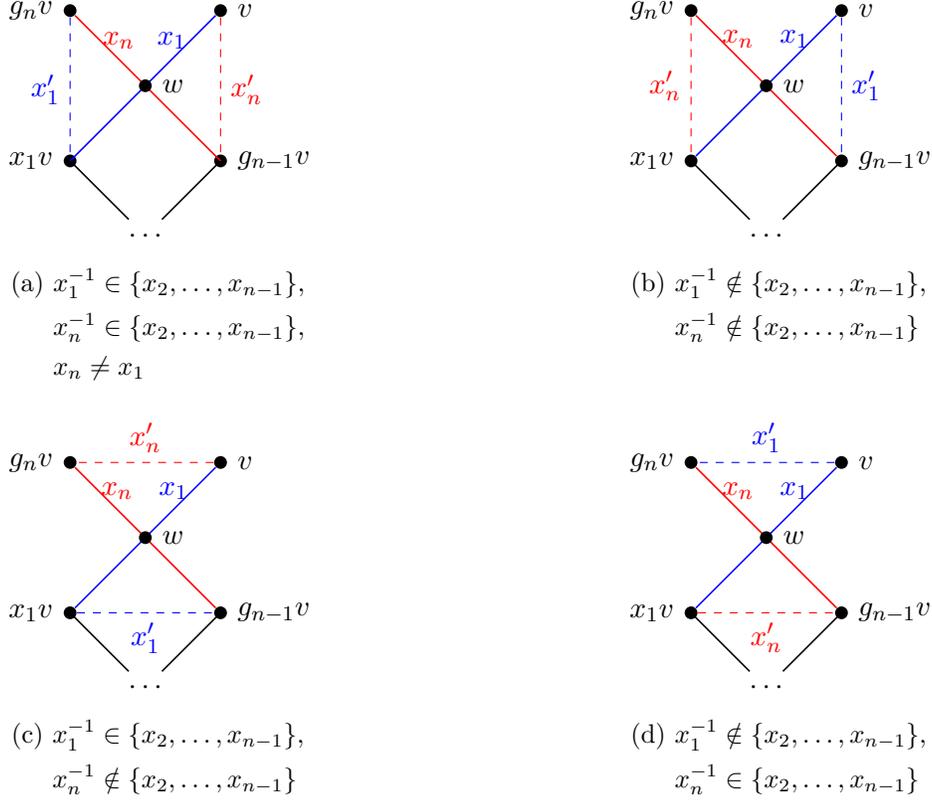
\begin{figure}[t]
  \centering
  \begin{subfigure}[t]{0.5\textwidth}
    \centering
    \begin{tikzpicture}
      \node[vertex, label={left:$g_nv$}] (gv) at (-1, 0) {};
      \node[vertex, label={right:$v$}] (v) at (1, 0) {};
      \node[vertex, label={left:$x_1v$}] (a) at (-1, -2) {};
      \node[vertex, label={right:$g_{n-1}v$}] (b) at (1, -2) {};
      \draw[edge, red] (gv) -- (b.center) node[pos=0.3, above] {$x_n$};
      \draw[edge, blue] (v) -- (a.center) node[pos=0.3, above] {$x_1$};
      \node (m) at (0, -3) {$\,\cdots$};
      \draw[edge] (b.center) -- (m) -- (a.center);
      \draw[dashed, blue] (gv) -- (a.center) node[midway, left] {$x_1'$};
      \draw[dashed, red] (v) -- (b.center) node[midway, right] {$x_n'$};
      \node[vertex, label={right:$w$}] (b) at (0, -1) {};
    \end{tikzpicture}
    \caption{
      $
      \begin{aligned}[t]
        &x_1^{-1} \in \{x_2, \dots, x_{n-1}\}, \\
        &x_n^{-1} \in \{x_2, \dots, x_{n-1}\}, \\
        &x_n \neq x_1
      \end{aligned}
      $
    }
  \end{subfigure}%
  \begin{subfigure}[t]{0.5\textwidth}
    \centering
    \begin{tikzpicture}
      \node[vertex, label={left:$g_nv$}] (gv) at (-1, 0) {};
      \node[vertex, label={right:$v$}] (v) at (1, 0) {};
      \node[vertex, label={left:$x_1v$}] (a) at (-1, -2) {};
      \node[vertex, label={right:$g_{n-1}v$}] (b) at (1, -2) {};
      \draw[edge, red] (gv) -- (b) node[pos=0.3, above] {$x_n$};
      \draw[edge, blue] (v) -- (a) node[pos=0.3, above] {$x_1$};
      \node (m) at (0, -3) {$\,\cdots$};
      \draw[edge] (b.center) -- (m) -- (a.center);
      \draw[dashed, red] (gv) -- (a) node[midway, left] {$x_n'$};
      \draw[dashed, blue] (v) -- (b) node[midway, right] {$x_1'$};
      \node[vertex, label={right:$w$}] (b) at (0, -1) {};
    \end{tikzpicture}
    \caption{
      $
      \begin{aligned}[t]
        &x_1^{-1} \notin \{x_2, \dots, x_{n-1}\}, \\
        &x_n^{-1} \notin \{x_2, \dots, x_{n-1}\}
      \end{aligned}
      $
    }
  \end{subfigure}%
  \par\bigskip
  \begin{subfigure}[t]{0.5\textwidth}
    \centering
    \begin{tikzpicture}
      \node[vertex, label={left:$g_nv$}] (gv) at (-1, 0) {};
      \node[vertex, label={right:$v$}] (v) at (1, 0) {};
      \node[vertex, label={left:$x_1v$}] (a) at (-1, -2) {};
      \node[vertex, label={right:$g_{n-1}v$}] (b) at (1, -2) {};
      \draw[edge, red] (gv) -- (b) node[pos=0.3, above] {$x_n$};
      \draw[edge, blue] (v) -- (a) node[pos=0.3, above] {$x_1$};
      \node (m) at (0, -3) {$\,\cdots$};
      \draw[edge] (b.center) -- (m) -- (a.center);
      \draw[dashed, blue] (a) -- (b) node[midway, below] {$x_1'$};
      \draw[dashed, red] (v) -- (gv) node[midway, above] {$x_n'$};
      \node[vertex, label={right:$w$}] (b) at (0, -1) {};
    \end{tikzpicture}
    \caption{
      $
      \begin{aligned}[t]
        &x_1^{-1} \in \{x_2, \dots, x_{n-1}\}, \\
        &x_n^{-1} \notin \{x_2, \dots, x_{n-1}\}
      \end{aligned}
      $
    }
  \end{subfigure}%
  \begin{subfigure}[t]{0.5\textwidth}
    \centering
    \begin{tikzpicture}
      \node[vertex, label={left:$g_nv$}] (gv) at (-1, 0) {};
      \node[vertex, label={right:$v$}] (v) at (1, 0) {};
      \node[vertex, label={left:$x_1v$}] (a) at (-1, -2) {};
      \node[vertex, label={right:$g_{n-1}v$}] (b) at (1, -2) {};
      \draw[edge, red] (gv) -- (b) node[pos=0.3, above] {$x_n$};
      \draw[edge, blue] (v) -- (a) node[pos=0.3, above] {$x_1$};
      \node (m) at (0, -3) {$\,\cdots$};
      \draw[edge] (b.center) -- (m) -- (a.center);
      \draw[dashed, red] (a) -- (b) node[midway, below] {$x_n'$};
      \draw[dashed, blue] (v) -- (gv) node[midway, above] {$x_1'$};
      \node[vertex, label={right:$w$}] (b) at (0, -1) {};
    \end{tikzpicture}
    \caption{
      $
      \begin{aligned}[t]
        &x_1^{-1} \notin \{x_2, \dots, x_{n-1}\}, \\
        &x_n^{-1} \in \{x_2, \dots, x_{n-1}\}
      \end{aligned}
      $
    }
  \end{subfigure}%
  \caption{Possible replacements of $x_i$ by $x_i'$, depending on the types of edges in $P$}
  \label{fig:replacements}
\end{figure}

\begin{proof}
  Suppose $X$ satisfies A1 and A2. Let $P$ be the minimal path containing $S$ (so that no subpath of $P$ contains $S$). Let $(g_0, \dots, g_n)$ be the vertices of $P$, and let $(x_1, \dots, x_{n-1})$ be the edge types. We may suppose $g_0 = 1$. If  some angle of $P$ is 0, then $x_i^{-1}v \in [v, x_{i+1}v]$ for some $i$, so $x_{i+1}x_i^{-1}$ is a good replacement for $x_{i+1}$. We thus assume otherwise.
  
  Note that each pair of edges of $P$ has at most one intersection.
  Hence $S$ has finitely many self-intersections, so we may assume $S$ is minimal, and in that case $S$ self-intersects only at its endpoints. Let $w \in \Hyp$ be the self-intersection point of $S$.
  
  First suppose $v = g_n v$, so $S = P$. In this case, the composition of $S$ with $\phi$ is a simple polygon. If there is some $i$ such that $x_i^{-1}$ does not occur in $g_n$, then $d(v, g_n v) = 0 < d(v, x_i v)$. Hence $g_n$ is a good replacement for $x_i$, so $X$ is not reduced. Suppose that $x_i^{-1}$ occurs in $g_n$ for all $i$. We show that every element of $X^\pm$ occurs once in $g_n$. Let $(y_1, \dots, y_{k})$ be the ordering of $X^\pm$ by $\measuredangle_v(y_1v, y_iv)$, such that $y_1 = x_1$. Because $P$ is leftmost, $y_{k+1}$ directly follows $(y_k)^{-1}$ in $g_n$ for each $k$. By induction, every element of $X^\pm$ occurs in $g_n$. By minimality, there are no self-intersecting subpaths of $P$. Thus the cosets of the region bounded by $S$ tile $\Hyp$. It follows from the Poincaré Polygon Theorem \cite[\textsection 7.4]{stillwell} that $X$ is a minimal generating set for $G$.
  
  Suppose now that $v \neq g_n v$. Note that $v$, $x_1 v$, $g_n v$, and $g_{n-1} v$ are all distinct, due to the minimality of $P$. First suppose $x_1 \neq x_n$. We replace some $x_j$ with $x_j'$, where $j \in \{1, n\}$ and
  \[
    x_j' \in \{g_n, g_{n-1}, x_1^{-1}g_n, x_1^{-1}g_{n-1}\}.
  \]
  \Cref{fig:replacements} shows the possible replacements depending on different conditions. In each case, the reduced word in $X$ representing $x_j'$ has exactly one term in $\{x_j, x_j^{-1}\}$. Additionally, \Cref{lem:metric-trick} shows that $\abs{x_j'} < \abs{x_j}$ for some $j \in \{1, n\}$. Thus $X$ violates A3 and is not reduced. Now suppose $x_1 = x_n$. Let $a$ and $b$ be the endpoints of $P$, and let $w = \phi(a) = \phi(b)$. If $x_1^{-1}$ does not occur in $g_n$, then case (b) of \Cref{fig:replacements} applies. Otherwise,
  \begin{align*}
    d(v, w) &= \mu(v, g_{n-1}v) - \mu(a, g_{n-1}v) \\
    &\geq \mu(a, b) - \mu(a, g_{n-1}v) \\
    &= d(g_{n-1}v, w).
  \end{align*}
  
  By \Cref{lem:metric-trick-2}, $d(x_1 v, g_{n-1}v) < d(v, x_1v)$, so $x_1^{-1}g_{n-1}$ is a good replacement for $x_1$.
\end{proof}

Using these lemmas we prove a key result.

\begin{proposition}\label{prop:reduced-is-discrete-free}
  If $X$ is reduced, then $G$ is torsion-free and $X$ is a minimal generating set for $G$.
\end{proposition}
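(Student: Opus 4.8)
The plan is to split on whether the embedding $\phi \colon T \to \Hyp$ has a self-intersection, and to show in each case that $G$ is torsion-free with $X$ minimal. First I would treat the case where $T$ has no self-intersection, so that $\phi$ is injective. Here I claim the canonical surjection $F_X \twoheadrightarrow G$ is injective: if a nontrivial reduced word $w \in F_X$ were trivial in $G$, then the distinct vertices $1$ and $w$ of $T$ would both map to $v$ under $\phi$, contradicting injectivity. Thus $G$ is free on $X$, hence torsion-free, and $X$ is a free basis and in particular a minimal generating set.

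Next I would treat the case where $T$ has a self-intersection. By \Cref{prop:path-impl-clock} there is a self-intersecting bounding path $P$; let $g$ be its principal word. Since $g$ is a subword of a principal word it is a short word, so A2 forbids $g$ from being elliptic. I then aim to produce a short self-intersecting segment lying on a leftmost path: if $g$ is trivial in $G$, then $[1,g]_T$ is already such a segment (reversing $P$ if it is rightmost rather than leftmost), while if $g$ is parabolic or hyperbolic, \Cref{lem:minimal-loop} produces one. Feeding this segment $S$ into \Cref{lem:half-loop-reduces} and using that $X$ is reduced, so that the ``not reduced'' alternative is excluded, I conclude that $S$ bounds a compact fundamental domain $D$ for $G$ and that $X$ is minimal.

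It then remains to deduce torsion-freeness in this cocompact situation. Since the $G$-translates of the compact region $D$ tile $\Hyp$ with side-pairings drawn from $X$, the action is properly discontinuous and, by the Poincar\'e Polygon Theorem, the vertex cycles of $D$ govern the stabilizers of the tiling's vertices. Each vertex-cycle transformation is a product of edge-type generators traversing part of the boundary cycle, hence a short word; were such a stabilizer nontrivial it would be generated by an elliptic element, contradicting A2. Hence all vertex stabilizers are trivial, $G$ acts freely on $\Hyp$, and $G$ is torsion-free.

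The hard part will be this final torsion-free conclusion: I must verify that the vertex-cycle transformations arising from the tiling are genuinely short words, so that A2 applies, and that the vanishing of elliptic vertex cycles is precisely what forces the action to be free. Confirming that the principal word really counts as a short word (so that A2 rules out the elliptic principal-word case), and cleanly handling the degenerate case $g = 1$, are the remaining points that need care; by contrast, minimality is essentially handed over by \Cref{lem:half-loop-reduces} in the cocompact case and is immediate in the free case.
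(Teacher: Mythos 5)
Your proposal is correct and follows essentially the same route as the paper: the same three lemmas (\Cref{prop:path-impl-clock}, \Cref{lem:minimal-loop}, \Cref{lem:half-loop-reduces}) drive the same dichotomy between the free case ($\phi$ injective, so $\Cay(G,X)$ is a tree and $G$ is free on $X$) and the cocompact case delivered by \Cref{lem:half-loop-reduces}. The only difference is that you spell out two points the paper leaves implicit --- the case where the principal word is trivial in $G$, and torsion-freeness in the cocompact case, which the paper dispatches with ``the result follows'' --- and your Poincar\'e/vertex-cycle argument via A2 is precisely the justification implicit in the tiling construction inside the proof of \Cref{lem:half-loop-reduces}.
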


\begin{proof}
  Suppose $X$ is reduced. If the action of $G$ on $\Hyp$ is cocompact and $X$ is a minimal generating set for $G$, then the result follows. Otherwise, by \Cref{lem:half-loop-reduces}, $T$ has no short self-intersecting segment. By \Cref{lem:minimal-loop} and \Cref{prop:path-impl-clock}, $T$ has no self-intersections, so $\phi$ is injective. Thus $\Cay(G, X)$ is a tree, so $G$ is free with basis $X$ (see \cite[I.3.2, Proposition 15]{serre}).
\end{proof}

The next key result is \Cref{cor:reduced-is-discrete}. To prove this, we determine conditions under which $T$ has a self-intersection.

Recall that a word $g$ in $X$ is \emph{cyclically reduced} if its length is at most the reduced word length of $xgx^{-1}$ for all $x \in X^\pm$ \cite[p.\ 9]{lyndon-schupp}.

\begin{lemma}\label{lem:elliptic-has-intersection}
  If a word $g$ in $X$ is elliptic, then $T$ has a self-intersection. If $g$ is cyclically reduced, then $\Set{g^k \given k \in \Z_{\geq 0}}$ is contained in a self-intersecting path on $T$.
\end{lemma}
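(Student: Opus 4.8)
The two assertions are linked by conjugacy, so the plan is to prove the cyclically reduced statement first and deduce the general one from it. If $g$ is an arbitrary elliptic word, write $g = h g' h^{-1}$ with $g' \in F_X$ cyclically reduced; since conjugation in $G$ preserves the elliptic/parabolic/hyperbolic classification, $g'$ is again elliptic, and a self-intersection of the path through the powers of $g'$ is in particular a self-intersection of $T$. Hence it suffices to treat cyclically reduced $g$. Because $g$ is cyclically reduced, every power $g^k$ is a reduced word, so $(g^k)_{k \in \Z}$ spans an embedded bi-infinite path $P$ on $T$ (the edges spell $g$ with no backtracking at the junctions); $P$ contains $\Set{g^k \given k \geq 0}$, and it suffices to exhibit a self-intersection of $P$.

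Let $p$ be the fixed point of $g$ and $\theta \in (0,2\pi)$ its rotation angle. I would first dispose of two easy cases. If $g$ has finite order $m$, then $g^m = 1$ in $G$ while $g^m \neq 1$ in $F_X$, so the distinct vertices $1$ and $g^m$ of $P$ satisfy $\phi(1) = v = g^m v = \phi(g^m)$, an immediate self-intersection. If $g$ has infinite order but $p \in \phi(P)$, say $\phi(a) = p$ with $a \in P$, then $\phi(g^k a) = g^k p = p$ for all $k$, and the points $g^k a$ are pairwise distinct since $F_X$ acts freely on $T$; this gives infinitely many preimages of $p$.

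The substantive case is $g$ of infinite order with $p \notin \phi(P)$. Write $R^* = \max_j d(p, u_j v)$ for the vertices $u_j$ of one period of $P$ and choose a vertex $w^*$ attaining this maximum. Then the orbit $\Set{g^k w^* \given k \in \Z}$ lies on the outer circle $C_{R^*} = \partial B(p,R^*)$ at angles $\alpha + k\theta$, while convexity of hyperbolic balls forces the whole curve $\phi(P)$ into $\overline{B(p,R^*)}$. The one-period arc $C_0 = \phi([w^*, g w^*]_T)$ is then a crosscut of this disk joining two boundary points. Reversing $P$ (equivalently, replacing $g$ by $g^{-1}$; the point set is unchanged) if necessary, I may assume $\theta \in (0,\pi)$, using that $\theta/\pi$ is irrational. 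Let $K \geq 2$ be least with $g^K w^*$ lying strictly inside the boundary arc of $C_{R^*}$ of angular length $\theta$ spanned by $C_0$; density of the orbit guarantees $K$ exists, and since $\theta < \pi$ one checks $K \geq 3$, so $g^2 w^*$ lies on the complementary boundary arc. I would then apply the Jordan curve theorem to $C_0$ together with a boundary arc of $C_{R^*}$: the sub-path of $P$ from $g w^*$ to $g^K w^*$ passes through $g^2 w^*$ on one side of the crosscut $C_0$ but terminates at $g^K w^*$ strictly on the opposite side, and since $\phi(P) \subseteq \overline{B(p,R^*)}$ it cannot escape across $C_{R^*}$; hence it must cross $C_0$. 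Because the two sub-paths of $P$ meet in $T$ only at the vertex $g w^*$, this crossing is a genuine self-intersection.

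The hard part is this final Jordan-curve step, which is delicate for two reasons. First, the winding direction must be controlled: without the reduction to $\theta \in (0,\pi)$ the first return $g^K w^*$ into the spanned arc can occur already at $K = 2$, leaving no intermediate orbit vertex on the far arc to anchor the connectivity argument. Second, $C_0$ is a genuine crosscut only when $R^*$ is attained by a single vertex per period; if the maximum is attained several times, $C_0$ meets $C_{R^*}$ at interior points, and I would need to argue that one of the resulting sub-crosscuts still separates $g^2 w^*$ from $g^K w^*$. I expect both issues to be resolvable by bookkeeping, with \Cref{lem:collinear} absorbing the degenerate sub-case in which three of the relevant points become collinear, but verifying the separation cleanly is where the real work lies.
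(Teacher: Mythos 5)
Your route is genuinely different from the paper's, and in the non-degenerate case (the maximum $R^*$ attained by a single vertex per period) it does work, modulo one small repair: $Q$ and $C_0$ always share the endpoint $\phi(gw^*)$, so ``$Q$ must cross $C_0$'' is vacuous as literally stated; you need to run the connectivity argument on the image of the half-open segment $[gw^*, g^Kw^*]_T \setminus \{gw^*\}$, which still meets both open boundary arcs (at $\phi(g^2w^*)$ and $\phi(g^Kw^*)$) and is disjoint in $T$ from $[w^*, gw^*]_T$, so that a common image point yields distinct preimages. The genuine gap is the one you flag yourself: when several vertices per period attain $R^*$, the arc $C_0$ has interior tangencies with $C_{R^*}$ and is not a crosscut. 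What your argument then needs is a separation lemma: a simple arc in the closed disc, with endpoints on the boundary circle and finitely many interior tangencies, still separates points of the two complementary boundary arcs (from which one can, if desired, deduce your sub-crosscut version). This statement is true, but it is not bookkeeping --- it requires a further Jordan-curve argument of its own (for instance, join a point of each arc by a path in $\overline{B(p,R^*)}$ avoiding $C_0$, close it up through the exterior of the disc, and contradict the connectedness of $C_0$, whose endpoints the resulting Jordan curve separates). Without it, neither the claim that $\phi(g^2w^*)$ and $\phi(g^Kw^*)$ lie in different components nor the anchoring role of $g^2w^*$ is established, so the proof is incomplete exactly where the statement is hardest.

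It is worth seeing how the paper sidesteps both of your ``delicate'' issues at once, because the difference is in the choice of circle and of arc. First, instead of the outermost circle, the paper uses the level circle $C$ of $g$ through $v$ itself and takes the \emph{first-return} segment $A = [1,a]_T$: the minimal initial segment of $[1,g]_T$ of nonzero length whose endpoint maps back to $C$. By minimality, $\phi(A)$ meets $C$ only at its two endpoints and lies in the closure of one component of $\Hyp \setminus C$, so the crosscut property holds by construction and your multi-maximum degeneracy cannot occur. Second, instead of controlling the winding ($\theta \in (0,\pi)$, minimality of $K$, the anchor at $g^2w^*$), it uses density of the orbit of an irrational rotation: there exists $k$ such that $g^kv$ and $g^k\phi(a)$ lie on different components of $C \setminus \{v, \phi(a)\}$, i.e.\ the endpoint pairs of $A$ and $g^kA$ interleave on $C$; two arcs on the same side of $C$ with interleaved endpoints must cross by the Jordan curve theorem, and the crossing is a genuine self-intersection because the two segments are disjoint in $T$. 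If you want to salvage the outer-circle version, the cheapest fix is to import exactly this mechanism --- compare one sub-arc of $C_0$ between consecutive tangencies with its own $g^k$-translates chosen by density --- but at that point you have reproduced the paper's proof with a less convenient circle.
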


\begin{proof}
  We may assume via conjugation that $g \in G$ is cyclically reduced, as every conjugate of $g$ is elliptic. It follows that $\Set{g^k \given k \in \Z_{\geq 0}}$ is contained in a path $S$ on $T$.
  
  If $v$ has a finite orbit under $\gen{g}$, then the desired statement holds, so suppose otherwise. Observe that $g$ stabilises a circle $C$ containing $v$ whose centre is the unique point fixed by $g$ (in particular, $g$ acts on $C$ by rotation), as shown in \Cref{fig:circle-intersect}. Let $A = [1, a]_T$ be the minimal segment of $[1, g]_T$ of nonzero length such that $\phi(a) \in C$ (a minimal segment exists because every edge intersects $C$ at most twice). Note that $\phi(A)$ is contained in the closure of a connected component $D$ of $\Hyp \setminus C$. Since $\phi(S) \cap C$ is dense in $C$, there exists $i \in \N$ such that $g^kv$ and $g^k\phi(a)$ lie on different connected components of $C \setminus \{v, \phi(a)\}$. Therefore $[1, a]_T$ intersects $[g^k, g^k a]_T$.
\end{proof}

\begin{figure}[t]
  \centering
  \begin{tikzpicture}
    \draw[edge] (0:2) arc (0:90:2);
    \draw[edge, dashed] (90:2) arc (90:360:2);
    \node[vertex, label={right:$v$}] (v) at (0:2) {};
    \node[vertex, label={[xshift=-5]above right:$\phi(a)$}] (a) at (90:2) {};
    \node[vertex, label={right:$gv$}] (gv) at (170:2) {};
    \draw[edge] (v) to[out=180, in=270] node[auto, pos=0.8]{$A$} (a);
    \draw[edge] (a) to[out=100, in=150, looseness=2] (gv);
    
    \node[vertex, label={right:$g^kv$}] (w) at (-20:2) {};
    \node[vertex, label={[yshift=5]right:$g^k\phi(a)$}] (gw) at (70:2) {};
    \draw[edge, blue] (w) to[out=160, in=250] node[auto, pos=0.2]{$g^kA$} (gw);
    
    \node at (45:1.4) {$D$};
    \node at (0,0) {$C$};
  \end{tikzpicture}
  \caption{Self-intersection of a path extending an elliptic word}
  \label{fig:circle-intersect}
\end{figure}
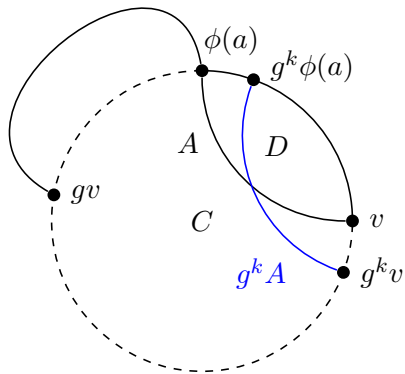

\begin{lemma}\label{lem:dense-line-has-intersection}
  If there exists some non-elliptic $g \in G$ such that $S = \Set{h \in H \given gh = hg \text{ and } \abs{h} < \abs{g}}$ is infinite, then $T$ has a self-intersection.
\end{lemma}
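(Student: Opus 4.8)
The plan is to mirror the argument of \Cref{lem:elliptic-has-intersection}, replacing the invariant circle by a level curve of $g$ and replacing the powers of $g$ by elements of its centraliser. Since $g$ is non-elliptic, it is parabolic or hyperbolic, so $Z = \Set{h \in \PSL_2(\R) \given gh = hg}$ is a one-parameter group: the parabolics fixing the boundary point of $g$ (parabolic case) or the hyperbolics sharing the axis of $g$ (hyperbolic case), together with the identity. Let $C$ be the level curve of $g$ through $v$, which is a horocycle or a hypercycle. Every element of $Z$ preserves each level curve of $g$ and acts on $C$ by translation, so $\Set{hv \given h \in Z} \subseteq C$, and there is an injective homomorphism $\tau \colon Z \to \R$ recording the signed arc-translation of each element along $C$ (injective because nontrivial elements of $Z$ fix no point of $\Hyp$).

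First I would extract a suitable $h$ from $S$. Since $S \subseteq Z$ the points $\Set{hv \given h \in S}$ lie on $C$, and since $\abs{h} < \abs{g}$ for each $h \in S$, they lie in the compact arc $C \cap \overline{B}$, where $B$ is the ball of radius $\abs{g}$ about $v$. As $S$ is infinite and $\tau$ is injective, $\tau(S)$ is an infinite bounded subset of $\R$ and so has an accumulation point. Because $S \subseteq Z \cap H$, and $\tau(Z \cap H)$ is a subgroup of $\R$ containing $\tau(S)$, this subgroup has an accumulation point; hence $\tau(Z \cap H)$ is dense in $\R$.

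Next I would set up the crossing. Let $A = [1, a]_T$ be the minimal initial segment of $[1, g]_T$ of nonzero length with $\phi(a) \in C$; this exists since each edge meets $C$ at most twice and $gv \in C$, and if $\phi(a) = v$ then $a \neq 1$ already yields a self-intersection, so I may assume $\phi(a) \neq v$. By minimality $\phi(A)$ lies in the closure of a single component $D$ of $\Hyp \setminus C$, with endpoints $v$ and $\phi(a)$ on $C$; let $\ell > 0$ be the arc-distance between them. Using density, choose $h \in Z \cap H$ with $0 < \tau(h) < \ell$. Then along $C$ the points occur in cyclic order $v$, $hv$, $\phi(a)$, $h\phi(a)$, so the endpoints of $\phi(A)$ and of $\phi(hA) = h\phi(A)$ interleave. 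Since $h$ preserves $C$ and each of its complementary components, $\phi(hA) \subseteq \overline{D}$ as well. By the Jordan curve theorem the arcs $\phi(A)$ and $\phi(hA)$ must cross inside $D$; hence $[1, a]_T$ intersects $[h, ha]_T$, and $T$ has a self-intersection.

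The main obstacle is the passage from the compact, automatically recurrent picture of \Cref{lem:elliptic-has-intersection} to the non-compact level curve here: on a horocycle or hypercycle there is no a priori recurrence, and it is precisely the displacement bound $\abs{h} < \abs{g}$ that confines $\Set{hv \given h \in S}$ to a compact arc and forces $\tau(Z \cap H)$ to be dense. Checking that this density realises the interleaving cyclic order, and that $h$ preserves the component $D$ so that both arcs stay on the same side of $C$, are the steps requiring the most care.
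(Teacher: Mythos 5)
Your proposal follows the same route as the paper's proof---compactness of the bounded part of the orbit, density of the commuting elements' orbit in the level curve $C$ through $v$, the first-departure segment $A=[1,a]_T$, a translate $hA$ by an element with small translation $\tau(h)$ along $C$, and a Jordan-curve crossing---so the geometric core is sound. But your last inference, ``hence $[1,a]_T$ intersects $[h,ha]_T$, and $T$ has a self-intersection,'' has a genuine gap. By \Cref{def:arc-map} a self-intersection is a point of $\Hyp$ with \emph{two distinct} preimages in $T$. Your Jordan argument yields a point $z\in\phi(A)\cap\phi(hA)$, i.e.\ points $p\in[1,a]_T$ and $q\in[h,ha]_T$ with $\phi(p)=\phi(q)=z$, but nothing rules out $p=q$: the two tree segments can overlap in $T$. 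Writing $h$ as a reduced word in $F_X$, this happens precisely when $h=p_jp_i^{-1}$ in $F_X$ for two vertices (prefixes) $p_i,p_j$ of the segment, and in that event every crossing your argument produces may lie on the common subsegment, giving no self-intersection at all. This is exactly why the paper's proof imposes the extra condition that no vertex of the translated segment has the same $\phi$-image as a vertex of $[1,g]_T$: a shared point of two such segments forces a shared vertex (a shared edge-interior point forces a shared edge, hence shared vertices, once one checks the four endpoints are not shared---which your positional data on $C$ does give), so that condition makes the two segments disjoint in $T$, and only then is the crossing a genuine self-intersection. The fix is available from what you already proved---only finitely many $h$ are excluded, while density of $\tau(Z\cap H)$ supplies infinitely many candidates with $0<\tau(h)<\ell$---but the step has to be made.

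A second, smaller gap: you assert that $C$ is a horocycle or a hypercycle and hence that ``each edge meets $C$ at most twice.'' If $g$ is hyperbolic and $v$ happens to lie on its axis, then $C$ \emph{is} the axis, a geodesic, and an edge of $T$ may lie entirely inside $C$; then the minimal nonzero initial segment $A$ with $\phi(a)\in C$ need not exist, and your construction does not start. This degenerate case is the reason the paper's proof allows $[1,a]_T\cap L$ to be ``a geodesic of nonzero length'' rather than only $\{v,\phi(a)\}$. It is easy to dispose of (a translate by a suitably small $h$ of an edge lying along $C$ overlaps it in $\Hyp$ while being disjoint from it in $T$, again after excluding finitely many bad $h$), but as written your proof silently excludes it.
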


\begin{proof}
  Let $H = \gen{S}$. By compactness, $S$ has a limit point, so $Hv$ is a dense subset of a curve $L$ stabilised by $H$. Let $a \in [1, g]_T$ be such that $[1, a]_T \cap L$ is either $\{v, \phi(a)\}$ or a geodesic of nonzero length. Let $L'$ be the segment of $L$ bounded by $v$ and $\phi(a)$. Since $Hv$ is dense in $L$, there exists $x \in H$ such that $xv \in L'$, $xgv \notin L'$, and no vertex of $[x, xg]_T$ intersects a vertex of $[1, g]_T$. Thus $[x, xg]_T$ intersects the region bounded by $L'$ and $[1, a]_T$, so it intersects $[1, g]_T$.
\end{proof}

A subgroup of $\PSL_2(\R)$ is \emph{elementary} if there exists a finite orbit in $\Hyp$ or $\partial\Hyp$ \cite[\textsection 5.1]{beardon}. There are three types of elementary subgroups of $\PSL_2(\R)$:
\begin{enumerate}
  \item Groups that act by rotation around a common fixed point of $\Hyp$.
  \item Groups that act by a parabolic action fixing a common point of $\partial\Hyp$.
  \item Groups that stabilise a bi-infinite geodesic $\gamma$ of $\Hyp$. Every element of such a group acts either by translation along $\gamma$ or by a rotation of order 2 swapping the boundary points of $\gamma$.
\end{enumerate}
It is a special case of Beardon's classification \cite[\textsection 5.1]{beardon} of elementary subgroups of $\PSL_2(\C)$ that a discrete elementary subgroup $G$ of $\PSL_2(\R)$ is either cyclic or contains a cyclic subgroup $H$ of index 2. In the latter case, every element of $G \setminus H$ is elliptic of order 2.

\begin{lemma}\label{lem:cyclic-transitive}
  If $H, H' \in \PSL_2(\R)$ are cyclic subgroups with non-trivial intersection, then $\gen{H, H'}$ is cyclic.
\end{lemma}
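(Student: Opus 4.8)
The plan is to reduce the statement to the structure of the centraliser of a common element. First I would pick a nontrivial $c \in H \cap H'$ and write $H = \gen{a}$, $H' = \gen{b}$, so that $c = a^m = b^n$ for some nonzero integers $m, n$. Since $c$ lies in the abelian groups $H$ and $H'$, both $a$ and $b$ commute with $c$; hence $\gen{H, H'} = \gen{a, b}$ is contained in the centraliser $C = C_{\PSL_2(\R)}(c)$. The problem thus becomes understanding two-generated subgroups of $C$.

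Next I would identify $C$ using the trichotomy recalled above. A nontrivial $c$ is hyperbolic, parabolic, or elliptic, and any root of $c$ has the same type and the same fixed-point set, so $a$ and $b$ share the type of $c$. If $c$ is hyperbolic with axis $\gamma$, then $C$ is the one-parameter group of translations along $\gamma$; if $c$ is parabolic fixing $\xi \in \partial\Hyp$, then $C$ is the one-parameter group of parabolics fixing $\xi$; in both cases $C \cong (\R, +)$. If $c$ is elliptic fixing $p \in \Hyp$, then $C$ is the group of rotations about $p$, isomorphic to $\mathrm{SO}(2) \cong \R/\Z$. These are the standard centraliser computations in $\PSL_2(\R)$ (cf.\ \cite[\textsection 5.1]{beardon}), read off by conjugating $c$ to a normal form. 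In every case $C$ is abelian, so $\gen{a, b}$ is abelian.

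To finish in the non-elliptic case, I would transport everything to $(\R, +)$ via the isomorphism $C \cong \R$, writing $\alpha, \beta, \gamma$ for the images of $a, b, c$. The relations $m\alpha = \gamma = n\beta$ with $\gamma \neq 0$ force $\alpha/\beta = n/m \in \Q$, so $\alpha$ and $\beta$ are commensurable and $\gen{\alpha, \beta} = \tfrac{\gcd(m,n)}{mn}\,\gamma\,\Z$ is infinite cyclic; pulling back, $\gen{a, b}$ is cyclic. The elliptic case is the genuine obstacle, and it is where the common-power hypothesis must be used decisively: a two-generated subgroup of the circle $\R/\Z$ need not be cyclic (for instance $\gen{\theta, \tfrac12}$ with $\theta$ irrational is $\Z \oplus \Z/2$). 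The resolution is that the relevant $c$ has finite order --- as it must whenever $\gen{a, b}$ is discrete, which is the setting in which the lemma is applied --- and then $a^{m\cdot\mathrm{ord}(c)} = b^{n\cdot\mathrm{ord}(c)} = 1$ shows that $a$ and $b$ also have finite order. Hence $\gen{a, b}$ is a finite subgroup of $\mathrm{SO}(2)$, and every finite subgroup of $\R/\Z$ is cyclic, completing the argument.
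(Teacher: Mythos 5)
Your proposal follows the same skeleton as the paper's proof---the nontrivial common power $c$ forces $a$ and $b$ into the centraliser of $c$, hence into a one-parameter subgroup of $\PSL_2(\R)$---but you carry out two steps that the paper elides, and both turn out to matter. The paper's proof concludes that $\gen{H, H'}$ ``is isomorphic to a subgroup of the additive group $\R$, so it is cyclic.'' This is inaccurate on two counts: subgroups of $\R$ need not be cyclic (e.g.\ $\Z + \sqrt{2}\,\Z$), so one genuinely needs your commensurability computation $m\alpha = n\beta = \gamma \neq 0 \Rightarrow \gen{\alpha, \beta} = \frac{\gcd(m,n)}{mn}\,\gamma\,\Z$; and in the elliptic case the relevant centraliser is $\R/\Z$, not $\R$, so the embedding claim fails outright. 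Your write-up is the correct version of the intended argument.

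Moreover, your instinct that the elliptic case is the genuine obstacle is right in the strongest sense: the lemma as stated is false there, so the finite-order (equivalently, discreteness) hypothesis you import is not a blemish in your proof but a necessary repair. Concretely, fix $p \in \Hyp$, let $\theta$ be irrational, and let $a, b$ be the rotations about $p$ through angles $2\pi\theta$ and $2\pi(\theta/2 + 1/3)$. Then $a^3 = b^6 \neq 1$, so $\gen{a} \cap \gen{b}$ is nontrivial, yet $\gen{a, b}$ corresponds to the subgroup $\gen{\theta/2,\, 1/3}$ of $\R/\Z$, which is isomorphic to $\Z \oplus \Z/3\Z$ and is not cyclic. (Your candidate example $\gen{\theta, 1/2}$ has trivial intersection, so it does not actually witness the failure; the one above does.) The lemma survives in the paper only because of how it is used: in \Cref{prop:not-disc-free-has-intersection} it is invoked after all elliptic elements of $G$ have been ruled out, so only the non-elliptic case---which you prove completely---is ever needed. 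The statement should carry a non-elliptic (or discreteness) hypothesis, exactly as your proof demands.
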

\begin{proof}
  Two nontrivial elements of $\PSL_2(\R)$ commute if and only if they have the same fixed point set in $\Hyp \cap \partial\Hyp$. Hence every nontrivial element of $\gen{H, H'}$ has the same fixed point set. It follows that $\gen{H, H'}$ is isomorphic to a subgroup of the additive group $\R$, so it is cyclic.
\end{proof}

\begin{proposition}\label{prop:not-disc-free-has-intersection}
  If $G$ is indiscrete, then $T$ has a self-intersection.
\end{proposition}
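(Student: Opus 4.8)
The plan is to reduce the statement to the two mechanisms already established for producing self-intersections: an elliptic element, handled by \Cref{lem:elliptic-has-intersection}, and a non-elliptic element with infinitely many commuting elements of strictly smaller displacement, handled by \Cref{lem:dense-line-has-intersection}. If $G$ contains any elliptic element, then that element is an elliptic word in $X$ and \Cref{lem:elliptic-has-intersection} immediately yields a self-intersection, so I may assume throughout that every nontrivial element of $G$ is parabolic or hyperbolic. Under this assumption it suffices to exhibit a non-elliptic $g \in G$ whose centraliser $C_G(g)$ is non-discrete: the centraliser of a non-elliptic element lies in the abelian one-parameter subgroup through it, which is isomorphic to $\R$, so a non-discrete $C_G(g)$ is dense and therefore contains infinitely many $h$ with $\abs{h} = d(v, hv) < \abs{g}$ (using that $\abs{g} > 0$, as $g$ fixes no interior point). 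This is exactly the hypothesis of \Cref{lem:dense-line-has-intersection}.

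The first key step is to show that an indiscrete subgroup with no elliptic elements must be elementary. I would argue by contradiction using the closure $\overline{G}$ in $\PSL_2(\R)$: since $G$ is indiscrete, $\overline{G}$ is a non-discrete closed subgroup and hence a Lie subgroup of positive dimension, and if $G$ were non-elementary then so would be $\overline{G}$. Its identity component is a positive-dimensional connected normal subgroup; every proper connected subgroup of $\PSL_2(\R)$ is elementary (a conjugate of the rotation group, the parabolic one-parameter group, the hyperbolic one-parameter group, or the stabiliser of a boundary point), so normality would force a finite orbit and make $\overline{G}$ elementary, a contradiction. Hence the identity component is all of $\PSL_2(\R)$, so $\overline{G} = \PSL_2(\R)$ and $G$ is dense; but the elliptic elements form a nonempty open subset of $\PSL_2(\R)$, so a dense $G$ would contain an elliptic element, contradicting our assumption.

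Therefore $G$ is elementary, and since it has no elliptic element it cannot be of the rotation type; the remaining two types each fix a point $\xi \in \partial\Hyp$, so $G$ lies in the stabiliser $B$ of $\xi$. Writing $B$ as the semidirect product of its normal subgroup $N$ of parabolic translations fixing $\xi$ (together with the identity) and the one-parameter group of dilations, I would analyse $G \le B$ according to $G \cap N$. If $G \cap N$ is non-discrete, then it is dense in $N \cong \R$, and any parabolic $\tau \in G \cap N$ has $C_G(\tau) \supseteq G \cap N$ non-discrete. If $G \cap N$ is trivial, then the dilation projection restricts to an injection of $G$ into an abelian group, so $G$ is abelian; since commuting non-elliptic elements share their fixed-point set (the principle underlying \Cref{lem:cyclic-transitive}), $G$ consists of hyperbolics with a common axis, and its indiscreteness makes it dense in the corresponding one-parameter group, whence $C_G(g) = G$ is non-discrete for every nontrivial $g$. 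The one remaining possibility, $G \cap N \cong \Z$, is excluded: normality of $G \cap N$ forces every dilation factor occurring in $G$ to preserve the lattice $G \cap N$ and hence to be trivial, so $G = G \cap N$ would be discrete. In every case we obtain a non-elliptic element with non-discrete centraliser, and \Cref{lem:dense-line-has-intersection} completes the proof.

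I expect the main obstacle to be the first key step, converting indiscreteness into the structural statement that $G$ is elementary: this is where the global topology of $\PSL_2(\R)$ enters, through the classification of its connected subgroups and the fact that a dense subgroup necessarily contains elliptic elements. The subsequent Borel analysis is elementary but requires the careful exclusion of the $G \cap N \cong \Z$ case, and the identification of the centraliser as a single dense one-parameter family (rather than an accidental union of cyclic pieces) is exactly what \Cref{lem:cyclic-transitive} guarantees.
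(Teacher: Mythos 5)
Your proof is correct, but it takes a genuinely different route from the paper after the common first step. Both arguments dispose of elliptic elements via \Cref{lem:elliptic-has-intersection} and then need the fact that an indiscrete subgroup without elliptics is elementary; the paper simply cites \cite[Theorem 8.2.6]{beardon} for this, whereas you re-derive it from Cartan's closed-subgroup theorem and the classification of connected subgroups of $\PSL_2(\R)$ --- heavier machinery, but self-contained. From there the two proofs diverge. The paper stays at the level of the generating set: using a maximal subset of $X$ generating a discrete group together with \Cref{lem:cyclic-transitive}, it extracts a pair $g, h \in X$ with $\gen{g,h}$ indiscrete, and then exhibits an explicit word that is trivial in $G$ but nontrivial as a reduced word --- $ghg^{-1}h^{-1}$ when $g,h$ commute, and the iterated commutator $cc'c^{-1}c'^{-1}$ with $c = ghg^{-1}h^{-1}$, $c' = cgc^{-1}g^{-1}$ otherwise --- so the corresponding path in $T$ returns to $v$ and is itself the self-intersection; notably, this makes no use of \Cref{lem:dense-line-has-intersection}. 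You instead place $G$ inside the stabiliser of a boundary point, split into cases according to $G \cap N$ (non-discrete, trivial, or infinite cyclic, the last correctly excluded by normality), and in each surviving case produce a non-elliptic element whose centraliser in $G$ is dense in a one-parameter subgroup, which is exactly the hypothesis of \Cref{lem:dense-line-has-intersection} (under the intended reading $h \in G$ of its statement, which contains a typo). Your approach thus gives that lemma --- stated but never invoked in the paper --- its natural application, at the cost of a longer structural analysis; the paper's approach is shorter, produces a finite explicit certificate (a relation among two generators), and matches the algorithmic output of \Cref{alg:free}, which returns indiscrete $2$-generated subgroups as witnesses.
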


\begin{proof}
  \Cref{lem:elliptic-has-intersection} covers the case where $G$ has elliptic elements, so we suppose that no element of $G$ is elliptic. By \cite[Theorem 8.2.6]{beardon}, $G$ is elementary.
  
  Suppose for a contradiction that every pair of elements in $X$ generate a discrete group. Let $S$ be a maximal subset of $X$ such that $\gen{S}$ is discrete. Let $a \in X \setminus S$, $b \in S$. Note that $\gen{S}$ and $\gen{a, b}$ are both discrete and elementary, and thus cyclic. By \Cref{lem:cyclic-transitive}, the group $H = \gen{S, a}$ is cyclic. Since $H$ is generated by a parabolic or hyperbolic element, it is discrete. By contradiction, there must exist $g, h \in X$ such that $\gen{g, h}$ is discrete.
  
  If $g$ and $h$ commute, then $ghg^{-1}h^{-1}v = v$, so $T$ has a self-intersection. Suppose $g$ and $h$ do not commute. We may assume that $g$ is hyperbolic. By \cite[Theorem 4.3.5]{beardon}, the commutators $c = ghg^{-1}h^{-1}$ and $c' = cgc^{-1}g^{-1}$ are parabolic and commute. Therefore $cc'c^{-1}c'^{-1}v = v$, so $T$ has a self-intersection.
\end{proof}

\begin{corollary}\label{cor:reduced-is-discrete}
  If $X$ is reduced, then $G$ is discrete.
\end{corollary}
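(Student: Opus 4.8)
The plan is to prove the contrapositive: assuming $X$ is reduced, I will show that $G$ indiscrete leads to a contradiction. First I would invoke \Cref{prop:not-disc-free-has-intersection}: if $G$ is indiscrete then $T$ has a self-intersection. Feeding this into \Cref{prop:path-impl-clock} produces a self-intersecting \emph{bounding} path $P$, which by \Cref{prop:leftmost-repeats} has a principal word $g$.

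Next I would argue that $g$ is parabolic or hyperbolic. Since a principal word is in particular a nontrivial subword of itself, it is short, so axiom A2 of \Cref{def:reduced} forbids it from being elliptic. With $g$ non-elliptic, \Cref{lem:minimal-loop} applies and yields a short self-intersecting segment $S$ of $P$.

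Now \Cref{lem:half-loop-reduces} gives a dichotomy: either $X$ is not reduced---directly contradicting the hypothesis---or $S$ bounds a compact fundamental domain for $G$ whose cosets tile $\Hyp$. In the latter case the Poincaré Polygon Theorem shows that $G$ is discrete, again contradicting indiscreteness. Either way we reach a contradiction, so $G$ must be discrete.

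The main obstacle is the cocompact branch: the absence of self-intersections (the engine behind the free case in \Cref{prop:reduced-is-discrete-free}) is unavailable here, since $S$ itself is a self-intersection. Discreteness therefore cannot be read off from \Cref{prop:not-disc-free-has-intersection} and must instead be extracted from the tiling via the Poincaré Polygon Theorem, exactly as in the cocompact half of \Cref{lem:half-loop-reduces}. An equivalent and perhaps cleaner packaging is to run directly through the two cases exposed in the proof of \Cref{prop:reduced-is-discrete-free}: in the free case $\phi$ is injective, so the contrapositive of \Cref{prop:not-disc-free-has-intersection} gives discreteness, whereas in the cocompact case the Poincaré Polygon Theorem does.
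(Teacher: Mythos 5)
Your proposal is correct and uses exactly the same chain of lemmas as the paper's proof (\Cref{prop:not-disc-free-has-intersection}, \Cref{prop:path-impl-clock}, A2 to rule out elliptic principal words, \Cref{lem:minimal-loop}, \Cref{lem:half-loop-reduces}, and Poincar\'e for the cocompact branch), merely traversed in the contrapositive direction: the paper argues forward by cases (cocompact, hence discrete; otherwise no short self-intersecting segment, hence no self-intersection, hence discrete), while you assume indiscreteness and derive the contradiction. As you note yourself, the two packagings are equivalent, so this is essentially the paper's own argument.
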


\begin{proof}
  If $X$ is reduced then, by \Cref{lem:half-loop-reduces}, either the action of $G$ on $\Hyp$ is cocompact (hence $G$ is discrete) or $T$ has no short self-intersecting segments. As in the proof of \Cref{prop:reduced-is-discrete-free}, the latter case implies that $T$ has no self-intersection. By \Cref{prop:not-disc-free-has-intersection}, $G$ is discrete.
\end{proof}

\subsection{A bound on the displacement of a replacement}

Define $\Phi \colon [0, \infty) \to [0, \infty)$ by
\[
  \Phi(x) = \sinh\left(\frac{x}{2}\right).
\]
The following theorem is due to Beardon \cite[Theorem 8.3.1]{beardon}.

\begin{theorem}\label{prop:collar}
  Let $g, h \in \PSL_2(\R)$. If $\gen{g, h}$ is discrete, torsion-free, and non-abelian, then $\Phi(\abs{g})\Phi(\abs{h}) \geq 1$.
\end{theorem}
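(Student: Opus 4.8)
The plan is to argue the \emph{contrapositive} and exploit proper discontinuity, after first reducing to the generic configuration. Since $\gen{g, h}$ is discrete and torsion-free it has no nontrivial elliptic elements: a discrete group of rotations about a point of $\Hyp$ is finite cyclic, which torsion-freeness forbids. If $\gen{g, h}$ were elementary, then by the classification of elementary subgroups recalled before \Cref{lem:cyclic-transitive} it would be cyclic or contain a cyclic subgroup of index $2$ whose complement consists of order-$2$ elliptics; the second case is excluded by torsion-freeness, so $\gen{g, h}$ would be cyclic and hence abelian, contrary to hypothesis. Thus I may assume $\gen{g, h}$ is non-elementary, so each of $g$ and $h$ is hyperbolic or parabolic and they share no fixed point on $\partial\Hyp$.

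Next I would pass to a convenient model. Conjugating by an isometry carrying $v$ to the centre $0$ of the Poincaré disk alters neither discreteness, torsion-freeness, nor non-commutativity, and preserves each displacement, so I may take $v = 0$. Let $r_g$ denote the Euclidean modulus of $g0$; since $d(0, g0) = \abs{g}$ and $r_g = \tanh(\abs{g}/2)$, we obtain $\Phi(\abs{g}) = r_g/\sqrt{1 - r_g^2}$, whence a short calculation shows that the claim $\Phi(\abs{g})\Phi(\abs{h}) \geq 1$ is \emph{equivalent} to $r_g^2 + r_h^2 \geq 1$. Equivalently, in the upper half-plane with $v = i$ one computes $4\,\Phi(\abs{g})^2 = \tr^2 g - 4 + (b - c)^2$ for $g = \left(\begin{smallmatrix} a & b \\ c & d \end{smallmatrix}\right)$, which exhibits $\Phi(\abs{g})$ as a displacement refinement of the Jørgensen quantity $\abs{\tr^2 g - 4}$ and signals that the result is a Jørgensen-type inequality read off at the basepoint.

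With this reformulation I would assume for contradiction that $\Phi(\abs{g})\Phi(\abs{h}) < 1$ and run a Jørgensen-type iteration. Setting $h_0 = h$ and $h_{n+1} = h_n g h_n^{-1}$, the smallness of the product forces the orbit points $h_n 0$ to accumulate while the $h_n$ remain pairwise distinct, so that $\gen{g, h}$ contains infinitely many elements $f$ with $\abs{f}$ bounded. This contradicts the proper discontinuity of a discrete group, for which only finitely many elements satisfy $\abs{f} \leq R$. The only way the iteration can fail to generate new elements is if it stabilises, which occurs precisely when $g$ and $h$ generate an elementary group --- the case already excluded. The main obstacle is the quantitative heart of this argument: establishing a commutator (or conjugate) displacement estimate sharp enough that the threshold is exactly $1$, and in particular handling the parabolic case. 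There the translation length $\ell(g)$ vanishes while $\abs{g} > 0$, so the statement cannot be reduced to the cleaner inequality $\sinh(\ell(g)/2)\sinh(\ell(h)/2) \geq 1$ for translation lengths; controlling the degenerate limits of the conjugation sequence, and extracting the optimal constant, is where the real work lies.
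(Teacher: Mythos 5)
The paper offers no proof of this statement at all: it is quoted directly from Beardon, with the citation \cite[Theorem 8.3.1]{beardon} serving as the proof. So the question is whether your attempt stands on its own as a complete argument, and it does not --- as you yourself concede. What you do establish is correct: discreteness plus torsion-freeness rules out elliptics; a discrete torsion-free elementary subgroup of $\PSL_2(\R)$ is cyclic (the index-two case contains order-$2$ elliptics), hence abelian, so the group may be assumed non-elementary; and both reformulations are right, namely $\Phi(\abs{g})\Phi(\abs{h}) \geq 1$ iff $r_g^2 + r_h^2 \geq 1$ with $r_g = \tanh(\abs{g}/2)$ (this is exactly the computation the paper itself performs in \Cref{sec:exact}), and $4\Phi(\abs{g})^2 = \tr^2 g - 4 + (b-c)^2$, which follows from $2\cosh\abs{g} = a^2+b^2+c^2+d^2$ together with $ad - bc = 1$.

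The gap is that the entire content of the theorem lies in the step you describe but do not carry out. Asserting that $\Phi(\abs{g})\Phi(\abs{h}) < 1$ ``forces the orbit points $h_n 0$ to accumulate while the $h_n$ remain pairwise distinct'' is not a deduction from proper discontinuity; it is the inequality itself, restated. A J{\o}rgensen-type iteration does not hand you the constant $1$: J{\o}rgensen's inequality is a trace inequality, $\abs{\tr^2 g - 4} + \abs{\tr(ghg^{-1}h^{-1}) - 2} \geq 1$, and converting it into a lower bound for the product of half-displacement sinh's at an \emph{arbitrary} basepoint is precisely the sharp geometric work in Beardon's proof (comparing the displacement product with the commutator trace and exploiting the absence of elliptic elements); done naively it yields a weaker constant, and the parabolic degeneration --- which you correctly flag as resistant to translation-length arguments --- is left untouched. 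Since you explicitly defer ``the quantitative heart'' to future work, the attempt is an accurate reduction plus an honest description of the missing lemma, not a proof.
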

Note that $\Phi$ is monotonically increasing, $\Phi(0) = 0$, and $\lim_{x \to \infty}\Phi(x) = 1$. This implies the following:

\begin{corollary}\label{cor:non-commuting-bound}
  Suppose $G$ is discrete and torsion-free. For all $\e > 0$, if $g, h \in X$ do not commute and $\abs{g} \leq \Phi^{-1}(1/\Phi(\e))$, then $\abs{h} \geq \e$.
\end{corollary}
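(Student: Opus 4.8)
The plan is to apply \Cref{prop:collar} to the pair $(g,h)$ and then unwind the resulting inequality using the monotonicity of $\Phi$. The entire argument is a short chain of inequalities, so I expect no genuine obstacle beyond checking that the hypotheses of \Cref{prop:collar} are met.

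First I would verify those hypotheses. Since $g$ and $h$ do not commute, the subgroup $\gen{g,h}$ is non-abelian; being a subgroup of $G$, which is assumed discrete and torsion-free, it is itself discrete and torsion-free. Thus \Cref{prop:collar} applies and yields $\Phi(\abs{g})\Phi(\abs{h}) \geq 1$. I would also record that $\abs{g} > 0$: otherwise $g$ would fix $v$ and hence be elliptic or trivial, but a discrete torsion-free group contains no nontrivial elliptic element, forcing $g = 1$, contradicting that $g$ does not commute with $h$. Consequently $\Phi(\abs{g}) > 0$, and $\Phi(\e) > 0$ as well since $\e > 0$; this legitimises the reciprocals taken below.

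Next I would translate the hypothesis on $\abs{g}$. Because $\Phi$ is strictly increasing, applying $\Phi$ to $\abs{g} \leq \Phi^{-1}(1/\Phi(\e))$ gives $\Phi(\abs{g}) \leq 1/\Phi(\e)$, and taking reciprocals of these positive quantities gives $1/\Phi(\abs{g}) \geq \Phi(\e)$. Combining this with the collar inequality written as $\Phi(\abs{h}) \geq 1/\Phi(\abs{g})$ produces $\Phi(\abs{h}) \geq \Phi(\e)$. Since $\Phi$ is increasing, this is equivalent to $\abs{h} \geq \e$, the desired conclusion.

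The one point deserving a sentence of care is that $\Phi^{-1}(1/\Phi(\e))$ is well-defined. This holds because $\Phi(x) = \sinh(x/2)$ is a strictly increasing continuous bijection of $[0,\infty)$ onto $[0,\infty)$, so $\Phi^{-1}$ is defined (and increasing) on every positive real, in particular on $1/\Phi(\e)$. Everything else is formal, so I anticipate no substantive difficulty: the content of the corollary is carried entirely by \Cref{prop:collar}, and the corollary simply repackages it into a threshold statement via the monotonicity of $\Phi$.
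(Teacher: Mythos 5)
Your proposal is correct and matches the paper's approach: the paper states this corollary as an immediate consequence of \Cref{prop:collar} together with the monotonicity of $\Phi$, which is exactly the chain of inequalities you give. Your additional care (verifying that $\gen{g,h}$ inherits discreteness and torsion-freeness, and that $\Phi(\abs{g}) > 0$ so the reciprocals are legitimate) only makes explicit what the paper leaves implicit.
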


A core procedure in our recognition algorithm iteratively replaces generators with words of smaller displacement until a short word is found with displacement below some $\delta(X)$ determined by \Cref{cor:non-commuting-bound}. If $G$ is discrete, then only finitely many such replacements can occur. However, if $G$ is indiscrete, then the displacements could remain bounded from below by $\delta$ at every step, so the algorithm would not terminate.
The following proposition eliminates this scenario.

\begin{restatable}{proposition}{propbound}
  \label{prop:indiscrete-reduce-bound}
  Let $X$ be a finite subset of $\PSL_2(\R)$ generating a group $G$. Let $\e > 0$.
  One of the following holds:
  \begin{enumerate}
    \item $G$ is discrete and torsion-free.
    \item There exists an elliptic short word $h$ of $X$.
    \item There exists a short word $h$ of $X$ such that $\abs{h} < \e$.
    \item There exists a good replacement $g$ for some $x \in X$ such that $\abs{x}-\abs{g} \geq \delta(\abs{x}, \e)$, where
    \begin{gather*}
      \delta(r, \e) = \min\{\e, r - \cosh^{-1}(\cosh(r) - \eta(r, \e))\} > 0, \\
      \eta(r, \e) = 2\sinh^3\left(\frac{\e}{2}\right)\csch\left(r+\frac{\e}{2}\right).
    \end{gather*}
  \end{enumerate}
\end{restatable}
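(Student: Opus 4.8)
The plan is to prove the dichotomy by assuming that (1), (2), and (3) all fail and deriving (4); the organising principle is that the three conditions defining a reduced generating set (\Cref{def:reduced}) detect precisely the obstructions encoded by (1)--(3), so their simultaneous failure forces a self-intersection of a very controlled type from which a \emph{quantitatively} good replacement can be read off.

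First I would settle the qualitative structure. If $T$ had no self-intersection, then $\phi$ would be injective, $\Cay(G,X)$ a tree, and $G$ free; moreover \Cref{prop:not-disc-free-has-intersection} would give discreteness and \Cref{lem:elliptic-has-intersection} would exclude elliptic (hence torsion) elements, so (1) would hold. Since (1) fails, $T$ has a self-intersection, and \Cref{prop:path-impl-clock} produces a self-intersecting bounding path $P$ with principal word $g$. As the principal word is itself a short word, if $g$ were elliptic then (2) would hold; since (2) fails, $g$ is parabolic or hyperbolic, and \Cref{lem:minimal-loop} yields a short self-intersecting segment $S$. Applying \Cref{lem:half-loop-reduces} to $S$, either $S$ bounds a compact fundamental domain with $X$ a minimal generating set --- whence $G$ is discrete and cocompact, and since (2) excludes elliptic short words the Poincaré polygon theorem forces $G$ to be torsion-free, contradicting the failure of (1) --- or $X$ is not reduced and the proof of \Cref{lem:half-loop-reduces} exhibits a good replacement $g'$ for a term $x$ occurring along $S$, with $\abs{g'} < \abs{x}$; replacing $x$ by $x^{-1}$ if necessary we may take $x \in X$.

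It remains to quantify the decrease $\abs{x} - \abs{g'}$. Here I would use that every edge type appearing in $S$, as well as the word $g'$ itself, is a short word, so the failure of (3) forces each of their displacements to be at least $\e$. Geometrically, the good replacement arises (as in \Cref{fig:replacements}) from the crossing at the self-intersection point $w$ of the edge $[v, xv]$, of length $r = \abs{x}$, with a second edge of length at least $\e$, and \Cref{lem:metric-trick-2} identifies which endpoint to retarget while guaranteeing a strict decrease. The crux is to make the decrease uniform: since the two crossing geodesics each have length at least $\e$, the point $w$ lies at distance at least $\e/2$ from the relevant vertices, which bounds from below both the length of the retargeted subsegment and the crossing angle. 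Feeding these bounds into the hyperbolic law of cosines for the triangle on $v$, $w$, and the new endpoint yields an inequality $\cosh\abs{g'} \le \cosh r - \eta(r,\e)$, with $\eta(r,\e) = 2\sinh^3(\e/2)\csch(r+\e/2)$ emerging as the worst-case value of the resulting expression. Rearranging gives $\abs{x} - \abs{g'} \ge r - \cosh^{-1}(\cosh r - \eta(r,\e))$, and combining this with the complementary regime in which the decrease is already at least $\e$ produces $\delta(r,\e) = \min\{\e,\, r - \cosh^{-1}(\cosh r - \eta(r,\e))\}$. Positivity of $\delta$ is then immediate, since $\eta(r,\e) > 0$ for every $\e > 0$ and the bound $\cosh\abs{g'} \le \cosh r - \eta$ automatically keeps the argument of $\cosh^{-1}$ at least $1$.

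I expect the hyperbolic-trigonometry estimate of the previous paragraph to be the main obstacle. The \emph{existence} of a good replacement is essentially already contained in \Cref{lem:half-loop-reduces}; the work lies in extracting a lower bound on the displacement drop that is uniform over all admissible crossing configurations and that takes exactly the stated closed form. This requires a careful extremal analysis of the crossing triangle, controlling simultaneously the depth of the intersection point inside each edge and the crossing angle, and verifying that the minimising configuration produces $\eta(r,\e)$ rather than a weaker estimate. The $\min$ with $\e$ in the definition of $\delta$ should be read as packaging together the regime governed by this trigonometric bound with the complementary regime in which the replacement already shortens the generator by at least $\e$.
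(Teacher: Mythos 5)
The qualitative half of your argument --- failure of (1) forces a self-intersection of $T$, \Cref{prop:path-impl-clock} plus the failure of (2) yield a short self-intersecting segment via \Cref{lem:minimal-loop}, and the case analysis behind \Cref{lem:half-loop-reduces} and \Cref{fig:replacements} then produces \emph{some} good replacement --- is sound and matches the paper's setup. The gap is in the quantitative half, which is the actual content of the proposition. Your central uniformity claim, that because the two crossing geodesics have length at least $\e$ the intersection point $w$ lies at distance at least $\e/2$ from the relevant vertices and the crossing angle is bounded below, is false on both counts: two geodesic segments of length at least $\e$ can cross arbitrarily close to an endpoint of each, and no lower bound on the crossing angle follows from any of the available length constraints (two long geodesics may cross at an arbitrarily small angle). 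Even if you invoke the failure of (3) to force every side of the quadrilateral spanned by the four endpoints to have length at least $\e$ (each side is a short word), the most one can deduce is that at most one of the four vertices lies within $\e/2$ of $w$; that one bad vertex may be exactly the one your triangle on $v$, $w$, and the new endpoint uses, and the angle at $w$ remains uncontrolled. Consequently your inequality $\cosh\abs{g'} \leq \cosh r - \eta(r,\e)$ is not derived but merely asserted to ``emerge'' from an extremal analysis that you defer and that, set up through $w$, would not close.

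The paper's proof avoids these obstacles by never using the intersection point or the crossing angle quantitatively. It forms the quadrilateral $ABCD$ whose \emph{diagonals} are the two crossing edges and whose \emph{sides} are the candidate replacement words, extends the side $DA$ beyond $A$ by exactly $\e/2$ to an auxiliary point $A'$, and uses two neutral-geometry lemmas (\Cref{lem:quad-bisector} and \Cref{lem:quad-worst-case}, proved with the perpendicular bisector of $CD$, the Crossbar Theorem, and the Hinge Theorem) to arrange, after relabelling, that $A'D \leq A'C$. The estimate itself (\Cref{lem:quad-reduce-bound}) then comes from writing the hyperbolic cosine rule in the two triangles $ACD$ and $A'CD$, which share the angle at $D$: eliminating that common (unknown) angle, and using only $CD \geq \e$ (from the failure of (3)) together with $A'C \geq A'D$, gives $\cosh(AC) - \cosh(AD) > 2\sinh^3(\e/2)\csch(AC + \e/2)$, which is exactly $\eta$. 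In other words, the $\e/2$ in the correct proof is the length of an artificial extension of a side, not a distance to the intersection point, and the angle you would need to bound is instead cancelled between two triangles. Repairing your write-up essentially requires replacing your $w$-based triangle with this construction.
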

(Here $\csch$ refers to hyperbolic cosecant.) For a proof, see \Cref{ap:bound}.

\section{The recognition algorithm}

\begin{lemma}\label{lem:decide-commute}
  There exists an algorithm that decides whether a pair $g, h$ of commuting non-elliptic elements of $\PSL_2(\R)$ generates a discrete group, and if so outputs a generator of $\gen{g, h}$.
\end{lemma}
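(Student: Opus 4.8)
The plan is to reduce discreteness to a rationality test inside a one-parameter subgroup. Since $g$ and $h$ are nontrivial and commute, they have the same fixed-point set in $\Hyp \cup \partial\Hyp$ (as used in the proof of \Cref{lem:cyclic-transitive}). A parabolic element fixes exactly one boundary point and a hyperbolic element exactly two, so two commuting non-elliptic elements are either both parabolic with a common fixed boundary point or both hyperbolic with a common axis, and the mixed case cannot occur. In either case $\gen{g, h}$ is a torsion-free abelian group lying in the one-parameter subgroup of isometries determined by that fixed-point set, which I identify with $(\R, +)$.

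First I would treat the parabolic case. Conjugating so that the common fixed point is $\infty$, the elements $g$ and $h$ act as $z \mapsto z + \alpha$ and $z \mapsto z + \beta$ with $\alpha, \beta \neq 0$, and $g^a h^b \mapsto a\alpha + b\beta$ identifies $\gen{g, h}$ with the subgroup $\alpha\Z + \beta\Z$ of $\R$. This is discrete if and only if $\alpha/\beta \in \Q$. In the hyperbolic case, conjugating so that the axis has endpoints $0$ and $\infty$ makes $g$ and $h$ diagonal, acting as $z \mapsto \lambda^2 z$ and $z \mapsto \mu^2 z$ for eigenvalues $\lambda, \mu$ with $\abs{\lambda}, \abs{\mu} \neq 1$; taking logarithms identifies $\gen{g, h}$ with $(2\log\abs{\lambda})\Z + (2\log\abs{\mu})\Z \le \R$, which is discrete if and only if $\log\abs{\lambda}/\log\abs{\mu} \in \Q$, equivalently if and only if $\lambda$ and $\mu$ are multiplicatively dependent.

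Deciding discreteness therefore reduces to an arithmetic test on numbers lying in the field generated by the matrix entries. Over a real algebraic number field the parabolic test is immediate, since one can decide whether an algebraic number is rational; for the hyperbolic test I would invoke the decidability of multiplicative dependence among algebraic numbers, where the lattice of multiplicative relations is effectively computable. To output a generator when the group is discrete, I would write the relevant ratio as $p/q$ in lowest terms (so that $\alpha/\beta = p/q$, respectively $\abs{\lambda}^q = \abs{\mu}^p$), choose $u, v \in \Z$ with $up + vq = 1$ by Bézout, and return $g^u h^v$: in each case $g$ and $h$ have parameter $pc$ and $qc$ for the primitive parameter $c$ of $\gen{g, h}$, so $g^u h^v$ has parameter $c$ and generates.

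The main obstacle is the hyperbolic arithmetic step. Unlike the parabolic case, recognising whether $\log\abs{\lambda}/\log\abs{\mu}$ is rational is not a comparison of real numbers but the detection of a multiplicative relation among the algebraic eigenvalues; it is here that the algebraic number field hypothesis (or a suitably strong oracle) does the essential work, while the geometric normalisations require only the exact comparisons already assumed.
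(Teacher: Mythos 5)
Your proposal is mathematically correct, but it cannot be "the same as the paper's proof" because the paper does not prove this lemma at all: its entire proof is the citation ``See \cite[\textsection 3]{kirschmer}.'' What you have written is, in effect, a self-contained reconstruction of the outsourced argument, and it is the natural one: commuting nontrivial elements share a fixed-point set (the fact the paper itself uses in \Cref{lem:cyclic-transitive}), which rules out the parabolic/hyperbolic mixed case and places $\gen{g,h}$ inside a one-parameter subgroup isomorphic to $(\R,+)$; discreteness then becomes rationality of $\alpha/\beta$ in the parabolic case and multiplicative dependence of the eigenvalues in the hyperbolic case, and the B\'ezout combination $g^u h^v$ correctly produces a generator (both $g$ and $h$ are then powers of it, since the parametrisation of the one-parameter subgroup is injective). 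Two points deserve emphasis. First, the decidability of multiplicative dependence of algebraic numbers is itself a nontrivial result and needs a citation (effective bounds on the exponents in a multiplicative relation, or an algorithm computing the lattice of multiplicative relations); as stated it is an appeal to a black box, though a standard and correct one. Second, you rightly identify that this step is genuinely arithmetic rather than order-theoretic: no finite sequence of exact real-number comparisons can decide whether a ratio of reals is rational, so the lemma is an ``algorithm'' only under the algebraic-number-field hypothesis (or an oracle strictly stronger than the comparison oracle the paper assumes for real inputs). This is consistent with, and in fact explains, the paper's restriction of its exact implementation to real algebraic number fields in Section~\ref{sec:exact}.
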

\begin{proof}
  See \cite[\textsection 3]{kirschmer}.
\end{proof}

\begin{theorem}\label{recognition-torsion-free}
  Let $X$ be a finite subset of $\SL_2(\R)$ or $\PSL_2(\R)$. There exists an algorithm that decides whether the group $G$ generated by $X$ is discrete and torsion-free, and if so outputs a minimal generating set for $G$.
\end{theorem}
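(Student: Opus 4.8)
The plan is to reduce at once to the case $P = \PSL_2(\R)$, since the passage to $\SL_2(\R)$ is routine via \cite[Lemma 4.1]{eick}: a subgroup of $\SL_2(\R)$ is discrete and torsion-free exactly when it avoids $-I$ and maps isomorphically onto a discrete torsion-free subgroup of $\PSL_2(\R)$. For $\PSL_2(\R)$ the strategy is to transform $X$ into a \emph{reduced} generating set in the sense of \Cref{def:reduced}; once $X$ is reduced, \Cref{prop:reduced-is-discrete-free} and \Cref{cor:reduced-is-discrete} together certify that $G$ is discrete and torsion-free and that $X$ is a minimal generating set, so the algorithm returns $X$. The heart of the algorithm is thus a loop that applies Nielsen transformations to drive $X$ towards reducedness.

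Concretely, at each iteration I would first discard redundant inverse generators to enforce A1, then enumerate the finitely many short words (finite by \Cref{prop:leftmost-repeats}) and invoke \Cref{prop:indiscrete-reduce-bound} with the threshold $\e = \Phi^{-1}(1/\Phi(M))$, where $M = \max_{x \in X}\abs{x}$. The four alternatives of that proposition dictate the branches. If some short word is elliptic (a failure of A2), then $G$ contains an elliptic element and so is not torsion-free, and I return \textsc{no}. If some short word is a good replacement for one of its letters (a failure of A3), I apply the replacement maximising $\abs{x} - \abs{g}$; by \Cref{prop:reduced-set-generates} the new set still generates $G$, and this strictly decreases the potential $\Sigma(X) = \sum_{x \in X}\abs{x}$. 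The remaining symptom, a short word of displacement below $\e$, is treated below. If none of these three symptoms appears, then A1, A2, A3 all hold, so $X$ is reduced and I return it.

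The delicate branch is a short \emph{non-elliptic} word $h$ with $\abs{h} < \e$. Here I would test, using \Cref{lem:decide-commute}, whether $h$ commutes with each generator. If $h$ fails to commute with some $x \in X$, then $\abs{h} < \e$ and $\abs{x} \le M$ give $\Phi(\abs{h})\Phi(\abs{x}) < \Phi(\e)\Phi(M) = 1$, so the contrapositive of \Cref{prop:collar} shows $\gen{h, x}$ is not discrete and torsion-free, being non-abelian; hence neither is $G$, and I return \textsc{no}. If instead $h$ commutes with every generator, then each generator shares the fixed-point set of the non-elliptic $h$ (no generator can swap its endpoints, as that would be an elliptic short word already excluded), so $G$ is abelian and torsion-free; I then decide its discreteness directly by repeated application of \Cref{lem:decide-commute}, returning either a single cyclic generator or \textsc{no}.

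The main obstacle is termination. Each pass that performs a replacement strictly decreases $\Sigma(X)$, and I claim it does so by a uniform amount. Whenever a replacement is performed, cases (2) and (3) of \Cref{prop:indiscrete-reduce-bound} do not apply, so either $G$ is already discrete and torsion-free, or case (4) guarantees a good replacement with $\abs{x} - \abs{g} \ge \delta(\abs{x}, \e) > 0$. In the latter case the affected letter satisfies $\e \le \abs{x} \le M$ (it is itself a short word, and none lies below $\e$); since $M$ is non-increasing and $\e$ non-decreasing along the loop, the pair $(\abs{x}, \e)$ ranges over a fixed compact set on which the continuous, positive function $\delta$ attains a positive minimum $\delta_0$, so the maximising choice lowers $\Sigma$ by at least $\delta_0$, bounding the number of replacements by $\Sigma(X)/\delta_0$. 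In the former case $G$ is discrete, so $Gv$ is a discrete orbit, the displacements take only finitely many values below $M$, and the strictly decreasing $\Sigma$ can change only finitely often. In either case the loop halts, which completes the algorithm.
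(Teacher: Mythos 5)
Your overall architecture tracks the paper's Algorithm~\ref{alg:free} closely: drive $X$ toward reducedness by good replacements, certify failure with the collar inequality (\Cref{prop:collar}), certify success with \Cref{prop:reduced-is-discrete-free} and \Cref{cor:reduced-is-discrete}, and get termination from \Cref{prop:indiscrete-reduce-bound}. But there is a genuine gap in your ``delicate branch'': you never consider that a short word can be \emph{trivial in $G$}. If $h$ is a short word with $h = 1$ in $G$, then $\abs{h} = 0 < \e$ and $h$ commutes with every generator vacuously, so your inference that ``each generator shares the fixed-point set of the non-elliptic $h$, hence $G$ is abelian'' collapses --- the identity has no fixed-point set to share. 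This is not a corner case; it is exactly what happens for every cocompact example. Take a genus-$2$ surface group with standard generators $a,b,c,d$: once the generating set is reduced, the word read along the boundary of the compact fundamental polygon (a rearrangement of the relator $[a,b][c,d]$; cf.\ \Cref{lem:half-loop-reduces}) is a short word that is trivial in $G$, hence of displacement $0 < \e$, and for such a group \emph{every} short word of displacement below $\e$ is trivial in $G$ (a nontrivial one would contradict \Cref{prop:collar} inside a discrete torsion-free group). Your algorithm therefore enters the delicate branch, finds that this word commutes with everything, declares $G$ abelian, and returns either a cyclic generator or \textsc{no} --- both wrong, since $G$ is discrete, torsion-free, and non-abelian. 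The paper guards against precisely this with the clause ``and $g$ is non-trivial in $G$'' in Algorithm~\ref{alg:free}: a small-displacement short word is used as a failure certificate only when it is a nontrivial group element, and otherwise the algorithm falls through to the replacement test and correctly returns \texttt{True} on reduced cocompact sets.

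A second, related slip concerns termination. Your branch order performs good replacements \emph{before} testing for short words of displacement below $\e$, yet your termination argument asserts that ``whenever a replacement is performed, cases (2) and (3) of \Cref{prop:indiscrete-reduce-bound} do not apply.'' With your ordering, case (3) may well hold at replacement time, and then the proposition promises nothing about case (4), so there is no uniform lower bound $\delta_0$ on the decrease of $\Sigma$ and termination is unproven. Reordering the tests repairs this, but then the trivial-word problem above triggers immediately on surface groups, so the two issues must be fixed together, exactly as the paper does: treat a below-$\e$ short word as evidence only when it is nontrivial in $G$; then, whenever a replacement is actually performed, neither (2) nor (3) applies in the relevant sense and the decrement bound of case (4) is available. (A further minor point: an elliptic short word need not have finite order, so it does not directly show $G$ has torsion; the correct conclusion, which still justifies returning \textsc{no}, is that $G$ cannot be both discrete and torsion-free.)
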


\afterpage{%
\vspace*{\fill}
\begin{algorithm}
  \KwData{Finite subset $X$ of $\PSL_2(\R)$}
  \KwOut{Either (\texttt{False}, $g$) where $g$ is elliptic; (\texttt{False}, ($g$, $h$)) where $\gen{g, h}$ is indiscrete; or (\texttt{True}, $X'$) where $G = \gen{X}$ is discrete and torsion-free and $X'$ is a reduced generating set of $G$}
  $X' \gets X$ \;
  \Loop{
    \uIf{there exists $x \in X'$ such that $x^{-1} \in X'$}{
      $X' \gets X' \setminus \{x\}$ \;
      \Continue\;
    }
    \uElseIf{$\abs{X'} = 0$}{
      \Return (\texttt{True}, $\varnothing$)\;
    }
    \ElseIf{$\abs{X'} = 1$}{
      \uIf{$x \in X'$ is elliptic}{\Return (\texttt{False}, $g$)\;}
      \Else{\Return (\texttt{True}, $X'$)\;}
    }
    let $a, b \in X'$ be distinct such that $\abs{a} \leq \abs{b} \leq \abs{x}$ for all $x \in X' \setminus \{a, b\}$\;
    \uIf{$\gen{a, b}$ is abelian and indiscrete}{
      \Return (\texttt{False}, $\{a, b\}$)\;
    }
    \uElseIf{$\gen{a, b} = \gen{c}$ for some $c$}{
      $X' \gets X' \setminus \{a, b\} \cup \{c\}$\;
    }
    \uElseIf{there exists an elliptic short word $g$ of $X'$}{
      \Return (\texttt{False}, $g$)\;
    }
    \Else{
      let $g$ be a short word in $X'$ such that $\abs{g}$ is minimal\;
      \If{$\Phi(\abs{g})\Phi(\abs{b}) < 1$ and $g$ is non-trivial in $G$}{
        choose $c \in \{a, b\}$ such that $cg \neq gc$\;
        \Return (\texttt{False}, $\{c, g\}$)\;
      }
      $S \gets \Set{(x, h) \given x \in X',\ h \text{ is a short word in } X'\text{ reducing } x}$\;
      \uIf{$S \neq \varnothing$}{
        let $(x, h) \in S$ be such that $\abs{x} - \abs{h}$ is minimal\;
        $X' \gets X' \setminus \{x\} \cup \{h\}$\;
      }
      \Else{
        \Return (\texttt{True}, $X'$)\;
      }
    }
  }
  \caption{Decide whether a subgroup of $\PSL_2(\R)$ is discrete and torsion-free}\label{alg:free}
\end{algorithm}
\vspace*{\fill}
\pagebreak}

\Cref{alg:free} establishes \Cref{recognition-torsion-free}. Each iteration either applies a single transformation to $X$ or returns \texttt{True} or \texttt{False}. We outline the procedure run at each iteration.
\begin{enumerate}
  \item If $X$ violates A1, then remove an element of $X \cap X^-$ from $X$.
  \item Handle $\abs{X} \le 1$ as a special case.
  \item Let $a$ and $b$ be the elements of $X$ with smallest and second-smallest (possibly equal) displacement respectively. If $\gen{a, b}$ is abelian and indiscrete, then return \texttt{False}. If $\gen{a, b}$ is cyclic, then replace $a$ and $b$ with a single generator. 
  \item If $X$ violates A2, then return \texttt{False}.
  \item Find a short word $g$ in $X$ such that $\abs{g}$ is minimal. If $\Phi(\abs{g})\Phi(\abs{b}) < 1$, then return \texttt{False}.
  \item If no short word in $X$ replaces a generator, then return \texttt{True}. Otherwise, find a short word $h$ in $X$ reducing some term $x$ such that $\abs{x} - \abs{h}$ is minimal. Replace $x$ in $X$ by $h$. 
\end{enumerate}
These steps are repeated until the algorithm returns. We now prove that this algorithm is correct and terminates.

\begin{proof}
  We consider the possible results of each transformation. Let $X_i$ be the value assigned to $X'$ during the $i$-th iteration of the loop. We may assume that $\abs{X_i} \geq 2$. If $X_i \cap X_i^{-1} \neq \varnothing$, then $\abs{X_{i+1}} < \abs{X_i}$.
  Let $a_i, b_i \in X_i$ be such that $\abs{a_i} \leq \abs{b_i} \leq \abs{x}$ for all $x \in X_i$.
  If $\Phi(\abs{a_i})\Phi(\abs{b_i}) < 1$ then, by \Cref{cor:non-commuting-bound}, either $H = \gen{a_i, b_i}$ is indiscrete or $a_i$ and $b_i$ commute. By \Cref{lem:decide-commute}, we can decide whether $H$ is discrete and, if so, find $c_i \in G$ generating $H$. We thus replace $a_i$ and $b_i$ by $c_i$, in which case $\abs{X_{i+1}} < \abs{X_i}$.
  
  We may therefore assume that $a_i$ and $b_i$ do not commute. We may also suppose there is no short elliptic word of $X_i$.
  Suppose there is a short word $g$ such that $\abs{g} < \e_i$. Observe that $g$ does not commute with some $c \in \{a_i, b_i\}$, since $a_i$ and $b_i$ do not commute and therefore do not have the same set of eigenspaces. By \Cref{cor:non-commuting-bound}, $\gen{g, c}$ is indiscrete.
  
  If there is a good replacement $g$ for some $x_i \in X_i$, then we replace $x_i$ by $g$. It follows that $s_{i+1} < s_i$, where $s_i = \sum_{x \in X_i}\abs{x}$. If the algorithm terminates on some $X_n$ without returning an elliptic element or an indiscrete subgroup, then no short words in $X_n$ reduce a generator of $X_n$, so $X_n$ is reduced. By \Cref{prop:reduced-is-discrete-free} and \Cref{cor:reduced-is-discrete}, $X_n$ is discrete and torsion-free.
  
  We now show that the algorithm terminates. For a contradiction, suppose otherwise. For some $k \in \N$, it must be that $\abs{X_{i}}$ is constant for all $i > k$, so $X_{i+1}$ is obtained by replacing some $x_i \in X_i$ with a good replacement $g_i$ for $x_i$.

  Let $M_i$ and $m_i$ be the maximum and minimum respectively of $\Set{\abs{x} \given x \in X}$.
  Each $g_i$ satisfies $\abs{g_i} < M_i$. Since infinitely many of the $g_i$ are distinct it follows that $G$ is indiscrete.
  It also follows that $M_i \geq \abs{y} \geq m_i$ for each $y \in X_i$. Since $m_{i+1} \geq m_i$ for each $i$, we see that $m_i$ converges to some $m > 0$ and $\abs{y} \geq m$ for all $y \in X_j$ and $j \geq i$. Hence the sequence $(s_j)_{j \in \N}$ is decreasing and bounded below by $\abs{X_i}m$, and so converges. However, by \Cref{prop:indiscrete-reduce-bound},
  \begin{align*}
    \lim_{j \to \infty} (s_{j+1} - s_{j}) &\geq \lim_{j \to \infty} \min\Set{\abs{x} - \abs{g} \given g\text{ is a good replacement for }x \in X_j} \\
    &\geq \lim_{j \to \infty} \min_{x \in X_j}\delta(m, \abs{x}) \\
    &> 0.
  \end{align*}
  By contradiction, the algorithm must terminate.
\end{proof}

\subsection{Performing exact computations}\label{sec:exact}
\Cref{alg:free} assumes that it is possible to perform exact operations and comparisons with real numbers (for example, by an oracle that can decide whether $x < y$ for real numbers $x$ and $y$). We show that the only operations needed are field operations; in particular, if the entries are algebraic, then the algorithm can be performed with exact precision. We use the following statement from \cite[Theorem 7.2.1]{beardon}.

\begin{theorem}\label{thm:tanh-dist}
  If $w$ and $z$ are complex numbers and $d$ is the distance metric in the upper half plane model of $\Hyp$, then
  \[
    \tanh\left(\frac{d(w, z)}{2}\right) = \abs*{\frac{z-w}{z-\overline{w}}},
  \]
  where $\overline{w}$ is the complex conjugate of $w$.
\end{theorem}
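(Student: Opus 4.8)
The plan is to exploit the invariance of both sides under the isometry group $\PSL_2(\R)$ of the upper half-plane model, reducing the identity to a single easily-computed configuration. The left-hand side is invariant because $\PSL_2(\R)$ acts on $\Hyp$ by isometries, so it suffices to establish invariance of the right-hand side and then verify the formula for one convenient pair of points.

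First I would prove the \emph{cocycle identity}: for $g(z) = (az+b)/(cz+d)$ with $a, b, c, d \in \R$ and $ad - bc = 1$, a direct expansion of the numerator gives
\[
  g(z) - g(w) = \frac{(ad-bc)(z-w)}{(cz+d)(cw+d)} = \frac{z-w}{(cz+d)(cw+d)}.
\]
Since the coefficients are real, $\overline{g(w)} = (a\overline{w}+b)/(c\overline{w}+d)$, so applying the same identity with $w$ replaced by $\overline{w}$ yields $g(z) - \overline{g(w)} = (z-\overline{w})/((cz+d)(c\overline{w}+d))$. Dividing these and using $\abs{c\overline{w}+d} = \abs{cw+d}$ (again because the coefficients are real) shows
\[
  \abs*{\frac{g(z)-g(w)}{g(z)-\overline{g(w)}}} = \abs*{\frac{z-w}{z-\overline{w}}},
\]
which is the desired $\PSL_2(\R)$-invariance of the right-hand side.

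Next I would reduce to a normal form. Since $\PSL_2(\R)$ acts transitively on $\Hyp$ and the stabiliser of $i$ acts transitively on geodesic directions through $i$, I can apply an isometry carrying $w$ to $i$ and the geodesic through $w$ and $z$ to the imaginary axis; thus it suffices to prove the identity when $w = i$ and $z = ti$ with $t \geq 1$. Along the imaginary axis the path-length metric $\abs{\dd z}/\im(z)$ integrates to $d(i, ti) = \int_1^t \dd y / y = \ln t$, and since $\overline{w} = -i$ we compute
\[
  \tanh\!\left(\frac{d(i,ti)}{2}\right) = \frac{e^{(\ln t)/2} - e^{-(\ln t)/2}}{e^{(\ln t)/2} + e^{-(\ln t)/2}} = \frac{t-1}{t+1} = \abs*{\frac{ti - i}{ti + i}} = \abs*{\frac{z-w}{z-\overline{w}}},
\]
establishing the theorem.

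The computations are all routine, and the only genuine content is the invariance of the right-hand side via the cocycle identity. The main point to handle carefully is the reduction step: one must justify that a single element of $\PSL_2(\R)$ can simultaneously send $w$ to $i$ and the geodesic through $w$ and $z$ to the imaginary axis. This rests on transitivity of the isometry group on pointed geodesic directions, together with the fact — immediate from the definition of the metric $\abs{\dd z}/\im(z)$ — that the imaginary axis is a geodesic realising $d(i, ti) = \abs{\ln t}$.
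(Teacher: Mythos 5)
Your proof is correct, and all three steps hold up: the cocycle identity computation is right (the numerator collapses to $(ad-bc)(z-w)$), the invariance of $\abs{(z-w)/(z-\overline{w})}$ under real Möbius transformations follows since $\abs{c\overline{w}+d} = \abs{cw+d}$, and the normal-form computation along the imaginary axis is routine. The one point you flag yourself — that the imaginary axis is a length-minimising geodesic, so $d(i,ti)=\ln t$ — does need the standard projection argument (any path from $i$ to $ti$ satisfies $\int \abs{\dd z}/\im z \geq \int \dd y / y$), but this is indeed elementary and you cite it appropriately.

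For comparison: the paper does not prove this statement at all; it simply quotes it as Theorem 7.2.1 of Beardon's book and uses it to reduce distance comparisons to field operations in Section \ref{sec:exact}. So your proposal supplies a proof where the paper defers to a reference. The argument you give — invariance of both sides under $\PSL_2(\R)$, then transitivity on pointed directions to reduce to $w=i$, $z=ti$ — is essentially the standard textbook derivation (and is in the spirit of Beardon's own treatment, which rests on the invariance of expressions like $\abs{z-w}^2/(\im z \, \im w)$ under Möbius maps followed by a computation in normalised position). In short: correct, complete modulo one routine geodesic fact you acknowledge, and it fills in exactly what the paper outsources to the literature.
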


Since $\tanh$ is monotonically increasing, $d(w, z) < d(w, z')$ if and only if
\[
  \abs*{\frac{z-w}{z-\overline{w}}}^2 < \abs*{\frac{z'-w}{z'-\overline{w}}}^2.
\]
\Cref{alg:free} also uses the real-valued function $\Phi$ to decide whether $\Phi(\abs{g})\Phi(\abs{h}) < 1$ for some $g, h \in \PSL_2(\R)$.

\begin{proposition}
  If $g, h \in \PSL_2(\R)$, then $\Phi(\abs{g})\Phi(\abs{h}) < 1$ if and only if
  \[
  \abs*{\frac{gv - v}{gv - \overline{v}}}^2 + \abs*{\frac{hv - v}{hv - \overline{v}}}^2 < 1.
  \]
\end{proposition}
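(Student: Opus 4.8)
The plan is to translate the displacement condition $\Phi(\abs{g})\Phi(\abs{h}) < 1$ into the stated inequality using \Cref{thm:tanh-dist}, after which everything reduces to a routine algebraic identity in two non-negative real variables. First I would apply \Cref{thm:tanh-dist} with $w = v$ and $z = gv$, recalling that $\abs{g} = d(v, gv)$, to obtain $\abs*{\frac{gv - v}{gv - \overline{v}}} = \tanh(\abs{g}/2)$, and likewise $\abs*{\frac{hv - v}{hv - \overline{v}}} = \tanh(\abs{h}/2)$. Hence the displayed inequality is equivalent to $\tanh^2(\abs{g}/2) + \tanh^2(\abs{h}/2) < 1$.

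Next I would introduce $s = \Phi(\abs{g}) = \sinh(\abs{g}/2)$ and $t = \Phi(\abs{h}) = \sinh(\abs{h}/2)$, both non-negative. Using the identity $\cosh^2 = 1 + \sinh^2$, we have $\tanh^2(\abs{g}/2) = s^2/(1+s^2)$ and $\tanh^2(\abs{h}/2) = t^2/(1+t^2)$. Multiplying the inequality $\frac{s^2}{1+s^2} + \frac{t^2}{1+t^2} < 1$ through by the positive quantity $(1+s^2)(1+t^2)$ gives $s^2 + t^2 + 2 s^2 t^2 < 1 + s^2 + t^2 + s^2 t^2$, which cancels to $s^2 t^2 < 1$. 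Since $s, t \geq 0$, this is equivalent to $st < 1$, that is, to $\Phi(\abs{g})\Phi(\abs{h}) < 1$, establishing the equivalence in both directions.

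There is no substantial obstacle here: the only point requiring care is the passage from $s^2 t^2 < 1$ to $st < 1$, which is valid precisely because $\Phi$ takes non-negative values on $[0, \infty)$, so that $s$ and $t$ are non-negative. The remainder is a direct cancellation, and the whole argument is purely computational once \Cref{thm:tanh-dist} has been invoked.
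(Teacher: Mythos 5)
Your proof is correct and takes essentially the same route as the paper: both invoke \Cref{thm:tanh-dist} to reduce the displayed inequality to $\tanh^2(\abs{g}/2)+\tanh^2(\abs{h}/2)<1$, and then use elementary hyperbolic identities to show this is equivalent to $\sinh(\abs{g}/2)\sinh(\abs{h}/2)<1$. The only difference is bookkeeping: the paper argues via $\sech$/$\csch$ reciprocals (implicitly assuming $\abs{h}\neq 0$), whereas you clear denominators against $(1+s^2)(1+t^2)$, which handles the degenerate case $\abs{h}=0$ uniformly.
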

\begin{proof}
  Let $x = \abs{g}/2$, $y = \abs{h}/2$. Then
  \[
    \tanh^2(y) = \frac{1}{1 + \csch^2(y)}
  \]
  and
  \[
    1 - \tanh^2(x) = \sech^2(x) = \frac{1}{1+\sinh^2(x)}.
  \]
  Additionally, $\Phi(\abs{g})\Phi(\abs{h}) < 1$ if and only if $\sinh(x) < \csch(y)$. This holds if and only if
  \[
    \frac{1}{1+\sinh^2(x)} > \frac{1}{1+\csch^2(y)},
  \]
  which in turn holds if and only if $\tanh^2(x) + \tanh^2(y) < 1$. Applying \Cref{thm:tanh-dist} completes the proof.
\end{proof}

As these are the only real computations performed in \Cref{alg:free}, the algorithm runs with exact precision over a real algebraic number field.

\section{Properties of reduced sets}

From a reduced subset $X$ of $\PSL_2(\R)$, we can determine various geometric invariants of $G = \gen{X}$.

\subsection{Bounding polygons}
Let $P$ be a leftmost path. The set $\Hyp \setminus \phi(P)$ has two connected components. One is an infinite-sided polygon that has as its interior angles the clockwise angle around each vertex of $P$; the closure of this component is the \emph{bounding polygon} of $T$ with boundary $P$. Note that a bounding polygon with boundary $P$ is isometric in $G$ to a bounding polygon stabilised by the principal word of $P$.

When convenient, we identify a bounding polygon with its closure in $\Hyp \cap \partial \Hyp$. A bounding polygon $D$ intersects $\partial\Hyp$ at an arc $\sigma$ (which is possibly a single point). If $g$ stabilises $D$, then it also stabilises $\sigma$, so the endpoints of $\sigma$ are the limit points of $g$.

\begin{proposition}\label{bounding-polygon}
  Suppose $X$ is reduced. Every connected component of $\Hyp \setminus \phi(T)$ is the interior of a bounding polygon of $T$. Conversely, every bounding polygon of $T$ is the closure of a connected component of $\Hyp \setminus \phi(T)$.
\end{proposition}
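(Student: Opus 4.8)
The plan is to establish the two directions separately, using the fact that $X$ reduced forces $\phi$ to be injective (via \Cref{prop:reduced-is-discrete-free} and \Cref{cor:reduced-is-discrete}, which together give that $G$ is discrete and torsion-free, so $T$ has no self-intersections except in the cocompact case, which I would treat as a degenerate subcase where $\phi(T)$ already tiles $\Hyp$). Since $\phi$ is an embedding, $\phi(T)$ is a locally finite graph properly embedded in $\Hyp$, and the connected components of $\Hyp \setminus \phi(T)$ are genuine open regions whose boundaries are subgraphs of $\phi(T)$. The central observation I would exploit is the geometric interpretation of leftmost paths already recorded after the definition: a leftmost path ``turns left by the minimum interior angle'' at each vertex. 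This is exactly the local rule that traces the boundary of a face of a planar embedding, so the boundary of any complementary region, traversed with the region on a fixed side, is forced to be a leftmost (or rightmost) path.

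For the forward direction, let $C$ be a connected component of $\Hyp \setminus \phi(T)$. I would walk along $\partial C$ keeping $C$ on the left: at each vertex $\phi(g)v$, the next edge is the one making the smallest clockwise interior angle with the incoming edge, since any edge making a smaller angle would cut into $C$ and contradict that $C$ is a single component disjoint from $\phi(T)$. This is precisely the leftmost condition of the Definition preceding \Cref{fig:leftmost-path}. Because $\phi$ is injective and the edge set around each vertex is finite (given by the finite cyclic order $B$ on $X^\pm$), the walk is well-defined and its lift to $T$ is a bounding path $P$; then $C$ is the interior of the bounding polygon with boundary $P$, matching the definition of bounding polygon (the interior angles of the polygon are exactly the clockwise angles around $P$). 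For the converse, I would start from an arbitrary leftmost path $P$ and show its bounding polygon's interior lies in a single complementary component: the interior of the bounding polygon is by construction disjoint from $\phi(P)$, and I would argue it meets no other edge of $\phi(T)$, since any such edge would have to enter through $\phi(P)$, but the leftmost ``minimum-angle'' rule means no edge of $T$ emanates into the polygon side at any vertex of $P$. Combined with injectivity of $\phi$, this confines the interior to one component, and the previous paragraph identifies that component's boundary as a leftmost path isometric to $P$.

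The main obstacle I anticipate is making the ``no edge enters the polygon'' claim rigorous, i.e.\ verifying that the minimum-angle turning rule at the vertices of a leftmost path really does certify that the entire open polygon region avoids $\phi(T)$, not merely a neighborhood of each vertex. Locally this is immediate from the definition of $B$ and the leftmost condition, but globally one must rule out an edge of $\phi(T)$ that originates far from $P$ and crosses into the polygon's interior. Here I would lean on the injectivity of $\phi$ (no self-intersections) together with the Jordan curve theorem, which the paper grants us freely: any edge with one endpoint strictly inside the polygon and its interior path avoiding the vertices of $P$ would force a crossing of $\phi(P)$, contradicting that $\phi(P)$ is an embedded arc disjoint from the interiors of other edges. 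The cocompact case requires a separate but brief remark, since there $\Hyp \setminus \phi(T)$ consists of compact polygonal faces rather than infinite-sided regions, but the same leftmost-boundary argument applies verbatim with finite $n$ in \Cref{prop:leftmost-repeats}.
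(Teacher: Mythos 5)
Your proposal is correct and follows essentially the same route as the paper: both directions rest on the absence of self-intersections of $T$ (with the cocompact case split off via \Cref{lem:half-loop-reduces}), the observation that an edge of $T$ entering a bounding polygon's interior must either cross $\phi(P)$ or emanate from a vertex of $P$ in violation of the minimum-angle (leftmost) rule, and conversely that the clockwise boundary walk of a complementary component is forced to be leftmost because the leftmost continuation cannot enter the component. The paper makes your anticipated ``global'' step rigorous exactly as you suggest, by connecting an interior point $a \in T \cap D^\circ$ through $T$ to a boundary vertex $g_i$ and deriving a contradiction with the cyclic order $B$ at $g_i$.
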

\begin{proof}
  Let $D$ be a bounding polygon of $T$ with boundary $P = (g_i)_{i \in \Z}$. Suppose for a contradiction that there exists some $a \in T \cap D^\circ$, where $D^\circ$ is the interior of $D$. Since $X$ is reduced, two cases occur. If the action of $G$ on $\Hyp$ is cocompact then, by \Cref{lem:half-loop-reduces}, every bounding polygon is a coset of $D$ and a connected component of $\Hyp \setminus \phi(T)$. Otherwise assume that the action of $G$ on $\Hyp$ is not cocompact. As in the proof of \Cref{prop:reduced-is-discrete-free}, $T$ has no self-intersections. Hence there is some $i \in \Z$ such that the image of $[a, g_i]_T \setminus \{g_i\}$ under $\phi$ is contained in $D^\circ$. Let $h$ be the vertex adjacent to $g_i$ in the minimal path containing $[a, g_i]_T$. Then $B(g_i^{-1}g_{i-1}, g_i^{-1}h, g_i^{-1}g_{i+1})$ holds. But $P$ is leftmost, a contradiction.
  
  Conversely, let $D$ be a connected component of $\Hyp \setminus \phi(T)$. Let $P$ be the path bounding $D$, such that the vertex sequence of $P$ is in clockwise order. Given an edge $e$ of $P$, the edge of $\lmost(e)$ adjacent to $e$ either coincides with $P$ or intersects $D^\circ$. The latter cannot hold, so $P$ is a leftmost path.
\end{proof}

The bounding polygons are in 1-1 correspondence with the set of conjugates of principal words. In this way, we can enumerate the bounding polygons of $C$.

\subsection{The Nielsen region}
The set $\partial\Hyp$ is the disjoint union of the limit set of $G$ and a countable union of pairwise disjoint open arcs $\sigma_i$. Let $L_i$ be the geodesic whose boundary points are the endpoints of $\sigma_i$. The \emph{Nielsen region} $N$ is the open region of $\Hyp$ bounded by the set of all $L_i$ \cite[\textsection 8.5]{beardon}. Equivalently, $N$ is the smallest $G$-invariant open convex subset of $\Hyp$ \cite[Theorem 8.5.2]{beardon}.

\Cref{bounding-polygon} shows that each $\sigma_i$ lies in a bounding polygon. Given a bounding polygon with principal word $g$, the endpoints of the associated $\sigma_i$ is the limit set of $g$. Therefore each $L_i$ is determined by the limit set of a conjugate of a principal word; thus we can algorithmically list the sides of the Nielsen region.

\subsection{The signature}
Suppose $G$ is discrete with genus $g$. An \emph{interval of discontinuity} is a maximal interval $\sigma \subset \partial\Hyp$ such that for all $g \in G$, either $g\sigma = \sigma$ or $g\sigma \cap \sigma$ has nonempty interior. A \emph{boundary} hyperbolic element of $G$ leaves some interval of discontinuity on $\partial\Hyp$ invariant. A cyclic subgroup of $G$ is \emph{elliptic}, \emph{parabolic}, or \emph{boundary hyperbolic} if its nontrivial elements have the respective property.

A finitely generated Fuchsian group has a finite number of conjugacy classes of maximal cyclic subgroups that are cyclic, parabolic, or boundary hyperbolic \cite[\textsection 10.3]{beardon}. If, of these conjugacy classes,
\begin{itemize}
  \item $r$ are elliptic with orders $m_1, \dots, m_r$;
  \item $s$ are parabolic; and
  \item $t$ are boundary hyperbolic;
\end{itemize}
then the tuple $(g: m_1, \dots, m_r \,;\, s \,;\, t)$ is the \emph{signature} of $G$ \cite[\textsection 10.4]{beardon}.

\begin{proposition}
  If $X$ is reduced, then every parabolic and boundary hyperbolic element of $G$ is conjugate to a principal word in $X$. Conversely, every principal word in $X$ is either parabolic or boundary hyperbolic.
\end{proposition}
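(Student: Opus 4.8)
The plan is to route everything through the bounding polygons of $T$ and the structure of the limit set. First I would record the two facts that make the element types tractable: since $X$ is reduced, $G$ is discrete and torsion-free by \Cref{prop:reduced-is-discrete-free} and \Cref{cor:reduced-is-discrete}, so $G$ contains no elliptic elements. In particular every principal word, being a nontrivial element of $G$, is parabolic or hyperbolic, which already disposes of the ``not elliptic'' content of the converse. The remaining work is to match the hyperbolic/parabolic dichotomy with the boundary behaviour of bounding polygons, using that the bounding polygons are exactly the closures of the connected components of $\Hyp \setminus \phi(T)$ (\Cref{bounding-polygon}) and are in one-to-one correspondence with the conjugates of principal words.

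For the converse I would argue as follows. Let $g$ be a principal word; by the above it is parabolic or hyperbolic, and it stabilises its bounding polygon $D$. Since $\phi(T)$ is $G$-invariant, $G$ permutes the connected components of $\Hyp \setminus \phi(T)$, so for any $c \in G$ either $cD = D$ or $cD \cap D = \varnothing$. Writing $\sigma = \overline{D} \cap \partial\Hyp$ for the boundary arc of $D$, whose endpoints are the fixed points of $g$, this dichotomy transfers to $\partial\Hyp$: either $c\sigma = \sigma$, or $c\sigma$ and $\sigma$ have disjoint interiors, because distinct components have boundary arcs with disjoint interiors. This is precisely the condition that $\sigma$ be an interval of discontinuity, so $g$ leaves $\sigma$ invariant; hence $g$ is boundary hyperbolic when it is hyperbolic, and parabolic otherwise.

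For the forward direction I would reverse the correspondence. Given a boundary hyperbolic $g$, it stabilises some interval of discontinuity $\sigma$; by the description of the Nielsen region, $\sigma$ is the boundary arc of a bounding polygon whose principal word $h$ has the endpoints of $\sigma$ as its fixed points. As $g$ is hyperbolic and preserves $\sigma$, it must fix both endpoints, so $g$ and $h$ share a fixed-point pair, commute, and lie in a common maximal cyclic subgroup generated by the primitive element $h$; conjugating $D$ to the polygon stabilised by a principal word then exhibits $g$ as conjugate to a power of a principal word (and to a principal word itself when $g$ generates its maximal cyclic subgroup). The parabolic case is the degenerate analogue: $g$ fixes a single point $p$, which is the single-point boundary arc of a bounding polygon with parabolic principal word fixing $p$, and the same fixed-point comparison applies.

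The main obstacle I anticipate is the geometric heart shared by both directions: proving that the boundary arc of a bounding polygon is genuinely an interval of discontinuity — that its interior meets no limit point of $G$ and that it is maximal — and, dually, that every interval of discontinuity and every parabolic fixed point is realised by a bounding polygon. This is exactly where reducedness is indispensable, via the absence of self-intersections of $\phi$ (as in the proof of \Cref{prop:reduced-is-discrete-free}) together with the classification of complementary components in \Cref{bounding-polygon}; these ensure that the complementary arcs of the limit set are in bijection with the bounding polygons, i.e.\ with the conjugates of principal words, which is what ultimately pins each parabolic or boundary hyperbolic element to a principal word.
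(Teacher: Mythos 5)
Your converse direction is sound and is essentially the paper's own argument: the arc at infinity of the bounding polygon stabilised by a principal word meets no limit point of $G$ in its interior (equivalently, its $G$-translates are equal or have disjoint interiors), so a non-parabolic principal word is boundary hyperbolic. The forward direction is where you diverge from the paper, and it contains a genuine gap. Your whole mechanism runs through intervals of discontinuity, i.e.\ the complementary arcs of the limit set; for a boundary hyperbolic $g$ this can be made to work (the interval $\sigma$ lies in a bounding polygon by the Nielsen-region discussion, its interior coincides with the interior of that polygon's arc by your converse argument, and the fixed-point matching plus cyclicity finishes). But a parabolic fixed point \emph{lies in} the limit set, so it is not contained in any interval of discontinuity, and the Nielsen-region correspondence you invoke says nothing about it. Your claim that a parabolic fixed point $p$ ``is the single-point boundary arc of a bounding polygon with parabolic principal word fixing $p$'' is precisely the nontrivial existence statement that has to be proved; you flag it as the main obstacle but never overcome it. This is exactly where the paper's proof does its real work: after conjugating $g$ to be cyclically reduced, it takes the bi-infinite path through $\Set{g^k \given k \in \Z}$, lets $D$ be the region it bounds on the side of the fixed-point set, and shows that any bounding polygon $D_1 \subseteq D$ (one exists by \Cref{bounding-polygon}) has principal word with the \emph{same} limit set as $g$ --- in the parabolic case because the arc of $D_1$ is pinched to the single point $\{p\}$, in the hyperbolic case via discontinuity on the interval and \cite[Theorem 5.1.2]{beardon} --- whence $D_1 = D$ is itself a bounding polygon stabilised by $g$. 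Without some version of this construction, the parabolic half of the forward direction is unproven.

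Two smaller points. First, even in the boundary hyperbolic case you only reach ``conjugate to a \emph{power} of a principal word,'' with conjugacy to a principal word itself contingent on $g$ being primitive; this caveat is in fact latent in the paper's proof as well (the statement is really about maximal cyclic subgroups, as its use in computing the signature shows), but as written your argument establishes a weaker statement than the one asked. Second, you assert without justification that the principal word $h$ generates a maximal cyclic subgroup; this does hold --- by \Cref{prop:leftmost-repeats} each element of $X^\pm$ occurs at most once in a principal word, so in the free (non-cocompact) case $h$ cannot be a proper power --- but it is a step that needs saying, not an assumption.
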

\begin{proof}
  Let $g$ be a parabolic or boundary hyperbolic element of $G$. Assume via conjugation that $g$ is cyclically reduced. By \Cref{bounding-polygon}, the bi-infinite path on $T$ containing the points $\Set{g^k \given k \in \Z}$ bounds a polygon $D$ in $\Hyp$ intersecting $\partial\Hyp$ on some arc $\sigma$. Let $D_1$ and $D_2$ be bounding polygons contained in $D$, with principal words $h_1$ and $h_2$ respectively. The limit points of $h_1$ and $h_2$ are subsets of $L$. If $g$ is parabolic, then $h_1$ and $h_2$ are also parabolic with the same limit point. If $g$ is hyperbolic then, since $G$ acts discontinuously on the interior of $\sigma$, the boundary points of $h_1$ and $h_2$ are also boundary points of $g$. By \cite[Theorem 5.1.2]{beardon}, the set of boundary points of $h_1$, $h_2$, and $g$ coincide. In either case $D_1 = D_2 = D$, hence $D$ is a bounding polygon and $g$ is conjugate to a principal word in $X$.
  
  Conversely, let $g$ be a principal word in $X$. Suppose $g$ is not parabolic. Then $g$ stabilises an arc $\sigma$ in $\partial\Hyp$ with non-empty interior. Since $\sigma$ is contained in the bounding polygon stabilised by $g$, the interior of $\sigma$ has no limit point of $G$, so $g$ is boundary hyperbolic.
\end{proof}
We can therefore compute the signature of a finitely generated torsion-free Fuchsian group by listing the conjugacy classes of principal words (for which a set of representatives is a subset of the principal words found in Algorithm \ref{alg:free}) and checking how many are parabolic.

\section{The constructive membership problem}\label{sec:membership}

Here we provide an algorithm to solve the constructive membership problem for finitely generated discrete torsion-free subgroups of $\SL_2(\R)$. We proceed similarly to \cite{markowitz}. Whereas in \Cref{alg:free} we apply a reduction algorithm to the generating set, here we keep the generating set fixed and instead apply a reduction algorithm to a given point of $\Hyp$.


\begin{lemma}\label{lem:intersection1}
  Suppose $X$ is reduced. Let $g$ be a principal word of some infinite bounding path $P$ of $T$. Let $a \in P$ be such that $[1, a]_T$ is not short. If $C$ is a simple curve in $\Hyp$ with endpoints $v$ and $\phi(a)$, then $C$ intersects either $gC$ or $[1, g]_T \setminus \{1\}$.
\end{lemma}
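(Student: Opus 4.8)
The plan is to argue topologically in $\Hyp$, treating $\Gamma := \phi(P)$ as an embedded line and $g$ as a translation along it. First I would normalise so that $P$ has first vertex $1$ (so $\phi(1)=v$) and its bi-infinite extension is stabilised by $g$, with the vertices $g^k$ occurring at signed arclength position $k\ell$ along $\Gamma$, where $\ell = \mu(1,g)$. Since $X$ is reduced, $G$ is torsion-free (\Cref{prop:reduced-is-discrete-free}) and $g$, being a principal word, is parabolic or boundary hyperbolic, hence non-elliptic. Using \Cref{bounding-polygon} I would record that $\Gamma$ is a properly embedded line separating $\Hyp$ into the bounding polygon $D$ and its complement $E$, both stabilised by $g$, and that $g$ acts on $\Gamma$ by shifting position by $+\ell$. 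The hypothesis that $[1,a]_T$ is not short says exactly that $\phi(a)$ sits at position $t := \mu(1,a) > \ell$, while $\phi([1,g]_T \setminus \{1\})$ is the set of points of $\Gamma$ at positions in $(0,\ell]$.

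Next I would assume for contradiction that $C \cap gC = \varnothing$ and that $C$ meets no point of $\Gamma$ at position in $(0,\ell]$, and extract from $C$ a single subarc that spans more than one period. Writing $C\colon[0,1]\to\Hyp$ with $C(0)=v$ and $C(1)=\phi(a)$, set $K = C^{-1}(\Gamma)$ and let $\psi\colon K \to \R$ record the position on $\Gamma$; then $\psi$ is continuous, $\psi(0)=0$, $\psi(1)=t>\ell$, and $\psi(K) \cap (0,\ell] = \varnothing$, so $\psi(K) \subseteq (-\infty,0]\cup(\ell,\infty)$. Put $c = \sup\{s \in K : \psi(s)\le 0\}$; this supremum is attained and $c<1$, and continuity of $\psi$ forces $C$ to leave $\Gamma$ immediately after $c$, so there is a maximal interval $(c,d)\subseteq[0,1]\setminus K$. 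Every $\Gamma$-touch after $c$ has positive position, hence position $>\ell$; thus the subarc $C' := \restr{C}{[c,d]}$ has its interior in a single open complementary region, meets $\Gamma$ only at its endpoints, and its endpoint positions satisfy $\psi(c)\le 0 < \ell < \psi(d)$, so $\psi(d)-\psi(c) > \ell$.

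Finally I would finish with the Jordan curve theorem. The translate $gC'$ lies in the same closed region $\overline E$ and has endpoints at positions $\psi(c)+\ell$ and $\psi(d)+\ell$; since $\psi(d)-\psi(c) > \ell$, the four positions interleave as $\psi(c) < \psi(c)+\ell < \psi(d) < \psi(d)+\ell$. But $C'$ and $gC'$ are disjoint simple arcs (as $C'\subseteq C$, $gC'\subseteq gC$, and $C\cap gC=\varnothing$) in the closed half-plane $\overline E$ whose endpoints are four interleaved points of $\partial E = \Gamma$; two such arcs must cross, a contradiction.

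The conceptual difficulty, and the step I expect to be the real obstacle, is that $C$ is an arbitrary simple curve and may cross $\Gamma$ many times at positions outside the first period $(0,\ell]$, whereas the conclusion only permits detecting a crossing inside that first period or with $gC$. The device that resolves this is the extraction of the period-spanning subarc $C'$: because $C$ runs from position $0$ to position $t>\ell$ while avoiding $(0,\ell]$, it must somewhere jump from the region of positions $\le 0$ to the region of positions $>\ell$ in a single excursion off $\Gamma$, and that excursion is exactly the arc to which the interleaving argument applies. A secondary point requiring care is the embeddedness of $\Gamma$: this is immediate in the free (non-cocompact) case, where reducedness yields no self-intersections of $T$, and in the cocompact case it follows from \Cref{lem:half-loop-reduces}, where the relevant bounding polygon is the compact fundamental domain.
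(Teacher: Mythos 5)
Your core argument (the non-cocompact case) is correct, and it is a genuinely different route from the paper's. The paper's proof assumes $C \cap gC = \varnothing$, notes via \Cref{lem:intersection0} that the translates $g^iC$ are then pairwise disjoint, collapses each translate to a point by a continuous $g$-equivariant quotient map $\psi$, and observes that $\psi \circ \phi$ turns $[1,a]_T$ into a curve meeting its $g^k$-translate; running the proof of \Cref{lem:minimal-loop} in the quotient produces a self-intersection which, since $[1,a]_T$ itself is embedded, can only come from two hits of a single translate of $C$, and this forces $C$ to meet $[1,g]_T \setminus \{1\}$. You instead use the separation structure directly: the bi-infinite bounding path embeds as a properly embedded $g$-invariant line $\Gamma$, you coordinatise $\Gamma$ by arclength so that $g$ acts by translation by $\ell = \mu(1,g)$, extract a single excursion arc $C'$ of $C$ jumping from position $\leq 0$ to position $> \ell$ while avoiding $\Gamma$ in between, and contradict disjointness of $C'$ and $gC'$ by the interleaved-endpoints form of the Jordan curve theorem in a common closed complementary region. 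Your version is more self-contained — no collapsing construction, no reuse of the internals of other proofs — at the price of needing embeddedness of $\Gamma$, which is exactly where a case split enters.

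That case split is where you have a genuine error. Your claim that embeddedness of $\Gamma$ in the cocompact case "follows from \Cref{lem:half-loop-reduces}" is false. In the cocompact case the principal word is trivial in $G$: the boundary word of the compact fundamental polygon fixes $v$, and since $G$ is torsion-free (\Cref{prop:reduced-is-discrete-free}, via A2) it must be the identity in $\PSL_2(\R)$. Consequently the bi-infinite bounding path is not embedded at all — $\phi$ wraps it around the boundary of the compact polygon infinitely many times — so your position function, the translation by $\ell$, and the separation of $\Hyp$ by $\Gamma$ all collapse. The repair is one line: if $g = 1$ in $\PSL_2(\R)$, then $gC = C$ and the conclusion holds trivially; equivalently, your contradiction hypothesis $C \cap gC = \varnothing$ already forces $g \neq 1$, which rules out the cocompact case (where every principal word is trivial), and in the non-cocompact case reducedness gives that $T$ has no self-intersections, so $\Gamma$ is embedded and your argument goes through as written. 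As submitted, though, the cocompact branch of your proof asserts something false rather than observing that the statement there is vacuous.
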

\begin{proof}
  Suppose $C$ does not intersect $gC$. If a segment $[1, b]_T$ of $P$ of nonzero length maps under $\phi$ to a subset of $C$, then the proof is complete, so suppose otherwise. We may assume (choosing $a$ and restricting $C$ if necessary) that $[1, a]_T$ intersects $C$ only at $1$ and $a$. Let $k \in \Z_{\geq 0}$ be maximal such that $g^k \in [1, a]_T \setminus \{a\}$.
  
  Let $\psi \colon \Hyp \to \Hyp$ be a continuous $g$-equivariant function that maps $g^iC$ to a unique point for all $i \in \Z$, and is injective on all other points. This map is well-defined since, by \Cref{lem:intersection0}, the curves of the form $g^kC$ are pairwise disjoint.
  
  Let $\gamma$ be the restriction of $\psi \circ \phi$ to $[1, a]_T$.
  Note that $\gamma$ intersects $g^k\gamma$. By the proof of \Cref{lem:minimal-loop}, $\gamma$ has a self-intersection at some points $x, y \in [1, a]_T$. Since $[1, a]_T$ cannot self-intersect, it must hold that $x$ and $y$ are both elements of $g^iC$ for some $i \in \Z$. However $[1, a]_T$ only intersects such curves twice, these curves being $C$ and $g^kC$. Therefore $k=0$ and $C$ intersects $[1, g]_T \setminus \{1\}$.
\end{proof}

\begin{lemma}\label{lem:shorter-from-intersection}
  Suppose $X$ is reduced. Let $w \in \Hyp$. If $[v, w] \setminus \{v\}$ intersects a bounding path $P$ starting from $v$ with principal word $g$, then there exists a subword $h$ of $g$ or $g^{-1}$ such that $d(hv, w) < d(v, w)$.
\end{lemma}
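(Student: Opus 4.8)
The plan is to first reduce to the case where the intersection occurs within one period of the principal word, and then to extract the required subword from a single crossed edge using the metric tricks of \Cref{lem:metric-trick,lem:metric-trick-2} together with property A3 of a reduced set. Write the vertices of $P$ as $p_0 = v, p_1, p_2, \dots$, where $p_j = \phi(g_j)$ and $g_0, \dots, g_n$ are the subwords giving one period (so $g_n$ represents $g$ and $g_{kn+j} = g^k g_j$). I would first let $u$ be the intersection point of $[v, w] \setminus \{v\}$ with $\phi(P)$ that is closest to $v$ along $[v, w]$, so that $[v, u]$ meets $\phi(P)$ only at $v$ and $u$; say $u$ lies on the edge $[p_i, p_{i+1}]$.

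For the \emph{reduction to the first period}: if $[1, g_{i+1}]_T$ is short then $p_i$ and $p_{i+1}$ are already subword vertices and I pass to the second step. Otherwise I set $a = g_{i+1}$ and form the simple curve $C = [v, u] \cup [u, \phi(a)]$ from $v$ to $\phi(a)$, where $[u, \phi(a)]$ is the remaining part of the crossed edge; $C$ is simple because $u$ is the first crossing. Since $[1, a]_T$ is not short, \Cref{lem:intersection1} shows that $C$ meets either $gC$ or $[1, g]_T \setminus \{1\}$. The arc $[u, \phi(a)]$ and its translate $g[u,\phi(a)]$ are disjoint sub-arcs of the simple bi-infinite path $\phi(\widehat P)$, so any intersection with $gC$ must involve the geodesic segment $[v, u]$, and I would argue that this forces a crossing of $[v, w]$ with $\phi(P)$ strictly nearer to $v$ than $u$, contradicting the choice of $u$. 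Hence $[v, u] \subset [v, w]$ meets $[1, g]_T \setminus \{1\}$, a crossing within the first period; replacing $u$ by this crossing I may assume $i + 1 \leq n$, so that $p_i = h_i v$ and $p_{i+1} = h_{i+1} v$ for subwords $h_i, h_{i+1}$ of $g$ (or, reversing $P$, of $g^{-1}$).

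For the \emph{extraction from a single edge}, I apply \Cref{lem:metric-trick} twice to the crossing of $[w, v]$ with the edge $[p_i, p_{i+1}]$ at $u$, once in each orientation of the edge. Either I obtain $d(w, p_{i+1}) < d(w, v)$ or $d(w, p_i) < d(w, v)$, in which case $h = h_{i+1}$ or $h = h_i$ is the required subword, or else both alternatives fail and $d(v, p_i) < d(p_i, p_{i+1})$ together with $d(v, p_{i+1}) < d(p_i, p_{i+1})$. The edge length $d(p_i, p_{i+1})$ equals $\abs{x}$, where $x \in X^\pm$ is the type of the crossed edge, and by \Cref{prop:leftmost-repeats} the letters of a first-period subword are distinct, so $x$ occurs exactly once in $h_{i+1}$. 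The failing alternative then yields $\abs{h_{i+1}} < \abs{x}$ with $h_{i+1}$ short, making $h_{i+1}$ a good replacement for the term $x$ occurring in it and contradicting A3 of \Cref{def:reduced}. Thus one of the first two alternatives must hold, which is the conclusion.

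I expect the \emph{main obstacle} to be the $g$-equivariant reduction: ruling out the $C \cap gC$ alternative of \Cref{lem:intersection1} by tracing any such intersection back to a crossing of $[v,w]$ strictly before $u$ is the step requiring the most care, and it is where the minimality of $u$ and the simplicity of $\phi(\widehat P)$ are essential. The remaining cases should be routine: the degenerate situation $i = 0$, and any configuration in which $v, p_i, p_{i+1}, u$ are collinear, are handled by \Cref{lem:collinear} and reducedness, while the cocompact case, in which $\phi$ need not be injective, is covered by \Cref{lem:half-loop-reduces}, since there $\phi(P)$ bounds a compact fundamental domain and the same edge-crossing analysis applies within one period.
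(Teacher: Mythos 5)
Your overall skeleton (first-crossing reduction via \Cref{lem:intersection1}, then extraction on a single edge via the metric tricks and A3) parallels the paper's, and your extraction step is sound: it is a minor variant of the paper's, which uses A3 once to get $d(v,h'v)\geq d(hv,h'v)$ and then \Cref{lem:metric-trick-2}, whereas you apply \Cref{lem:metric-trick} in both orientations and use A3 to kill the failure case; both land on the near vertex of the crossed edge. The genuine gap is in the step you yourself flagged as the main obstacle, and your proposed resolution of it is wrong. When $C = [v,u]\cup[u,\phi(a)]$ meets $gC$, the translate $gC$ has two pieces: $g[u,\phi(a)]$, which is a sub-arc of $\phi(\widehat P)$, and $g[v,u]=[gv,gu]$, which is \emph{not} contained in $\phi(\widehat P)$ at all --- it is the $g$-translate of part of the geodesic $[v,w]$. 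An intersection of $[v,u]$ with $[gv,gu]$ therefore produces no crossing of $[v,w]$ with $\phi(P)$ closer to $v$ than $u$, so the minimality of $u$ is not contradicted, and this sub-case cannot be ruled out: it is exactly the configuration drawn in \Cref{fig:shorter-from-intersection} (b). As written, your proof proves nothing in that sub-case.

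Moreover, this sub-case is not a nuisance to be eliminated; it is the sole source of the conclusion $h\in\{g,g^{-1}\}$ allowed by the statement. The paper handles it by applying \Cref{lem:metric-trick} to the crossing segments $[v,w]$ and $[gv,gw]$, obtaining either $d(gv,w)<d(gv,gw)=d(v,w)$ or $d(g^{-1}v,w)=d(v,gw)<d(v,w)$. Your argument, which (after the claimed contradiction) only ever outputs vertices of the first period, could never produce $h=g^{\pm 1}$; this is a structural sign that the reduction cannot be correct as stated. Inserting this case repairs the proof and makes it essentially the paper's (the paper case-splits on whether the minimal intersecting path is short rather than on where the first crossing lies, a cosmetic difference). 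Two smaller defects: \Cref{lem:collinear} concerns collinear triples $\{v,xv,yv\}$ of generator images and does not address your degenerate case $i=0$ (where $w$ may lie on the first edge); and in the cocompact case $\phi(\widehat P)$ is not simple, so your ``disjoint sub-arcs of the simple bi-infinite path'' assertion does not apply there and that case needs the separate treatment of \Cref{lem:half-loop-reduces} rather than ``the same edge-crossing analysis.''
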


\begin{figure}[t]
  \centering
  \begin{subfigure}[t]{0.5\textwidth}
    \centering
    \begin{tikzpicture}
      \tkzDefPoint(0,1){v1}
      \tkzDefPoint(0,0){v2}
      \tkzDefPoint(1,-1){v3}
      \tkzDefPoint(2,-1){v4}
      \tkzDefPoint(2.5,1){v5}
      
      \tkzDefPoint(3,0){w}
      \tkzInterLL(v1,w)(v4,v5) \tkzGetPoint{a}
      
      \tkzDrawPolySeg[edge](v1,v2,v3,v4,v5)
      \tkzDrawSegment[edge, dashed](v1,w)
      
      \tkzDrawPoint[vertex](v1) \tkzLabelPoint[left](v1){$v$}
      \tkzDrawPoint[vertex](v4) \tkzLabelPoint[right](v4){$h$}
      \tkzDrawPoint[vertex](v5) \tkzLabelPoint[right](v5){$h'$}
      
      \tkzDrawPoint[vertex](a) \tkzLabelPoint[below left](a){$\phi(a)$}
      \tkzDrawPoint[vertex](w) \tkzLabelPoint[right](w){$w$}
    \end{tikzpicture}
    \caption{$P$ is short}
  \end{subfigure}%
  \begin{subfigure}[t]{0.5\textwidth}
    \centering
    \begin{tikzpicture}
      \tkzDefPoint(0,1){v1}
      \tkzDefPoint(0,0){v2}
      \tkzDefPoint(1,-1){v3}
      \tkzDefPoint(2,-1){v4}
      \tkzDefPoint(2.5,1){v5}
      
      \tkzDefPoint(3,0){w}
      \tkzInterLL(v1,w)(v4,v5) \tkzGetPoint{a}
      
      \tkzDefPoint(1.5,1.8){ga}
      \tkzInterLL(v1,w)(v4,ga) \tkzGetPoint{p}
      
      \tkzDrawPolySeg[edge](v1,v2,v3,v4,v5)
      \tkzDrawSegment[edge, dashed](v1,w)
      \tkzDrawSegment[edge, dashed](v4,ga)
      
      \tkzDrawPoint[vertex](v1) \tkzLabelPoint[left](v1){$v$}
      \tkzDrawPoint[vertex](v4) \tkzLabelPoint[right](v4){$gv$}
      
      \tkzDrawPoint[vertex](a) \tkzLabelPoint[above right](a){$\phi(a)$}
      \tkzDrawPoint[vertex](w) \tkzLabelPoint[right](w){$w$}
      \tkzDrawPoint[vertex](ga) \tkzLabelPoint[above](ga){$gw$}
      \tkzDrawPoint[vertex](p) \tkzLabelPoint[below left](p){$p$}
    \end{tikzpicture}
    \caption{$P$ is not short}
  \end{subfigure}
  \caption{Diagrams for \Cref{lem:shorter-from-intersection}}
  \label{fig:shorter-from-intersection}
\end{figure}

\begin{proof}
  Assume $P$ is minimal. Let $a \in P \cap [v, w] \setminus \{1\}$ be such that $[1, a]_T \cup [v, \phi(a)]$ is the boundary of a simply connected closed subset $D$ of $\Hyp$. Suppose $P$ is short, as in \Cref{fig:shorter-from-intersection} (a). Let $e$ be the edge of $P$ containing $a$. Let $x$ be the type $e$, and let $\{h, h'\}$ be the vertices of $e$ so that exactly one term of $h$ is either $x$ or $x^{-1}$. If $w = h'v$, then $d(h'v, w) < d(v, w)$ as desired, so suppose otherwise. From A3 it follows that $d(v, h'v) \geq d(v, xv) = d(hv, h'v)$ so, by \Cref{lem:metric-trick}, $d(w, hv) < d(w, v)$.
  
  Now suppose $P$ is not short. By \Cref{lem:intersection1}, $[v, w]$ intersects either $[gv, gw]$ or $[1, g]_T$. The latter contradicts minimality of $P$, so the former holds, as shown in \Cref{fig:shorter-from-intersection} (b). By \Cref{lem:metric-trick}, either $d(gv, w) < d(v, w)$ or $d(g^{-1}v, w) = d(v, gw) < d(gv, gw) = d(v, w)$.
\end{proof}

\begin{proposition}\label{prop:short-words-closer}
  Suppose $X$ is reduced. Let $w \in \Hyp$ and let $g$ be a reduced word in $X$.
  \begin{enumerate}
    \item If $d(gv, w) < d(v, w)$, then there exists a short word $h$ such that $d(hv, w) < d(v, w)$.
    \item If $g$ is not short and $d(gv, w) \leq d(g'v, w)$ for all $g' \in G$, then $d(gv, w) < d(v, w)$.
  \end{enumerate}
\end{proposition}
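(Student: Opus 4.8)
The plan is to reduce both parts to \Cref{lem:shorter-from-intersection}, together with the observation that, since $X$ is reduced, $\phi(T)$ is an embedded tree (this is the non-cocompact case; the cocompact case of \Cref{lem:half-loop-reduces} is a finite check) whose complementary regions are the bounding polygons of \Cref{bounding-polygon}. The governing principle I would establish first is that $v$ is a closest orbit point to $w$ if and only if no short word $h$ satisfies $d(hv,w)<d(v,w)$. One direction is immediate, since $v$ being closest means $d(v,w)\le d(hv,w)$ for every word $h$. The reverse direction, argued by contraposition, is exactly part (1). A small bridging observation I would record is that $[v,w]\setminus\{v\}$ meets $\phi(T)$ if and only if it meets a bounding path starting at $v$: taking the first intersection point $p$, the segment $[v,p)$ lies in the interior of the bounding polygon first entered at $v$, and $p$ lies on one of the two rays from $v$ of that polygon's boundary.

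For part (1) I would assume $d(gv,w)<d(v,w)$, so $v$ is not a closest orbit point. If $[v,w]\setminus\{v\}$ meets $\phi(T)$, then by the bridging observation it meets a bounding path starting at $v$, and \Cref{lem:shorter-from-intersection} produces a subword $h$ of the associated principal word --- hence a short word --- with $d(hv,w)<d(v,w)$. Otherwise $[v,w)$ lies in the interior of a single bounding polygon $D$ touching $v$, so $w\in\overline{D}$ and the nearest point of $\phi(T)$ to $w$ lies on $\partial D$. I would then take $u$ to be the nearest vertex of $\partial D$ to $w$; since $v$ is not closest, $d(u,w)<d(v,w)$, and I would show that $u$ is joined to $v$ by a subarc of $\partial D$ of length at most that of the principal word, so that the word carrying $v$ to $u$ is short and serves as $h$. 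The length-and-occurrence bookkeeping needed to see this word is a genuine subword of a principal word is routine from \Cref{prop:leftmost-repeats}.

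For part (2) I would suppose $g$ is not short, $gv$ is a closest orbit point, and, for contradiction, $d(gv,w)=d(v,w)$, so $v$ is also closest. By the principle above neither $[v,w]\setminus\{v\}$ nor $[gv,w]\setminus\{gv\}$ meets $\phi(T)$, whence $w\in\overline{D}\cap\overline{D'}$ for bounding polygons $D\ni v$ and $D'\ni gv$, with the open geodesics lying in their interiors. Since bounding polygons have disjoint interiors, either $w\in\partial D\cap\partial D'\subseteq\phi(T)$ --- in which case $w$ lies on a single edge whose only candidate nearest vertices are its two endpoints, forcing $g\in X^\pm$ and hence short --- or $D=D'$, so that $v$ and $gv$ both lie on $\partial D$. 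In the latter case I would show that two vertices of $\partial D$ that are simultaneously nearest to a common point of $\overline{D}$ must be joined by a subarc of $\partial D$ of length at most that of the principal word, again making $g$ a subword of a principal word and contradicting non-shortness.

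The main obstacle is the same in both parts: controlling a bounding polygon enough to guarantee that nearest vertices are reached by short words, that is, that one cannot have a vertex (or a tied pair of vertices) of $\partial D$ nearest to $w$ while lying more than one principal word away along $\partial D$. I expect to prove this by exploiting reducedness axiom A3 through \Cref{lem:metric-trick} and \Cref{lem:metric-trick-2}: if an intermediate vertex of such a long subarc fails to be strictly closer to $w$, the resulting configuration yields a good replacement for one of the edge types of $\partial D$, contradicting A3. This ``bounding paths are taut'' estimate, which substitutes for the convexity one would have in a genuinely convex tessellation, is the crux; once it is in place, both parts follow from the reduction to \Cref{lem:shorter-from-intersection} described above.
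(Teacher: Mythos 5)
Your reduction of the easy half to \Cref{lem:shorter-from-intersection} via the bridging observation is sound, and up to that point your skeleton matches the paper's. The genuine gap is exactly the step you yourself flag as the crux: the ``taut'' estimate that a nearest vertex (or a tied pair of nearest vertices) of $\partial D$ to a point of $\overline{D}$ lies at most one principal word away along $\partial D$. This is left unproven, and the strategy you propose for it --- axiom A3 combined with \Cref{lem:metric-trick} and \Cref{lem:metric-trick-2} --- cannot deliver it, because those two lemmas only convert an \emph{already established} intersection of two geodesic segments into a distance inequality; they do not produce the intersection. Producing it is the topological heart of the problem: a bounding polygon is in general not convex, so geodesics from $w$ to vertices of $\partial D$ need not stay in $\overline{D}$, and no pair of relevant segments is forced to cross by elementary plane geometry. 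This is precisely why the paper proves \Cref{lem:intersection0} and \Cref{lem:intersection1} (the level-curve and collapsing-map arguments): in the case where $v$ and $gv$ lie on a common bounding polygon, the paper applies \Cref{lem:intersection1} to the simple curve $[v,w]\cup[w,gv]$ to conclude that it crosses either its translate under the principal word $\sigma$ or $[1,\sigma]_T$ (the latter being impossible since $g$ is not short), and only \emph{then} feeds the resulting crossings into \Cref{lem:metric-trick} to obtain either a closer short word $\sigma^{\pm1}$ or a contradiction with minimality of $g$. Your outline never invokes \Cref{lem:intersection1}, and without it (or an equivalent crossing statement) the same-polygon case --- which is where all the difficulty of both parts resides --- remains open.

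There is also a secondary unjustified step in part (1): in the case where $[v,w]\setminus\{v\}$ misses $\phi(T)$, you take $u$ to be the nearest vertex of $\partial D$ to $w$ and assert $d(u,w)<d(v,w)$ ``since $v$ is not closest.'' As written this presupposes that the orbit point nearest to $w$ lies on $\partial D$; a priori it could lie outside $\overline{D}$, and the fact that the geodesic to it crosses an edge of $\partial D$ only gives, via \Cref{lem:metric-trick}, an alternative in which no vertex of $\partial D$ is shown to beat $v$. This one is patchable with your own tools: take $g_0$ realising the minimum over $G$ (it exists by discreteness, \Cref{cor:reduced-is-discrete}), and apply the bridging observation and \Cref{lem:shorter-from-intersection} \emph{at $g_0v$} to conclude $[g_0v,w]$ cannot cross $\phi(T)$, forcing $g_0v\in\partial D$. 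But note that this patch is essentially the paper's move of working with the global minimiser from the outset, and it still leaves you facing the unproven taut estimate to finish.
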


\begin{proof}
  \Cref{cor:reduced-is-discrete} shows that $G$ is discrete, so we may assume that $d(gv, w) \leq d(g'v, w)$ for all $g' \in G$. Let $P=[1, g]_T$. If $g$ is short and $d(gv, w) < d(v, w)$, then we take $h = g$. We therefore suppose $g=x_1\dots x_n$ is not short and $d(gv, w) \leq d(v, w)$.
  
  We construct the diagrams shown in \Cref{fig:membership}. Let $Q$ be a bi-infinite bounding path on $T$ whose bounding polygon $D$ contains an initial segment of $[v, w]$. Similarly, let $Q'$ be a bi-infinite bounding path on $T$ whose bounding polygon $D'$ contains an initial segment of $[gv, w]$. Since $X$ is reduced, by \Cref{bounding-polygon} either $D = D'$ or the interiors $D^\circ$ and $D'^\circ$ are disjoint. First suppose $D \neq D'$. Since $D^\circ \cap D'^\circ = \varnothing$, one of the following cases must hold:
  \begin{enumerate}
    \item Suppose $w \notin D^\circ$, as shown in \Cref{fig:membership} (a). Then $Q$ intersects $[v, w] \setminus \{v\}$ so, by \Cref{lem:shorter-from-intersection}, there exists a short word $h$ such that $d(hv, w) < d(v, w)$.
    \item Suppose $w \notin D'^\circ$, as shown in \Cref{fig:membership} (b). By multiplying by $g^{-1}$ and applying case (1), we find that there exists a short word $h$ such that $d(hv, g^{-1}w) < d(v, g^{-1}w)$. Equivalently, $d(ghv, w) < d(gv, w)$, which is a contradiction.
  \end{enumerate}
  Secondly, suppose $D = D'$. The closure of $D$ contains a segment of $[v, w] \cup [w, gv]$ containing both $v$ and $gv$, so $[v, w] \cup [w, gv]$ is in the closure of $D$. It follows from \Cref{lem:intersection1} that $[v, w] \cup [w, gv]$ intersects either $[\sigma v, \sigma w] \cup [\sigma w, \sigma gv]$ or $[1, \sigma]_T$, where $\sigma$ is the principal word of $Q$; the latter implies $g \in [1, \sigma]_T$, contradicting the assumption that $g$ is short. We apply \Cref{lem:metric-trick} to each of the following cases.
  \begin{enumerate}
    \item
    If $[v, w]$ intersects $[\sigma v, \sigma w]$, then either $d(\sigma^{-1}v, w) = d(v, \sigma w) < d(\sigma v, \sigma w) = d(v, w)$ or $d(\sigma v, w) < d(v, w)$.
    \item 
    If $[w, gv]$ intersects $[\sigma w, \sigma gv]$, then as in (1) either $d(\sigma^{-1}gv, w) < d(gv, w)$ or $d(\sigma gv, w) < d(gv, w)$; each is a contradiction.
    \item
    If $[v, w]$ intersects $[\sigma w, \sigma gv]$ then, since $d(\sigma gv, w) \geq d(gv, w)$, we conclude that $d(\sigma^{-1}v, w) = d(v, \sigma w) < d(\sigma gv, \sigma gw) = d(v, w)$.
    \item
    If $[\sigma v, \sigma w]$ intersects $[w, gv]$, then as in case (3) we find that $d(\sigma v, w) < d(v, w)$.
    \qedhere
  \end{enumerate}
\end{proof}

\begin{figure}[t]
  \centering
  \begin{subfigure}[t]{0.5\textwidth}
    \begin{tikzpicture}
      \node[vertex, label=left:$v$] (v) at (0, 0) {};
      \node[vertex, label=below right:$w$] (w) at (4.8, 0) {};
      \node[vertex, label=right:$gv$] (gv) at (5, 1) {};
      \coordinate (x1) at (1, -1.5);
      \coordinate (x2) at (1.8, -0.6);
      \coordinate (x3) at (2.2, 0.8);
      \coordinate (y1) at (3.5, 0.8);
      \coordinate (y2) at (3, -1.4);
      \draw[edge] (v) -- (w) -- (gv);
      \draw[edge, blue] (v) -- (x1) -- (x2) -- (x3);
      \draw[edge, blue] (gv) -- (y1) -- (y2);
      
      \tkzLabelLine[blue, left, pos=0.8](x2, x3){$Q$}
      \tkzLabelLine[blue, left, pos=0.8](y1, y2){$Q'$}
      
      \node at (1.05, -0.6) {$D$};
      \node at (4.2, 0.4) {$D'$};
    \end{tikzpicture}
    \caption{$w \notin D^\circ$}
  \end{subfigure}%
  \begin{subfigure}[t]{0.5\textwidth}
    \begin{tikzpicture}
      \node[vertex, label=left:$g^{-1}v$] (v) at (0, -0.5) {};
      \node[vertex, label=left:$g^{-1}w$] (w) at (0, 0.5) {};
      \node[vertex, label=right:$v$] (gv) at (5, -1) {};
      \coordinate (x1) at (1, -1.5);
      \coordinate (x2) at (1.8, -0.6);
      \coordinate (x3) at (2.2, 0.8);
      \coordinate (y1) at (5.1, 0.8);
      \coordinate (y2) at (3.5, 0.8);
      \coordinate (y3) at (3, -1.4);
      \draw[edge] (v) -- (w) -- (gv);
      \draw[edge, blue] (v) -- (x1) -- (x2) -- (x3);
      \draw[edge, blue] (gv) -- (y1) -- (y2) -- (y3);
      
      \tkzLabelLine[blue, left, pos=0.8](x2, x3){$g^{-1}Q$}
      \tkzLabelLine[blue, left, pos=0.8](y2, y3){$g^{-1}Q'$}
      
      \node at (0.95, -0.4) {$g^{-1}D$};
      \node at (4.25, 0) {$g^{-1}D'$};
    \end{tikzpicture}
    \caption{$w \notin D'^\circ$}
  \end{subfigure}
  \caption{The minimal bounding paths $Q$ and $Q'$ intersecting $[v, w]$ and $[w, gv]$}
  \label{fig:membership}
\end{figure}
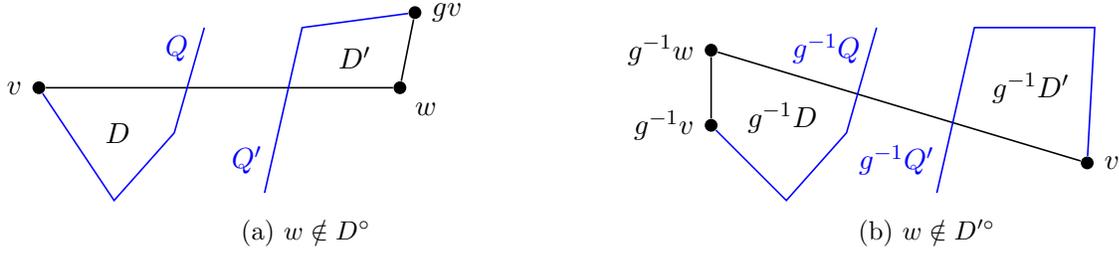

Suppose $X$ is reduced. Let $S$ be the set of short words of $X$.
For $g \in G$, let $\cH(g, v)$ be the half-space $\Set{w \in \Hyp \given d(w, v) < d(w, gv)}$.
The \emph{Dirichlet domain} \cite[\textsection 9.4]{beardon} for $G$ with centre $v$ is
\[
  D = D(G, v) = \Set{w \in \Hyp \given d(w, v) < d(w, gv) \text{ for all } g \in G \setminus \{1\}}.
\]
If $X$ is reduced then, by \Cref{prop:short-words-closer} (1),

\begin{align*}
  D &= \Set{w \in \Hyp \given d(w, v) < d(w, gv) \text{ for all } g \in S} \\
  &= \bigcap_{g \in S} \cH(g, v).
\end{align*}

We can therefore compute the sides of $D$.

\begin{figure}[t]
  \includegraphics[width=6cm]{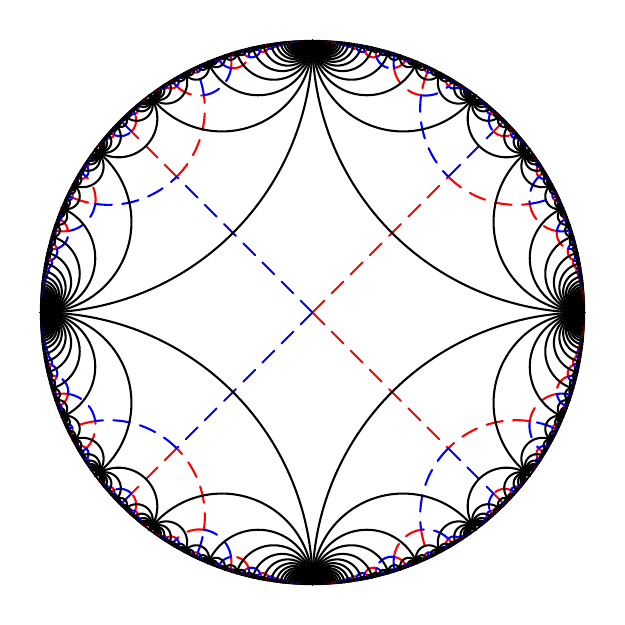}
  \caption{The Dirichlet domain centred at $v$ of the group shown in \Cref{fig:cayley-graph}}
  \label{fig:fundamental-domain}
\end{figure}

\begin{theorem}
  Let $X$ and $Y$ be finite subsets of $\PSL_2(\R)$. If $G = \gen{X}$ and $H = \gen{Y}$ are discrete and torsion-free, then there exists an algorithm that decides whether $G = H$.
\end{theorem}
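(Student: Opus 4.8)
The plan is to reduce the equality test to a membership test on generators, and to solve membership using the Dirichlet-domain reduction afforded by \Cref{prop:short-words-closer}. Since $G = H$ holds if and only if $X \subseteq H$ and $Y \subseteq G$, it suffices to decide, for an arbitrary $g \in \PSL_2(\R)$ and a discrete torsion-free $G = \gen{X}$, whether $g \in G$. First I would run \Cref{alg:free} (\Cref{recognition-torsion-free}) on $X$ and on $Y$; by hypothesis both groups are discrete and torsion-free, so this produces reduced generating sets $X'$ and $Y'$, from which the sets of short words and hence the Dirichlet domains $D(G, v)$ and $D(H, v)$ are computable as in the discussion following \Cref{prop:short-words-closer}.

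The core is a reduction procedure for membership. Given $g$, set $w = gv$ and maintain a word $\gamma \in G$ (initially trivial) satisfying $w = \gamma^{-1} gv$. While some short word $h$ of $X'$ satisfies $d(hv, w) < d(v, w)$, replace $w$ by $h^{-1} w$ and $\gamma$ by $\gamma h$. Each step strictly decreases $d(v, w) = d(\gamma v, gv)$; since $G$ is discrete the orbit $Gv$ is discrete, so only finitely many orbit points lie within a fixed distance of $gv$, and the procedure must terminate. At termination no short word reduces $w$, so the contrapositive of \Cref{prop:short-words-closer}(1) forces $d(v, w) \leq d(\gamma' v, w)$ for every $\gamma' \in G$; that is, the terminal $w = \gamma^{-1} gv$ lies in the closed Dirichlet domain $\overline{D}$.

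It remains to read off membership from the terminal configuration. Because $\overline{D}$ is a fundamental domain and $v$ lies in its interior, $v$ is the unique point of the orbit $Gv$ in $\overline{D}$ (any other $\gamma' v$ with $\gamma' \neq 1$ is strictly closer to itself than to $v$), so $gv \in Gv$ if and only if the terminal $w$ equals $v$. If $w = v$, then $\gamma^{-1} g$ fixes $v$ and is therefore either trivial or elliptic, and I would test whether $\gamma^{-1} g = 1$ by comparing matrices: if it is trivial then $g = \gamma \in G$, while if it is a nontrivial elliptic element then $g \notin G$, since otherwise $\gamma^{-1} g$ would be a nontrivial torsion element of the torsion-free group $G$. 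If instead $w \neq v$, then $gv \notin Gv$ and hence $g \notin G$. Thus $g \in G$ precisely when the reduction returns $\gamma$ with $\gamma = g$. Applying this test to each $x \in X$ using $Y'$ for $H$, and to each $y \in Y$ using $X'$ for $G$, decides whether $G = H$.

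The main obstacle is the correctness of the terminal step rather than the geometry: the reduction naturally detects only whether $gv$ lies in the orbit $Gv$, which is a priori weaker than $g \in G$, because $\gamma^{-1} g$ could be a nontrivial elliptic element of $\PSL_2(\R)$ stabilising $v$ but lying outside $G$. The essential point is that torsion-freeness of $G$ excludes this once $w = v$, collapsing the condition ``$gv \in Gv$ together with the matrix identity $\gamma^{-1} g = 1$'' into genuine membership. Secondarily, one must justify termination carefully from discreteness of the orbit $Gv$; all comparisons reduce to field operations as in \Cref{sec:exact}, so the procedure is effective over a real algebraic number field.
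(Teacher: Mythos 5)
Your proof is correct, but it takes a genuinely different route from the paper. The paper's own proof is a one-liner: having computed reduced generating sets, the Dirichlet domains $D(G, v)$ and $D(H, v)$ are computable as finite intersections of half-spaces $\cH(g, v)$ over short words (the discussion following \Cref{prop:short-words-closer}), and by the Poincar\'e Polygon Theorem $G = H$ if and only if $D(G, v) = D(H, v)$; one simply compares the two polygons. You instead reduce equality to the two-sided containment $X \subseteq H$ and $Y \subseteq G$, and decide each membership by the Dirichlet-domain reduction procedure --- in effect re-deriving the paper's own constructive membership theorem (which the paper proves later via \Cref{alg:fund-domain}, but does not invoke for the equality test). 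The trade-offs are real: the paper's argument is shorter given the machinery already in place, but its correctness leans on the Poincar\'e Polygon Theorem to guarantee that the Dirichlet domain with centre $v$ determines the group; your argument avoids that theorem entirely, using only \Cref{prop:short-words-closer}, discreteness of $G$ and $H$ (for termination, via local finiteness of the orbit of $gv$), and torsion-freeness. A particular virtue of your write-up is that you handle explicitly the subtlety that the reduction a priori detects only whether $gv \in Gv$, and that a nontrivial elliptic element fixing $v$ must be ruled out by torsion-freeness before concluding $g \in G$ from $\gamma^{-1}gv = v$ --- a point the paper's membership proof passes over quickly. The cost is $\abs{X} + \abs{Y}$ membership tests in place of a single polygon comparison; both approaches are effective over real algebraic number fields by the reductions of \Cref{sec:exact}.
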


\begin{proof}
  By the Poincaré Polygon Theorem \cite[\textsection 7.4]{stillwell}, $G = H$ if and only if $D(G, v) = D(H, v)$, which we can compute by the above reasoning.
\end{proof}

Given a fundamental domain $D$ with suitable properties, Eick, Kirschmer, and Leedham-Green \cite[Theorem 2.1]{eick} provide a solution to the constructive membership problem. They require that the complements of the half-spaces $H_g$ bordering $D$ are disjoint, which does not always hold in our case. However \Cref{prop:short-words-closer} provides an alternative set of conditions that is strong enough to obtain a similar algorithm.

\begin{theorem}\label{thm:alg-fund-domain}
  Suppose $X$ is reduced. Given $w \in \Hyp$, there is an algorithm that outputs a word $g$ in $X$ such that $gw$ is in the closure $\overline{D}$ of $D$. 
\end{theorem}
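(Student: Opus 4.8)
The plan is to recast the problem as a \emph{nearest-orbit-point} computation and then solve it by greedy descent. First I would record the following reformulation. For any $u \in \Hyp$, the closure $\overline{D}$ is the non-strict intersection $\Set{u \in \Hyp \given d(u, v) \leq d(u, hv) \text{ for all } h \in G}$, so $u \in \overline{D}$ exactly when $v$ is a nearest point of the orbit $Gv$ to $u$. Writing $u = gw$ and using that every element of $G$ acts by isometries, the condition $d(gw, v) \leq d(gw, hv)$ for all $h$ is equivalent (replace $h$ by $gh$ and cancel $g$) to $d(v, gw) = \min_{k \in G} d(v, kw)$. Thus finding $g$ with $gw \in \overline{D}$ amounts to driving the quantity $d(v, gw)$ down to its minimum over the orbit $Gw$ by successively applying elements of $G$ on the left.

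Next I would describe the descent itself. Let $S$ be the set of short words of $X$; by \Cref{prop:leftmost-repeats} there are only finitely many principal words, each of bounded length, so $S$ is finite and computable from the output of \Cref{alg:free}. Maintain a word $g$, initially trivial, together with the current point $u = gw$. At each step, search $S$ for a short word $h$ with $d(hv, u) < d(v, u)$; each such comparison reduces to field operations via \Cref{thm:tanh-dist}, as explained in \Cref{sec:exact}. If such an $h$ exists, replace $g$ by $h^{-1}g$, so that $u$ becomes $h^{-1}u$ with $d(v, h^{-1}u) = d(u, hv) < d(v, u)$; otherwise halt and return $g$.

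For correctness at termination I would invoke \Cref{prop:short-words-closer}(1) in contrapositive form: if no short word $h$ satisfies $d(hv, u) < d(v, u)$, then no reduced word $g'$ satisfies $d(g'v, u) < d(v, u)$, whence $d(v, u) \leq d(g'v, u)$ for all $g' \in G$, and therefore $u = gw \in \overline{D}$ by the reformulation above.

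Finally, termination. Each step strictly decreases $d(v, g_i w) = d(g_i^{-1}v, w)$, so the orbit points $g_i^{-1}v$ are pairwise distinct and all lie in the closed ball of radius $d(v, w)$ about $w$. By \Cref{cor:reduced-is-discrete} the group $G$ is discrete, hence $Gv$ is a discrete subset of $\Hyp$ and this ball meets it in only finitely many points; so the descent performs only finitely many steps. I expect this termination argument to be the main point requiring care: strict decrease of a real-valued quantity does not on its own preclude an infinite convergent descent, and it is precisely the discreteness of the orbit — together with the finiteness of the candidate set $S$ checked at each step — that rules this out.
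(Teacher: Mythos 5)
Your proposal is correct and takes essentially the same route as the paper: the identical greedy descent over the finite set of short words, with correctness at termination supplied by \Cref{prop:short-words-closer}(1) and termination by discreteness via \Cref{cor:reduced-is-discrete}. The only differences are cosmetic---you apply $h^{-1}$ where the paper's \Cref{alg:fund-domain} applies $h$ (equivalent, since the set of short words is closed under inversion), and you spell out the local-finiteness-of-orbits argument that the paper compresses into ``since $G$ is discrete, this sequence must be finite.''
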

\Cref{alg:fund-domain} establishes this theorem. It works as follows: assign $g = 1$. Replace $g$ with $hg$, where $h$ is a short word in $X$ such that $d(v, hgw) < d(v, gw)$. Repeat this step until no such $h$ exists.

\begin{algorithm}
  \KwData{Reduced generating set $X$ of $\PSL_2(\R)$, $w \in \Hyp$}
  \KwOut{$g \in \gen{X}$ such that $gw \in \overline{D(G, v)}$}
  $g \gets 1$\;
  \While{there exists a short word $h$ of $X$ such that $d(v, hgw) < d(v, gw)$}{
    $g \gets hg$\;
  }
  \Return{$g$}\;
  \caption{Find word mapping vertex to Dirichlet domain}\label{alg:fund-domain}
\end{algorithm}

\begin{proof}[Proof of correctness of \Cref{alg:fund-domain}]
  Suppose $w \notin \overline{D}$. Then $d(w, gv) < d(w, v)$ for some $g \in G \setminus \{1\}$. By \Cref{prop:short-words-closer}, there is a short word $h$ of $X$ such that $d(hw, v) < d(w, v)$. By repeating these steps, we obtain a sequence $(1 = g_1, g_2, \dots$) of words in $X$ such that $d(g_{i+1} w, v) < d(g_{i} w, v)$ for all $i$.
  Since $G$ is discrete, this sequence must be finite, so there is some $n \in \N$ such that $g_n w \in \overline{D}$.
\end{proof}

\begin{remark}
If $gw \in \overline{D}$, then $Gw \cap \overline{D} = \Set{hgw \given h \in U}$ where $U = \Set{h \in S \given hgw \in H_h}$. We see that $gw$ is the unique representative of $w$ in $\overline{D}$ if and only if $gw \in D$. We can modify \Cref{alg:fund-domain} to return $hgw$, where $h \in U$ minimises $hgw$ with respect to this ordering. Hence all points in the same orbit of $G$ are mapped to the same representative in $\overline{D}$.
\end{remark}

\begin{theorem}
  Let $P$ be $\SL_2(\R)$ or $\PSL_2(\R)$. Let $X$ be a finite subset of $P$ generating a discrete torsion-free group $G$, and let $g \in P$. There exists an algorithm that decides whether $g$ can be written as a word in $X$, and if so returns such a word.
\end{theorem}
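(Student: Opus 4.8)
The plan is to combine the reduction to a reduced generating set with the fundamental-domain algorithm of \Cref{thm:alg-fund-domain}. First I would dispose of the case $P = \SL_2(\R)$. Since $G$ is torsion-free it contains no element of order $2$, so $-I \notin G$ and the quotient map $\SL_2(\R) \to \PSL_2(\R)$ restricts to an isomorphism of $G$ onto its image $\overline{G} = \gen{\overline{X}}$. Solving the problem for $\overline{g}$ in $\overline{G}$ and lifting the resulting word back to $\SL_2(\R)$ (testing whether it evaluates to $g$ or $-g$) reduces everything to $\PSL_2(\R)$; this is exactly \cite[Lemma 4.1]{eick}. I would then run \Cref{alg:free} on $X$ to obtain a reduced generating set $X'$ for $G$ via \Cref{recognition-torsion-free}, recording for each element of $X'$ a word expressing it in $X$, so that any word in $X'$ can be rewritten as a word in $X$ at the end.

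With $X'$ reduced, the Dirichlet domain $D = D(G,v)$ and its side structure become available. The central observation is that $v$ itself lies in the \emph{open} domain $D$: because $G$ is discrete and torsion-free it acts freely on $\Hyp$ (a nontrivial element fixing a point of $\Hyp$ would be elliptic, hence torsion), so $g'v \neq v$ for every $g' \in G \setminus \{1\}$ and thus $d(v,v) = 0 < d(v, g'v)$. The next step is to apply \Cref{alg:fund-domain} to the point $w = gv$, producing a word $h$ in $X'$ with $hgv \in \overline{D}$; termination and correctness of this step are guaranteed by the proof of \Cref{thm:alg-fund-domain}, which rests on \Cref{prop:short-words-closer} and the discreteness supplied by \Cref{cor:reduced-is-discrete}.

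It then remains to convert ``$hgv \in \overline{D}$'' into a membership decision, and this is where freeness does the real work. I would first record that an interior point's orbit meets $\overline{D}$ only once: if $w \in D$ and $g' \in G \setminus \{1\}$, then applying the isometry $g'$ to the strict inequality $d(w,v) < d(w, g'^{-1}v)$ (valid since $w \in D$) yields $d(w,v) < d(g'w, v)$, whereas $g'w \in \overline{D}$ would force, on taking the competitor $g'' = g'$ in the definition of $\overline{D}$, the bound $d(g'w, v) \le d(g'w, g'v) = d(w,v)$, a contradiction. Hence $Gw \cap \overline{D} = \{w\}$. Applying this with $w = v$: if $g \in G$ then $hg \in G$ and $(hg)v = hgv \in \overline{D} \cap Gv = \{v\}$, so $hgv = v$; as the stabiliser of $v$ in $G$ is trivial, $hg = 1$, i.e.\ $g = h^{-1}$. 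Conversely $g = h^{-1}$ immediately gives $g \in G$ because $h \in G$. Therefore $g \in G$ if and only if the exact identity $hg = 1$ holds in $P$, and in the affirmative case the algorithm returns $h^{-1}$, rewritten as a word in $X$ using the expressions recorded during reduction.

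The proof itself is short once the fundamental domain is in hand: the substantive content lies entirely in the earlier results (correctness and termination of \Cref{alg:fund-domain}, freeness of the action via torsion-freeness, and the single-representative property just established). The one genuine subtlety to get right is that $hgv$ may land on $\partial D$ rather than in the interior, so membership cannot be decided by a purely geometric test such as $hgv = v$ alone; the clean resolution is to fall back on the exact group-element comparison $hg = 1$, which sidesteps any boundary ambiguity and, over a real algebraic number field, requires only the field operations discussed in \Cref{sec:exact}.
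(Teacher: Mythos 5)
Your proposal is correct and takes essentially the same route as the paper: reduce to a reduced generating set via \Cref{alg:free}, apply \Cref{alg:fund-domain} to the point $gv$ to obtain $h$, and decide membership by the exact comparison $hg = 1$ (equivalently $g = h^{-1}$). The additional details you supply---the $\SL_2(\R)$-to-$\PSL_2(\R)$ reduction via torsion-freeness and the single-representative property $Gv \cap \overline{D} = \{v\}$---are precisely what the paper leaves implicit, so they strengthen rather than change the argument.
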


\begin{proof}
  We assume that $X$ is reduced, since \Cref{alg:free} writes the elements of a reduced generating set of $G$ as words in $X$. Using \Cref{alg:fund-domain},
  we find a word $h$ of $X$ such that $h^{-1}gv \in D$. If $g \in G$, then $h^{-1}gv = v \in D$, so $h$ is unique. Hence $g \in G$ if and only if $g = h^{-1}$.
\end{proof}

\section{Discrete subgroups of $\SL_2(\R)$ and $\PSL_2(\R)$}

Recall Selberg's Lemma \cite{selberg}.
\begin{theorem}
  Every finitely generated matrix group over a field of characteristic $0$ has a finite index torsion-free subgroup.
\end{theorem}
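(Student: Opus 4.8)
The plan is to reduce $G$ to a linear group over a finitely generated integral domain and then produce a finite-index torsion-free congruence subgroup. Let $g_1, \dots, g_r$ generate $G \leq \GL_n(K)$ with $K$ of characteristic $0$. First I would let $R$ be the subring of $K$ generated over $\Z$ by the entries of the $g_i$ and of the $g_i^{-1}$; this is a finitely generated integral domain containing $\Z$, and $G \leq \GL_n(R)$. Since $R$ is a finitely generated $\Z$-algebra, it is a Jacobson ring and every maximal ideal $\mathfrak{m}$ has \emph{finite} residue field $R/\mathfrak{m}$ (a form of the general Nullstellensatz over $\Z$). Moreover, because $R$ has characteristic $0$ the map $\Z \to R$ is injective, so the image of the set of primes of $R$ in the set of primes of $\Z$ is constructible and contains the generic point, and hence omits only finitely many rational primes; thus for infinitely many primes $p$ there is a maximal ideal of $R$ with residue characteristic $p$.

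For any maximal ideal $\mathfrak{m}$, reduction $\pi_{\mathfrak{m}} \colon \GL_n(R) \to \GL_n(R/\mathfrak{m})$ has finite image, so the principal congruence subgroup $G(\mathfrak{m}) = G \cap \ker \pi_{\mathfrak{m}}$ has finite index in $G$. The heart of the argument is to show that $G(\mathfrak{m})$ has only $p$-power torsion, where $p$ is the residue characteristic of $\mathfrak{m}$. Granting this, I would choose two maximal ideals $\mathfrak{m}, \mathfrak{q}$ of distinct residue characteristics $p \neq q$ (possible by the previous paragraph) and set $N = G(\mathfrak{m}) \cap G(\mathfrak{q})$. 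Then $N$ has finite index in $G$, and any finite-order element of $N$ has order that is simultaneously a power of $p$ and a power of $q$, hence order $1$; so $N$ is the desired torsion-free subgroup.

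It remains to prove the torsion claim, which I expect to be the main obstacle. Suppose $g \in G(\mathfrak{m})$ has finite order, and write this order as $p^a m$ with $\gcd(m, p) = 1$. Since $g$ has finite order in characteristic $0$ it is diagonalisable over $\overline{K}$ with roots-of-unity eigenvalues. I would extend $\mathfrak{m}$ to a maximal ideal $\mathfrak{M}$ of the ring obtained by adjoining these eigenvalues to $R$; its residue characteristic is still $p$. The element $g^{p^a}$ has order $m$ coprime to $p$ and reduces to the identity modulo $\mathfrak{M}$, so each of its eigenvalues $\zeta$ is a root of unity of order dividing $m$ satisfying $\zeta \equiv 1 \pmod{\mathfrak{M}}$. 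For a nontrivial root of unity $\zeta$ whose order is coprime to $p$, the element $1 - \zeta$ divides that order and is therefore a unit at $\mathfrak{M}$, contradicting $\zeta \equiv 1$; hence every eigenvalue of $g^{p^a}$ equals $1$. As $g^{p^a}$ is semisimple this forces $g^{p^a} = 1$, so the order of $g$ is a power of $p$, as claimed. The delicate points are ensuring that reduction remains well-behaved after adjoining eigenvalues, so that $1 - \zeta$ is genuinely a unit in the extended residue ring, and carrying out the coprime-to-$p$ root-of-unity computation at the correct valuation; the two-prime device is precisely what lets me sidestep the harder valuation estimate that would otherwise be needed to rule out $p$-power torsion at residue characteristic $p$.
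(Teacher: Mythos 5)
Your proof is correct, but it is not the argument the paper points to. The paper does not prove Selberg's Lemma itself: it cites Selberg's original paper, and the surrounding discussion commits to the single-prime form of the argument --- find one prime $p$ so that the congruence kernel $\ker\psi_p$ is already torsion-free --- since that is what the cited Detinko--Flannery algorithm computes and what the subsequent Schreier-generator construction and complexity analysis consume. Your two-prime variant proves the same statement while sidestepping the hardest step of the single-prime proof, namely excluding $p$-power torsion in the kernel at residue characteristic $p$, which classically requires a valuation estimate; your observation that this can be traded for a second maximal ideal of different residue characteristic is exactly right. Your torsion computation at one $\mathfrak{m}$ is also sound: eigenvalues of a torsion element are roots of unity, they are integral over $R$ so lying-over gives a maximal ideal $\mathfrak{M}$ above $\mathfrak{m}$ with the same (finite) residue characteristic, and for $\zeta\neq 1$ of order $d$ prime to $p$ the element $1-\zeta$ divides $d$ in $\Z[\zeta]$ and so cannot lie in $\mathfrak{M}$, forcing all eigenvalues of $g^{p^a}$ to equal $1$ and hence $g^{p^a}=1$ by semisimplicity. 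What the single-prime route buys --- and why the paper relies on it --- is a smaller index: your subgroup is the kernel of reduction modulo $\mathfrak{m}\mathfrak{q}$, roughly squaring the index that enters the paper's complexity bound $O(\abs{X}^4 q^{12} l)$, whereas one well-chosen prime suffices. (A minor remark: the cofiniteness of the set of residue characteristics follows more elementarily from Noether normalisation of $R\otimes_\Z\Q$ and clearing denominators than from Chevalley constructibility, but your argument is valid.)
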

Given a finitely generated matrix group $G$ and prime $p$, let $\psi_p$ be the reduction map of $G$ modulo $p$. The proof of Selberg's Lemma finds a prime $p$ such that $\ker \psi_p$ is torsion-free. An algorithm to compute such a $p$ is in \cite[Proposition 3.4, Example 3.6]{detinko}.

We combine this with Schreier's Lemma \cite[Theorem 1.12]{cameron}.
\begin{theorem}\label{thm:schreier}
  Let $G$ be a finitely generated group with generating set $X$, and let $H$ be a finite-index subgroup of $G$ with $S$ a set of left coset representatives of $H$ is $G$. For all $g \in G$, define $\overline{g}$ to be the coset representative of $g$ in $S$. Then $H$ is generated by $\Set{\overline{(xs)^{-1}}xs \given s \in S,\; x \in X}$.
\end{theorem}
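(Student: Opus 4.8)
The plan is to prove the two inclusions hidden in the claim: first that each proposed generator lies in $H$, and second that these elements generate all of $H$. Throughout I write $Y$ for the proposed generating set and assume (harmlessly) that $1 \in S$, so that $\overline{g}$ denotes the unique element of $S$ lying in the same coset as $g$; in particular $\overline{h} = 1$ for every $h \in H$. Reading the bar as ``representative of the coset of'', the element attached to a pair $(s,x)$ is obtained from $xs$ by dividing off the representative of its coset, and so lies in $H$: since $\overline{xs}$ and $xs$ represent the same coset, $(\overline{xs})^{-1}xs \in H$. This immediately gives $\gen{Y} \subseteq H$, the easy inclusion.

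The substance is the reverse inclusion $H \subseteq \gen{Y}$, which I would establish by the standard rewriting (Reidemeister--Schreier) telescoping. Let $h \in H$ and, using $G = \gen{X}$, write $h = a_1 a_2 \cdots a_n$ with each $a_k \in X^\pm$. The key device is a sequence of coset representatives read off from the suffixes of this word: set $s_n = 1$ and, descending, $s_{k-1} = \overline{a_k s_k}$, so that $s_k = \overline{a_{k+1}\cdots a_n}$ for every $k$. Two boundary facts then hold: $s_n = \overline{1} = 1$ by definition, and $s_0 = \overline{a_1 \cdots a_n} = \overline{h} = 1$ precisely because $h \in H$. With $\overline{a_k s_k} = s_{k-1}$, the factors telescope:
\[
  \prod_{k=1}^{n} (\overline{a_k s_k})^{-1}\, a_k s_k
  = \prod_{k=1}^{n} s_{k-1}^{-1} a_k s_k
  = s_0^{-1}\,(a_1 a_2 \cdots a_n)\,s_n
  = s_0^{-1}\, h\, s_n = h.
\]
Thus $h$ is a product of factors of the form $(\overline{a_k s_k})^{-1} a_k s_k$, which are exactly the proposed generators when $a_k \in X$.

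The main obstacle, and the only place genuine care is needed, is the bookkeeping when a letter $a_k$ is an \emph{inverse} generator $x^{-1}$, since $Y$ is indexed over $X$ rather than $X^\pm$. Here I would verify that such a factor is the inverse of a genuine generator: using $\overline{x s_{k-1}} = \overline{x\,\overline{x^{-1}s_k}} = \overline{s_k} = s_k$, one computes $(\overline{x s_{k-1}})^{-1} x s_{k-1} = s_k^{-1} x s_{k-1}$, whose inverse is $s_{k-1}^{-1} x^{-1} s_k$, i.e.\ exactly the factor produced by the letter $x^{-1}$ at this step. Hence every factor in the telescoped product is an element of $Y$ or $Y^{-1}$, so $h \in \gen{Y}$ and therefore $H = \gen{Y}$. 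The remaining points to check are purely routine: that the map $s \mapsto \overline{a s}$ is well defined on representatives (so the $s_k$ are unambiguous), and that the left/right coset convention is fixed consistently so that $(\overline{xs})^{-1} xs$ indeed lands in $H$ and $\overline{h}=1$ for $h \in H$. I expect the telescoping itself to be immediate once the representative sequence $(s_k)$ is set up; the inverse-letter correspondence is the one step that rewards an explicit check.
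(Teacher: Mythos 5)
The paper gives no proof of this statement at all: it is quoted as Schreier's Lemma directly from \cite[Theorem 1.12]{cameron}, so there is no in-paper argument to compare against. Your proof is the standard Reidemeister--Schreier telescoping argument, and its core is correct: the suffix representatives $s_k$ with $s_{k-1} = \overline{a_k s_k}$, the telescoping identity $\prod_k s_{k-1}^{-1}a_k s_k = s_0^{-1}hs_n$, and the verification that an inverse letter $x^{-1}$ contributes exactly the inverse of the genuine generator $(\overline{xs_{k-1}})^{-1}xs_{k-1}$ are all right. Note also that your reading of the generators as $(\overline{xs})^{-1}xs$ is the correct one; the formula $\overline{(xs)^{-1}}xs$ as literally typeset in the statement is a typo, since the representative of the coset $(xs)^{-1}H$ times $xs$ need not lie in $H$ when $H$ is not normal.

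The one place where you assert rather than prove is the reduction to $1 \in S$. This is not a ``without loss of generality'' in the usual sense: the generating set $Y$ is defined in terms of the given transversal $S$, so you are not free to replace $S$ by a transversal containing $1$. Fortunately the patch is one line, using your own telescoping. If $t = \overline{1}$, then $t \in H$ (it represents the coset $H$), and for $h \in H$ one has $s_0 = \overline{h} = t$ and $s_n = \overline{1} = t$, so the telescoped product yields $t^{-1}ht \in \gen{Y}$ for every $h \in H$. Since conjugation by $t \in H$ maps $H$ onto itself, this gives $H = t^{-1}Ht \subseteq \gen{Y}$, and combined with your easy inclusion $\gen{Y} \subseteq H$ the general case follows. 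With that sentence added, your proof is complete as stated.
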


From these lemmas we conclude the following:

\recognition*
\begin{proof}
  The lemmas of Selberg and Schreier give a generating set for a finite index torsion-free subgroup $H$ of $G = \gen{X}$. We then use Algorithm \ref{alg:free} to decide whether or not $H$ is discrete. Since $H$ is discrete if and only if $G$ is discrete, we thus decide the discreteness of $G$.
\end{proof}

\membership*
\begin{proof}
  Let $g \in \PSL_2(\R)$. As above, we find a finite index torsion-free subgroup $H$ of $G$ and a set $S$ of left coset representatives of $H$ in $G$. Observe that $g \in G$ if and only if some element of $Sg$ is in $H$. Testing this immediately solves the constructive membership problem.
\end{proof}

Finally, we identify the Dirichlet domain $D(G, v)$. Riley \cite{riley} and Voight \cite{voight} give algorithms to compute the Dirichlet domain of a Fuchsian group. Here we present a simplified algorithm that, in our case, takes advantage of the information known about $G$.

Recall that $\cH(g, v)$ is the half-space $\Set{w \in \Hyp \given d(v, w) < d(w, g)}$. Observe that for all $g, h \in G$,
\begin{align*}
\cH(gh, v) &= \Set{w \in \Hyp \given d(v, w) < d(w, ghv)} \\
&= \Set{w \in \Hyp \given d(g^{-1}v, g^{-1}w) < d(g^{-1}w, hv)} \\
&= \Set{gw \in \Hyp \given d(g^{-1}v, w) < d(w, hv)} \\
&= g\cH(h, g^{-1}v).
\end{align*}

Let $H$ be a finite index torsion-free subgroup of $G$, and let $S$ be a set of left coset representatives of $H$. Now
\[
  D(G, v) = \bigcap_{g \in G}\cH(g, v)
  = \bigcap_{s \in S,\; h \in H}\cH(sh, v)
  = \bigcap_{s \in S}\left(s\bigcap_{h \in H}\cH(h, s^{-1}v)\right)
  = \bigcap_{s \in S}sD(H, s^{-1}v).
\]

We have thus expressed $D(G, v)$ in terms of finitely many Dirichlet domains for $H$, each of which can be computed as shown in \Cref{sec:membership}. We now compute a minimal set of generators and relations using \cite[Algorithm 5.7]{voight}.

\section{Complexity analysis}\label{sec:complexity}
To compare with Riley's algorithm in \cite{riley}, we briefly discuss the complexity of Algorithm \ref{alg:free} and the algorithm given in the proof of theorem \ref{recognition}. We consider all field operations and comparisons to be constant time computations.

\begin{proposition}
  The number of steps to complete Algorithm \ref{alg:free} with generating set $X$ is $O(\abs{X}^4 l)$, where $l$ is the length of the maximum word in $X$ that must be considered by the algorithm.
\end{proposition}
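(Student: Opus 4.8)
The plan is to express the total running time as the product of the number of iterations of the main loop of \Cref{alg:free} and the cost of a single iteration, and to bound each factor separately. Under the stated model every field operation and comparison is $O(1)$, so every matrix multiplication, inversion, trace, displacement $\abs{\cdot}$, or angle $\measuredangle$ computation on the (constant-size) matrices in play is also $O(1)$. A crucial feature of this model is that the word length $l$ does \emph{not} enter the per-iteration cost: although a generator of the current set $X'$ may be a word of length up to $l$ in the original $X$, it is carried as a single matrix, and forming any product of current generators costs $O(1)$ per multiplication irrespective of $l$. Consequently $l$ can only appear through the \emph{number} of iterations.

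First I would bound the cost of a single iteration. Computing the displacements of the elements of $X'$ and locating $a,b$ is $O(\abs{X})$; computing the cyclic order $B$ on $X^\pm$ (and hence the permutation $\eta$) is $O(\abs{X}^2)$. By \Cref{prop:leftmost-repeats} the leftmost principal words are indexed by the cycles of $\eta$, whose lengths sum to $\abs{X^\pm} = 2\abs{X}$, and similarly for rightmost paths; generating them by prefix products costs $O(\abs{X})$ matrix operations. Every short word is a cyclic subword of a principal word, so there are $O(\abs{X}^2)$ short words, each recoverable from two prefix products in $O(1)$; forming all of them, together with their displacements and traces (to test for an elliptic short word), is $O(\abs{X}^2)$. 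The dominant step is assembling the set $S$ of pairs $(x,h)$ in which a short word $h$ is a good replacement for $x \in X'$: there are $\abs{X}$ generators and $O(\abs{X}^2)$ short words, so $O(\abs{X}^3)$ candidate pairs, and—after precomputing for each short word the multiset of its letters in $O(\abs{X})$ time—testing each candidate and selecting one of minimal $\abs{x}-\abs{h}$ is $O(\abs{X}^3)$. Hence one iteration costs $O(\abs{X}^3)$.

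Next I would bound the number of iterations. The iterations that strictly shrink $\abs{X'}$—deleting an element of $X' \cap X'^{-}$, or collapsing a cyclic pair $\gen{a,b}$ to a single generator—can occur at most $\abs{X}$ times, since $\abs{X'}$ never increases. Every other iteration is a good replacement, which strictly decreases the total displacement $s = \sum_{x \in X'}\abs{x}$. The claim to establish is that at most $O(\abs{X}l)$ such replacements occur. The termination proof of \Cref{recognition-torsion-free} already shows this number is finite; the task here is the sharper, linear-in-$l$ bound. I would argue it by charging each good replacement against a combinatorial quantity—morally the total word length of $X'$ in the original generators, which is at most $\abs{X}l$ throughout—and showing that this quantity admits only $O(\abs{X}l)$ strict decreases along the run, using discreteness to preclude infinite descent and the leftmost-path structure of \Cref{prop:leftmost-repeats} to control the length of each descent.

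The hard part will be exactly this last bound. A good replacement is guaranteed to lower the \emph{displacement} of a generator, but not transparently its combinatorial length, so the main obstacle is to convert the geometric monotonicity of $s$ into a clean $O(\abs{X}l)$ count of iterations rather than the merely-finite count furnished by termination. Granting this, the two estimates multiply to give $O(\abs{X}^3) \cdot O(\abs{X}l) = O(\abs{X}^4 l)$, as claimed.
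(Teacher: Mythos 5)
Your per-iteration estimate of $O(\abs{X}^3)$ matches the paper's (the paper bounds each iteration by the sum of cubes of the principal-word partition sizes, which is $(2\abs{X})^3$), so that half is fine. The genuine gap is exactly the step you flag as ``the hard part'': the bound of $O(\abs{X}\,l)$ on the number of good replacements is never proved, only conjectured, and the route you sketch for it is pointed in the wrong direction. You propose to show that the total word length of $X'$ in the original generators ``admits only $O(\abs{X} l)$ strict decreases,'' invoking discreteness to preclude infinite descent. But this quantity does not decrease at all --- it strictly \emph{increases} with every good replacement, and that is precisely what makes the count work. A good replacement $h$ for $x$ contains exactly one term equal to $x$ or $x^{-1}$ and, since $\abs{h} < \abs{x}$ forces $h \neq x^{\pm 1}$, at least one further term; hence, written as a word in the original $X$, the new generator is strictly longer than the one it replaces. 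No geometric monotonicity of $s$, no discreteness, and no appeal to \Cref{prop:leftmost-repeats} is needed for this counting step.

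This is the paper's argument: each replacement increases the length (in the original generators) of some word by at least $1$, so after $r$ replacements the average length of the words in the final generating set is at least $r/\abs{X}$; since by definition $l$ bounds the length of every word the algorithm considers, $l \geq r/\abs{X}$, i.e.\ $r \leq \abs{X}\, l$. Multiplying by the $O(\abs{X}^3)$ per-iteration cost gives $O(\abs{X}^4 l)$. Your worry that ``a good replacement is guaranteed to lower the displacement, but not transparently its combinatorial length'' has it backwards: the combinatorial length transparently \emph{grows}, and that growth, capped by $l$, is the whole proof. Discreteness enters only to guarantee termination (i.e.\ that $l$ is finite), which is a separate matter already settled in \Cref{recognition-torsion-free}.
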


\begin{proof}
  In each iteration, we must partition the generating set $X_i$ into principal words, and then for each subword we iterate over its terms. Hence the complexity is bounded by the sum of the cube of the size of each partition, which is bounded by $(2\abs{X})^3 r$, where $r$ is the number of replacements performed before the algorithm terminates. We now give an upper bound for $r$ in terms of $l$. We see that $l$ is at least the average length of all words in the final generating set, which is at least $r/\abs{X}$ since each replacement increases the length of some word. Hence $r \leq \abs{X}l$. Therefore the algorithm has complexity $O(\abs{X}^4 l)$.
\end{proof}
Proposition \ref{prop:indiscrete-reduce-bound} gives an upper bound for $l$ in terms of the maximum displacement of an element of $X$. While this bound is at least exponential, we could not find a family of examples that meets it. In practice, $l$ seems to be much smaller than this bound would suggest.

The complexity of the algorithm exhibiting Theorem \ref{recognition} depends on the number of generators of the torsion-free subgroup $H$ of $G$ that is found. Since $G/H < \PSL_2(F_q)$ for some $q$, theorem \ref{thm:schreier} implies the upper bound $\abs{X}[G : H] \leq \abs{X}q^3$. Hence the number of steps to complete this algorithm is $O(\abs{X}^4q^{12}l)$.

\section{Implementation}\label{sec:magma}
We have implemented Algorithms \ref{alg:free} and \ref{alg:fund-domain}, and the algorithms described in the proofs of Theorems \ref{recognition} and \ref{membership}, for real algebraic number fields in the SL2RDiscrete package \cite{markowitz-magma} developed in \Magma. In this implementation we represent $\Hyp$ by the complex upper half plane, and fix $v = i$. We represent the generating set as a sequence rather than a set to simplify the code; this does not affect the algorithm.

\Cref{tab:recognition} reports the performance of \Cref{alg:free} for randomly chosen generating sets of $\SL_2(\Q(\sqrt{3}))$. For each trial, we chose a different pair of parameters $(n, d)$ that change the selected generating sets: $n$ is the size of the generating set, and each element is the product of $d$ random upper or lower triangular matrices. For each $n$, we empirically chose $d$ so that approximately half of the sets chosen generate a discrete torsion-free group; this seems to maximise the average runtime. For each pair of parameters, we give the average runtime over 1000 randomly chosen generating sets. The times for each trial vary significantly, so we also record the maximum runtime.

\Cref{tab:domain} reports the performance of \Cref{alg:fund-domain}. For each of 100 finite subsets of $\SL_2(\Q(\sqrt{3}))$ randomly chosen with parameters $(5, 15)$ that generate a discrete free group, we chose $10$ random points of $\Hyp$ and averaged the runtime over all 1000 pairs of points and generating sets. For each group $G$, we chose points of the form $gv$, where $v = i$ in the upper half-plane model of $\Hyp$ and $g$ is a random word with $m$ terms in the reduced generating set of $G$ found via \Cref{alg:free}. \Cref{tab:domain} reports the average runtime per trial for different values of $m$.

Our implementations of the algorithms exhibiting Theorems \ref{recognition} and \ref{membership} often take much longer to run even for small examples, due to the large index of the torsion-free subgroup that is found. For example, let $G \cong \Delta(2, 6, 6)$ be generated by
\[
A = \begin{bmatrix}
  2-t & t-1 \\
  2-2 t & t-2
\end{bmatrix}, \quad
B = \begin{bmatrix}
  t-1 & 1 \\
  t-2 & 1
\end{bmatrix}, \quad
C = \begin{bmatrix}
  t & 1 \\
  -1 & 0
\end{bmatrix},
\]
where $t = \sqrt{3}$ (see \cite{mcmullen}).
Our package finds a torsion-free subgroup $H$ of index 240 in $G$ with 166 generators, and returns a reduced generating set for $H$ with 22 generators after 160 minutes.
It also confirms that the matrix
\[
  \begin{bmatrix}
   -12483933055t - 21622285764 & 12710965t + 21803060 \\
   23447531t + 40614766 & -23470t - 41654
  \end{bmatrix}
\]
is not in $G$ in 8.1 minutes.

The tests were run in \Magma\ V2.28-4 on an Intel Xeon E5-2690 v4 2.60GHz CPU. The code used to generate these tests is in the file ``times.m'' in \cite{markowitz-magma}.

\begin{table}[h]
  \begin{tabular}{|r|r r r r|}
    \hline
    $(n, d)$ & $(2, 7)$ & $(5, 15)$ & $(10, 20)$ & $(20, 27)$ \\
    \hline
    Average time & 0.003 & 0.055 & 0.78 & 21 \\
    Maximum time & 0.04 & 0.91 & 23 & 440 \\
    \hline
  \end{tabular}
  \caption{Runtime for \Cref{alg:free} (seconds)}
  \label{tab:recognition}
\end{table}

\begin{table}[h]
  \begin{tabular}{|r|r r r r|}
    \hline
    $m$ & $5$ & $10$ & $20$ & $40$ \\
    \hline
    Average time & 0.15 & 0.39 & 1.2 & 4.7 \\
    \hline
  \end{tabular}
  \caption{Runtime for \Cref{alg:fund-domain} (seconds)}
  \label{tab:domain}
\end{table}

\section{Proof of \Cref{prop:indiscrete-reduce-bound}}\label{ap:bound}
We apply standard results of neutral geometry (that is, geometry without the parallel axiom), as found for example in \cite[\textsection 2]{hartshorne}.
Given points $A$, $B$, and $C$, define $A \btw B \btw C$ to mean that $B$ is between $A$ and $C$. We use $\cong$ to indicate congruence. Arithmetic operations on line segments are applied to their lengths.

\begin{lemma}\label{lem:quad-bisector}
  Let $ABCD$ be a convex quadrilateral. Let $E$ be the intersection of the diagonals $AC$ and $BD$. Let $l$ be the perpendicular bisector of $CD$. Suppose the rays $\overrightarrow{DA}$ and $\overrightarrow{CB}$ intersect $l$ at points $F$ and $G$ respectively. Then $\max\{AF, BG\} \geq AB/2$. Furthermore, if $AC \cong BD$ and $AE + DE \leq AC$, then $AF \geq AB/2$.
\end{lemma}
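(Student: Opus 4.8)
\emph{Proof plan.} The plan is to lean on the single fact that distinguishes $l$ in neutral geometry: every point of $l$ is equidistant from $C$ and $D$. In particular $FC \cong FD$ and $GC \cong GD$, so $\triangle FCD$ and $\triangle GCD$ are isosceles. Since $F$ lies on $\overrightarrow{DA}$ and $G$ on $\overrightarrow{CB}$, the base angles transfer to the quadrilateral, giving $\angle ADC \cong \angle FDC \cong \angle FCD$ and $\angle BCD \cong \angle GCD \cong \angle GDC$. I would also record the reflection $\rho$ across $l$: an isometry interchanging $C$ and $D$ and fixing $F$ and $G$, which carries line $DA$ to line $CF$ and line $CB$ to line $DG$, so that $FA \cong F\rho(A)$ and $GB \cong G\rho(B)$. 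These are the tools I would carry into both parts.

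For the first inequality I would reduce to the single claim $AF + BG \geq AB$: if both $AF$ and $BG$ were less than $AB/2$ their sum would be less than $AB$. To prove $AF + BG \geq AB$ I would split on whether $A$ and $B$ lie on the same side of $l$ or on opposite sides, and use the convexity of $ABCD$ to pin down the betweenness relations that occur (where $F$ sits on $\overrightarrow{DA}$ relative to $A$, where $G$ sits on $\overrightarrow{CB}$ relative to $B$, and where $AB$ meets $l$). In the opposite-side case the crossing point $H = AB \cap l$ satisfies $A \btw H \btw B$ with $HC \cong HD$, and the isosceles relations above let me compare $AH$ against $AF$ and $HB$ against $BG$; convexity is exactly what forces each comparison to go the right way. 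The same-side case I would handle by the analogous bookkeeping after applying $\rho$ to one of the two points.

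For the ``furthermore'' I would first note that $AE + DE \leq AC = AE + EC$ is equivalent to $DE \leq CE$, and that $AC \cong BD$ then forces $AE \leq BE$ as well. Applying the SAS (hinge) inequality to $\triangle AED$ and $\triangle BEC$, which share the vertical angle at $E$, yields $DA \leq CB$, while comparing the subtriangles at $E$ (e.g.\ in $\triangle ECD$, where $ED \leq EC$ controls the base angles) is meant to compare $\angle ADC$ with $\angle BCD$. Writing $AF = FD - DA$ and $BG = GD - CB$ using the order $D \btw A \btw F$ and $C \btw B \btw G$, and substituting $FD = FC$, $GD = GC$ from the isosceles apexes, these comparisons should assemble into $AF \geq BG$, whence $2AF \geq AF + BG \geq AB$ and $AF \geq AB/2$. \textbf{Main obstacle.} I expect the delicate point to be this final assembly: the apex distance $FD$ along $l$ depends on the base angle \emph{and} the base $CD$ simultaneously, so converting the two comparisons $DA \leq CB$ and the angle inequality into the length inequality $AF \geq BG$ must be done by a direct hinge comparison of $\triangle ADF$ against $\triangle BCG$ (or their $\rho$-images) rather than by any trigonometric formula, and the precise direction of the base-angle comparison needs to be established with care so that both contributions to $AF - BG$ carry the same sign. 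The side-of-$l$ and betweenness case analysis in the first part is the secondary obstacle, for the same reason: it is entirely driven by convexity and must be checked configuration by configuration.
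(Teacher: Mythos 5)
Your reduction of the first claim to $AF+BG\geq AB$ matches the paper's intermediate goal, but the route is genuinely different, and the difference matters. The paper never case-splits on which side of $l$ the points $A$ and $B$ lie: after the normalisation $CG\leq DF$ it produces the point $H=AC\cap GM$ ($M$ the midpoint of $CD$) via the Crossbar Theorem and runs the exterior-angle chase $\angle AGF\geq\angle AHF\cong\angle MHC\geq\angle MFC\cong\angle GFA$ to conclude $AF\geq AG$, whence $AF+BG\geq AG+GB\geq AB$. In your opposite-side case the claims $AF\geq AH$ and $BG\geq BH$ do turn out to be true, but ``convexity is exactly what forces each comparison'' is the whole lemma in disguise: working it out in coordinates shows the argument is not local to each endpoint --- in some configurations convexity at $A$ alone forces \emph{both} inequalities while convexity at $B$ gives nothing usable, and in others the roles swap --- and the proposal gives no indication of how to establish this synthetically in neutral geometry. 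Moreover, after your same-side reflection the point $G$ lies on the ray $\overrightarrow{DB'}$, not on a ray from $C$, so the convex-quadrilateral structure your opposite-side bookkeeping depends on is destroyed; ``analogous bookkeeping'' is not available verbatim.

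The ``furthermore'' assembly has a genuine, irreparable gap, precisely at the point you flagged. You need both terms of $AF-BG=(FD-GD)+(CB-DA)$ to be nonnegative, and the first term can be negative under the stated hypotheses. Take $D=(-1,0)$, $C=(1,0)$, $E=(-0.2,0.6)$, put $A$ on ray $\overrightarrow{CE}$ beyond $E$ with $AE=0.01$, and $B$ on ray $\overrightarrow{DE}$ beyond $E$ with $BE=AE+(CE-DE)\approx 0.352$. Then $ABCD$ is convex, $AC\cong BD$ and $AE+DE\leq AC$, yet $DF\approx 1.26 < DG\approx 1.33$ (equivalently $\angle ADC<\angle BCD$); the desired $AF>BG$ holds only because $CB-DA\approx 0.23$ outweighs the negative first term. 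Since the two contributions genuinely can carry opposite signs, no amount of care in the base-angle comparison rescues a term-by-term argument, and the hinge comparison you propose ($\triangle ADF$ against $\triangle BCG$, or its reflection $\triangle B'DG$) cannot be executed: those triangles have no pair of congruent sides, which the Hinge Theorem requires. (Your derivation of $DA\leq CB$ is also not literally the hinge inequality --- there the included angles at $E$ are congruent and the sides differ --- and it is false without the constraint $AC\cong BD$.) The paper's argument sidesteps the decomposition entirely: reflect $B$ across $l$ to $B'$, so that $CB'\cong BD\cong CA$; use convexity and $DE\leq CE$ to show $B'$ lies inside $\angle FCA$; then the Hinge Theorem applied to triangles $FCA$ and $FCB'$, which share side $FC$ and satisfy $CA\cong CB'$, gives $AF\geq FB'\cong FB$, and the plain triangle inequality $AF+FB\geq AB$ finishes --- self-contained, with no appeal to the first part, whereas your version additionally needs $AF+BG\geq AB$ from part one.
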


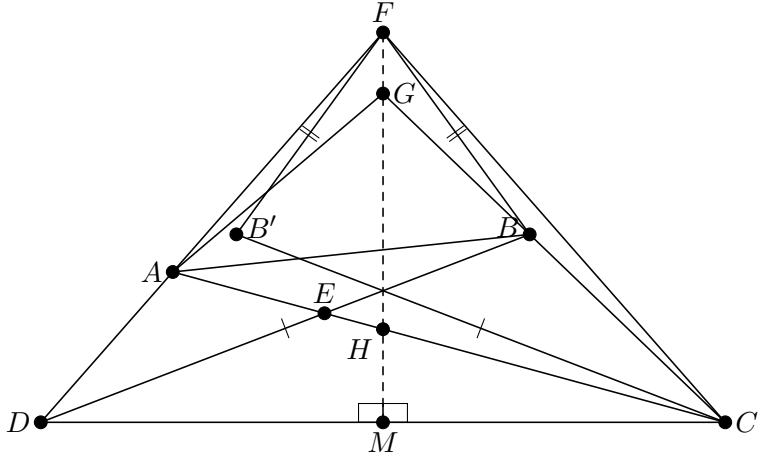
\begin{figure}[h]
  \centering
  \begin{tikzpicture}[xscale=1.3]
    \tkzDefPoints{0.35/2/A, 4/2.5/B, 6/0/C, -1/0/D}
    
    \tkzDefMidPoint(C,D) \tkzGetPoint{M}
    \tkzInterLL(A,C)(B,D) \tkzGetPoint{E}
    
    \tkzDefLine[mediator, normed](D,C)
    \tkzGetPoint{M'}
    \tkzInterLL(D,A)(M,M') \tkzGetPoint{F}
    \tkzInterLL(C,B)(M,M') \tkzGetPoint{G}
    
    \tkzInterLL(C,A)(M,M') \tkzGetPoint{H}
    
    \tkzDefPointBy[reflection=over M--M'](B) \tkzGetPoint{B'}
    
    \tkzDrawSegments[edge](A,B C,D A,C B,D D,F C,G A,G F,C B',F B',C B,F)
    \tkzDrawSegment[edge, dashed](M, F)
    
    \tkzDrawPoints[vertex](A,B,C,D,E,M,F,G,H,B')
    \tkzLabelPoint[left](A){$A$}
    \tkzLabelPoint[left, shift=({0, 0.1})](B){$B$}
    \tkzLabelPoint[right](C){$C$}
    \tkzLabelPoint[left](D){$D$}
    \tkzLabelPoint[above](E){$E$}
    \tkzLabelPoint[below](M){$M$}
    \tkzLabelPoint[above](F){$F$}
    \tkzLabelPoint[right](G){$G$}
    \tkzLabelPoint[below left](H){$H$}
    \tkzLabelPoint[right, shift=({0, 0.1})](B'){$B'$}
    
    \tkzMarkRightAngles(D,M,F C,M,G)
    \tkzMarkSegments(B',C B,D)
    \tkzMarkSegments[mark=||](B',F B,F)
  \end{tikzpicture}
  \caption{Diagram for the proof of \Cref{lem:quad-bisector}}
  \label{fig:quad-bisector}
\end{figure}

\begin{proof}
  Assume without loss of generality that $CG \leq DF$. Let $M$ be the midpoint of $CD$. Note that $A$ is not in the interior of the triangle $\Delta CGM$ (since otherwise $AD$ intersects $GM$, so $DF < CG$), while $AC$ passes through the interior of $\Delta CGM$ so, by the Crossbar Theorem \cite[Proposition 7.3]{hartshorne}, $AC$ intersects $GM$ at some point $H$. Since $M \btw H \btw G$ and $M \btw G \btw F$, it follows that $H \btw G \btw F$. Now
  \[
  \angle AGF \geq \angle AHF \cong \angle MHC \geq \angle MFC \cong \angle GFA.
  \]
  Therefore $AF \geq AG$, so $AF + BG \geq AG + BG \geq AB$.
  
  Now suppose that $AC \cong BD$ and $AE + DE \leq AC$. Let $B'$ be the reflection of $B$ across $l$. Note that $B$ lies in $\angle ADC = \angle FDC$ so, by reflecting across $l$, we find that $\angle B'CD \leq \angle FCD$. Additionally, $DE \leq AC - AE = CE$, so $\angle ACD = \angle ECD \leq EDC \cong B'CD$.
  Hence $A$ lies in $\angle B'CD$, so $B'$ lies in $\angle FCA$. Since $B'C \cong AC$, by the Hinge Theorem \cite[p. 102]{hartshorne} $AF \geq B'F \cong BF$. Since $AF + BF \geq AB$, we conclude that $AF \geq AB/2$.
\end{proof}

\begin{lemma}\label{lem:quad-worst-case}
  Suppose $ABCD$ be a convex quadrilateral. Let $E$ be the intersection of the diagonals $AC$ and $BD$. Let $A'$ and $B'$ be points on the rays $\overrightarrow{DA}$ and $\overrightarrow{CB}$ respectively such that $D \btw A \btw A'$, $C \btw B \btw B'$, and $AA', BB' \leq AB/2$. Either $DF \leq CF$ or $CG \leq DG$. Furthermore, if $AC \cong BD$ and $AE + ED \leq AC$, then $A'D \leq A'C$.
\end{lemma}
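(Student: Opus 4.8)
The plan is to derive both assertions directly from \Cref{lem:quad-bisector}: the present configuration is engineered so that the constraint $AA', BB' \le AB/2$, played against the bound $\max\{AF, BG\} \ge AB/2$ of that lemma, forces at least one of $A'$, $B'$ onto the correct side of the relevant perpendicular bisector. I retain the notation of \Cref{lem:quad-bisector}, writing $l$ for the perpendicular bisector of $CD$ and $F$, $G$ for the feet of the rays $\overrightarrow{DA}$, $\overrightarrow{CB}$ on $l$.

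First I would record the elementary calibration converting positions along a ray into distance comparisons. Since $l$ is the locus of points equidistant from $C$ and $D$, a point of $\overrightarrow{DA}$ is at least as close to $D$ as to $C$ exactly when it lies on the closed subsegment from $D$ to $F$; using $D \btw A \btw A'$ and $DA' = DA + AA'$, $DF = DA + AF$, this gives
\[
  A'D \le A'C \iff AA' \le AF,
\]
and, symmetrically on $\overrightarrow{CB}$, $\;B'C \le B'D \iff BB' \le BG$. Thus the dichotomy to be proved is precisely that $A'D \le A'C$ or $B'C \le B'D$ holds.

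Next I would prove the dichotomy. By \Cref{lem:quad-bisector}, $\max\{AF, BG\} \ge AB/2$. If $AF \ge AB/2$, then $AA' \le AB/2 \le AF$, so the calibration yields $A'D \le A'C$; otherwise $BG \ge AB/2 \ge BB'$, whence $B'C \le B'D$. In either case one alternative holds. For the furthermore clause I would instead invoke the \emph{second} assertion of \Cref{lem:quad-bisector}, which under $AC \cong BD$ and $AE + ED \le AC$ strengthens the bound to $AF \ge AB/2$ for the specific foot $F$ (not merely the maximum). Combining this with $AA' \le AB/2$ and the calibration gives the sharper conclusion $A'D \le A'C$.

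The main obstacle is verifying the calibration with the correct betweenness. One must confirm that the orientation supplied by \Cref{lem:quad-bisector} places $F$ beyond $A$ on $\overrightarrow{DA}$, so that $AF$ is a genuine nonnegative length and $l$ meets the ray at $F$ separating the $D$-side from the $C$-side; only then does $A'$, lying beyond $A$ with $AA' \le AF$, fall on the $D$-side. This is exactly where convexity of $ABCD$ and the relations $D \btw A \btw A'$, $C \btw B \btw B'$ are used, and where the WLOG branch ($AF \ge AB/2$ versus $BG \ge AB/2$) is essential to guarantee the relevant foot is beyond its vertex. Once the lengths along each ray are calibrated against the bisector point, both parts reduce to inequalities already established in \Cref{lem:quad-bisector}, so no fresh appeal to the Crossbar or Hinge Theorems is needed.
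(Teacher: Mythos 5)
Your strategy is the same as the paper's own proof (everything is funnelled through \Cref{lem:quad-bisector}), but as written your argument has two genuine gaps. The first: \Cref{lem:quad-bisector} has as a \emph{hypothesis} that both rays $\overrightarrow{DA}$ and $\overrightarrow{CB}$ meet $l$, so your opening step ``by \Cref{lem:quad-bisector}, $\max\{AF, BG\} \ge AB/2$'' is ill-formed whenever $F$ or $G$ fails to exist. Since these lemmas are ultimately applied in $\Hyp$, where two lines typically do not meet, this is not a degenerate case. The paper's proof treats it explicitly (a ray that never crosses $l$ lies entirely in the half-plane of $D$, resp.\ $C$, so the corresponding disjunct holds at once), and the ``furthermore'' clause needs the same care, since the second assertion of \Cref{lem:quad-bisector} is stated under the same supposition that both feet exist.

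The second gap is the one you yourself flag as ``the main obstacle'' and then leave undischarged, and it is fatal to your branching: the calibration $A'D \le A'C \iff AA' \le AF$ rests on $DF = DA + AF$, i.e.\ on $D \btw A \btw F$, and this betweenness does \emph{not} follow from $AF \ge AB/2$ together with convexity. Concretely, in the Euclidean plane (a model of the neutral geometry in which these lemmas are proved) take $A = (0.7, 1)$, $B = (0.9, 1.2)$, $C = (1, 0)$, $D = (0, 0)$: this is a convex quadrilateral with intersecting diagonals, both feet exist, and $F = (1/2,\, 5/7)$ satisfies $D \btw F \btw A$, so $A$ and hence $A'$ lie strictly on the $C$-side of $l$ and $A'D > A'C$; nevertheless $AF \approx 0.35 > 0.14 \approx AB/2$, so your first branch fires and concludes the false disjunct. (The lemma itself survives because the other disjunct holds: here $BG \approx 4.8$.) Which disjunct holds is governed by which of $A$, $B$ lies on the $D$-side of $l$ --- equivalently, whether the relevant foot lies beyond its vertex --- and not by which of $AF$, $BG$ is at least $AB/2$. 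That is exactly how the paper's without-loss-of-generality step is phrased (``either $\overrightarrow{DA}$ does not intersect $l$, or it intersects $l$ at some point $F$ such that $D \btw A \btw F$ and $AF \ge AB/2$'', relabelling otherwise), and supplying that betweenness, or the relabelling that restores it, is precisely the content your proof is missing; the same omission undermines your ``furthermore'' paragraph, which needs $F$ to exist and to lie beyond $A$ before $AF \ge AB/2$ can be converted into $A'D \le A'C$.
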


\begin{figure}[h]
  \centering
  \begin{tikzpicture}
    \tkzDefPoints{0.35/2/A, 3.2/2.5/B, 6/0/C, -1/0/D}
    
    \tkzDefMidPoint(C,D) \tkzGetPoint{M}
    \tkzInterLL(A,C)(B,D) \tkzGetPoint{E}
    
    \tkzDefLine[mediator, normed](D,C)
    \tkzGetPoint{M'}
    \tkzDefPointOnLine[pos=7](M',M) \tkzGetPoint{N}
    \tkzDefPointOnLine[pos=1.4](D,A) \tkzGetPoint{A'}
    \tkzDefPointOnLine[pos=1.4](C,B) \tkzGetPoint{B'}
    
    \tkzInterLL(D,A)(M,M')
    \tkzGetPoint{F}
    
    \tkzInterLL(C,B)(M,M')
    \tkzGetPoint{G}
    
    \tkzDrawSegments[edge](D,F C,B' C,D A,B A,C B,D)
    \tkzDrawSegment[edge, dashed, ->](M, N)
    
    \tkzDrawPoints[vertex](A,B,C,D,E,A',B',F,G)
    \tkzLabelPoint[left](A){$A$}
    \tkzLabelPoint[right](B){$B$}
    \tkzLabelPoint[right](C){$C$}
    \tkzLabelPoint[left](D){$D$}
    \tkzLabelPoint[above](E){$E$}
    \tkzLabelPoint[above](A'){$A'$}
    \tkzLabelPoint[above](B'){$B'$}
    \tkzLabelPoint[left](F){$F$}
    \tkzLabelPoint[right](G){$G$}
    
    \tkzMarkRightAngles(D,M,N C,M,N)
  \end{tikzpicture}
  \caption{Diagram for the proof of \Cref{lem:quad-worst-case}}
  \label{fig:quad-worst-case}
\end{figure}
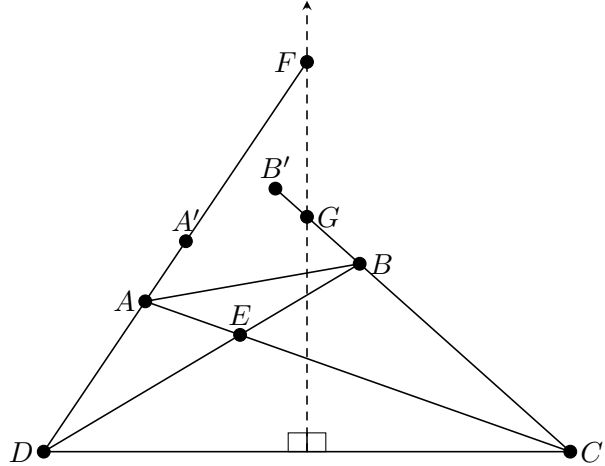

\begin{proof}
  Let $l$ be the perpendicular bisector of $CD$.
  If $\overrightarrow{DA}$ and $\overrightarrow{CB}$ intersect $l$ at points $F$ and $G$ respectively, then \Cref{lem:quad-bisector} implies that $\max\{AF, BG\} \geq AB/2$. Hence we may assume without loss of generality (by relabelling the vertices of $ABCD$) that either $\overrightarrow{DA}$ does not intersect $l$, or it intersects $l$ at some point $F$ such that $D \btw A \btw F$ and $AF \geq AB/2$, in which case $A \btw A' \btw F$. In either case, $A'$ lies to the same side of $l$ as $D$, so $A'D \leq A'C$. If $AC \cong BD$ and $AE + DE \leq AC$, then by \Cref{lem:quad-bisector} the above argument applies without any relabelling.
\end{proof}

\begin{lemma}\label{lem:quad-reduce-bound}
  Let $\e > 0$. Let $BCD$ be a triangle. Let $A$ be a point on $BD$. If $CD \geq \e$, $BD \leq BC$ and $AB = \e/2$, then
  \[
  \cosh(AC) - \cosh(AD) > 2\sinh^3\left(\frac{\e}{2}\right)\csch\left(AC + \frac{\e}{2}\right).
  \]
\end{lemma}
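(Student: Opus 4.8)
The plan is to work entirely inside the hyperbolic triangle $BCD$ and to express everything through the hyperbolic law of cosines taken at the vertex $D$. Since $A$ lies on the segment $BD$, the ray $\overrightarrow{DA}$ coincides with $\overrightarrow{DB}$, so the triangles $ACD$ and $BCD$ share the angle $\theta = \angle ADC = \angle BDC$, and $BD = AB + AD = \e/2 + AD$. Applying the law of cosines to the side $AC$ in triangle $ACD$ gives
\[
  \cosh(AC) - \cosh(AD) = \cosh(AD)\bigl(\cosh(CD) - 1\bigr) - \sinh(AD)\sinh(CD)\cos\theta,
\]
so the whole problem reduces to bounding $\cos\theta$ from above.

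First I would extract such a bound from the hypothesis $BD \leq BC$. Applying the law of cosines to the side $BC$ in triangle $BCD$ and using $\cosh(BC) \geq \cosh(BD)$ yields, after dividing by $\sinh(BD)\sinh(CD) > 0$,
\[
  \cos\theta \leq \coth(BD)\,\frac{\cosh(CD) - 1}{\sinh(CD)} = \coth(BD)\tanh\!\left(\frac{CD}{2}\right).
\]
Substituting this into the previous display and using $\sinh(CD)\tanh(CD/2) = \cosh(CD) - 1$, I can factor out $\cosh(CD)-1$; the remaining bracket $\cosh(AD) - \sinh(AD)\coth(BD)$, placed over the common denominator $\sinh(BD)$, simplifies by the subtraction formula $\sinh(BD)\cosh(AD) - \cosh(BD)\sinh(AD) = \sinh(BD - AD) = \sinh(\e/2)$. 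The expression therefore collapses to
\[
  \cosh(AC) - \cosh(AD) \geq \bigl(\cosh(CD) - 1\bigr)\frac{\sinh(\e/2)}{\sinh(BD)}.
\]
This telescoping is the computational heart of the argument; everything else is bookkeeping. Using $CD \geq \e$ to replace $\cosh(CD) - 1$ by $\cosh(\e) - 1 = 2\sinh^2(\e/2)$, and recalling $BD = AD + \e/2$, I obtain $\cosh(AC) - \cosh(AD) \geq 2\sinh^3(\e/2)\,\csch(AD + \e/2)$.

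It then remains to replace $\sinh(AD + \e/2)$ in the denominator by $\sinh(AC + \e/2)$, which demands the strict inequality $AC > AD$; this is the main obstacle and the only genuinely geometric step. I would establish it by an angle chase in the two triangles. The hypothesis $BD \leq BC$ gives $\angle BCD \leq \angle BDC$, while the ray $\overrightarrow{CA}$ lies \emph{strictly} inside $\angle BCD$ (since $A \neq B$ and $C$ is off the line $BD$, so $A$ cannot lie on $\overrightarrow{CB}$), whence $\angle ACD < \angle BCD \leq \angle BDC = \angle ADC$; comparing the sides opposite these two angles in triangle $ACD$ forces $AC > AD$ strictly. Because this inequality is strict, $\sinh(AD + \e/2) < \sinh(AC + \e/2)$, and the chain becomes strict, giving the desired bound. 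I would remark that strictness is unavoidable here: were $AC = AD$ the left-hand side would vanish while the right-hand side stays positive, so the hypothesis $BD \leq BC$ is doing precisely the work of forcing $AC > AD$.
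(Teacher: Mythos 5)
Your proposal is correct and takes essentially the same route as the paper: both apply the hyperbolic law of cosines at the common angle at $D$ in triangles $ACD$ and $BCD$, eliminate that angle, use the subtraction formula $\sinh(BD)\cosh(AD)-\cosh(BD)\sinh(AD)=\sinh(\e/2)$ together with $CD \geq \e$ and $BD \leq BC$, and reach the intermediate bound $\cosh(AC)-\cosh(AD) \geq 2\sinh^3\left(\frac{\e}{2}\right)\csch\left(AD+\frac{\e}{2}\right)$ before replacing $AD$ by $AC$ in the denominator. The only divergence is your synthetic angle chase to get $AC > AD$, which is redundant: the right-hand side of the bound you have already established is strictly positive, so $\cosh(AC) > \cosh(AD)$ follows at once, which is exactly how the paper concludes.
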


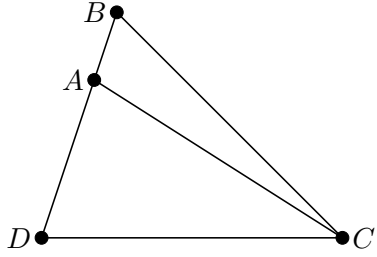
\begin{figure}[h]
  \centering
  \begin{tikzpicture}
    \tkzDefPoints{1/3/B, 4/0/C, 0/0/D}
    
    \tkzDefPointOnLine[pos=0.3](B,D)\tkzGetPoint{A}
    
    \tkzDrawSegments[edge](B,C C,D D,B A,C)
    \tkzDrawPoints[vertex](A,B,C,D)
    \tkzLabelPoint[left](A){$A$}
    \tkzLabelPoint[left](B){$B$}
    \tkzLabelPoint[right](C){$C$}
    \tkzLabelPoint[left](D){$D$}
  \end{tikzpicture}
  \caption{Diagram for the proof of \Cref{lem:quad-reduce-bound}}
  \label{fig:quad-reduce-bound}
\end{figure}

\begin{proof}
  Let $\alpha = \angle ADC = \angle DBC$. By the hyperbolic cosine rule \cite[\textsection 7.12]{beardon},
  \begin{gather}
    \cosh(AC) = \cosh(AD)\cosh(CD) - \sinh(AD)\sinh(CD)\cos(\alpha), \\
    \cosh(BC) = \cosh\left(AD + \frac{\e}{2}\right) \cosh(CD) - \sinh\left(AD + \frac{\e}{2}\right)\sinh(CD)\cos(\alpha).
  \end{gather}
  
  We solve (2) for $\cos(\alpha)$ and substitute into (1), obtaining
  {\allowdisplaybreaks
    \begin{align*}
      \cosh(AC) &= \cosh(AD)\cosh(CD) - \sinh(AD)\frac{\cosh\left(AD + \frac{\e}{2}\right) - \cosh(BC)}{\sinh\left(AD + \frac{\e}{2}\right)} \\
      &= \frac{\cosh(CD)\left(\cosh(AD)\sinh\left(AD + \frac{\e}{2}\right) - \sin(AD)\cosh\left(AD + \frac{\e}{2}\right)\right) + \sinh(AD)\cosh(BC)}{\sinh\left(AD + \frac{\e}{2}\right)} \\
      &= \frac{\cosh(CD)\sinh\left(\frac{\e}{2}\right) + \sinh(AD)\cosh(BC)}{\sinh\left(AD + \frac{\e}{2}\right)} \\
      &\geq \frac{\cosh(\e)\sinh\left(\frac{\e}{2}\right) + \sinh(AD)\cosh\left(AD + \frac{\e}{2}\right)}{\sinh\left(AD + \frac{\e}{2}\right)} \\
      &= \frac{\cosh(\e)\sinh\left(\frac{\e}{2}\right) + \sinh(AD)\cosh\left(AD + \frac{\e}{2}\right) - \cosh(AD)\sinh\left(AD + \frac{\e}{2}\right)}{\sinh\left(AD + \frac{\e}{2}\right)} + \cosh(AD) \\
      &= \frac{\cosh(\e)\sinh\left(\frac{\e}{2}\right) - \sinh\left(\frac{\e}{2}\right)}{\sinh\left(AD + \frac{\e}{2}\right)} + \cosh(AD) \\
      &= \sinh\left(\frac{\e}{2}\right)\left(\cosh(\e) - 1\right)\csch\left(AD + \frac{\e}{2}\right) + \cosh(AD) \\
      &= 2\sinh^3\left(\frac{\e}{2}\right)\csch\left(AD + \frac{\e}{2}\right) + \cosh(AD).
    \end{align*}
  }
  This proves that $AC > AD$, so
  \begin{align*}
    \cosh(AC) - \cosh(AD) &\geq 2\sinh^3\left(\frac{\e}{2}\right)\csch\left(AD + \frac{\e}{2}\right) \\
    &> 2\sinh^3\left(\frac{\e}{2}\right)\csch\left(AC + \frac{\e}{2}\right). \qedhere
  \end{align*}
\end{proof}

\begin{figure}[h]
  \centering
  \begin{tikzpicture}
    \tkzDefPoints{0.35/2/A, 3.2/2.5/B, 4/0/C, -1/0/D}
    
    \tkzDefMidPoint(C,D)
    
    \tkzDefPointOnLine[pos=1.5](D,A) \tkzGetPoint{A'}
    
    \tkzDrawSegments[edge](D,A' C,D A,C A',C)
    \tkzDrawSegment[edge, dashed](B,D)
    
    \tkzDrawPoints[vertex](A,B,C,D,A')
    \tkzLabelPoint[left](A){$A = g_1 v$}
    \tkzLabelPoint[right](B){$B = g_3 v$}
    \tkzLabelPoint[right](C){$C = g_2 v$}
    \tkzLabelPoint[left](D){$D = g_4 v$}
    \tkzLabelPoint[above](A'){$A'$}
    
    \tkzLabelLine[left](A,D){$s$}
    \tkzLabelLine[above](A,C){$r$}
  \end{tikzpicture}
  \caption{Diagram for the proof of \Cref{prop:indiscrete-reduce-bound}}
  \label{fig:indiscrete-reduce-bound}
\end{figure}

\begin{proof}[Proof of \Cref{prop:indiscrete-reduce-bound}]
  Suppose (1), (2) and (3) do not hold.
  There exists a minimal path $P$ containing a short self-intersecting segment $\gamma$ of $T$ with intersecting edges $e_1$ and $e_2$. Let $x$ and $y$ be the types of $e_1$ and $e_2$ respectively. If $e_1$ and $e_2$ are incident to a common vertex, then their endpoints must be collinear, so $\abs{xy} < \max\{\abs{x}, \abs{y}\}$. Hence $y \neq x^{-1}$ and $y \neq x$ (since this would imply that $x$ is elliptic), so $xy$ is a good replacement for either $x$ or $y$. Assume without loss of generality that $xy$ is a good replacement for $x$. Then $\abs{x} - \abs{xy} = \abs{y} \geq \e$.
  
  Henceforth we assume that $e_1$ and $e_2$ are not incident to a common vertex. Let $\{g_1, g_2\}$ and $\{g_3, g_4\}$ be the endpoints of $e_1$ and $e_2$ respectively. Define $A = g_1 v, C = g_2 v, B = g_3 v, D = g_4 v$. We see that $ABCD$ is a convex quadrilateral, since the diagonals $AC$ and $BD$ intersect.
  
  The following cases are the same as those in the proof of \Cref{lem:half-loop-reduces}; we similarly use Figure \ref{fig:replacements} for reference.
  First suppose that either $x \neq y$ or $x^{-1}$ is not the type of an edge in $P$. One of the cases in Figure \ref{fig:replacements} must hold, so we may assume (up to relabelling) that exactly one term in $g_1^{-1} g_4$ is either $x$ or $x^{-1}$, and exactly one term in $g_2^{-1}g_3$ is either $y$ or $y^{-1}$. Let $A'$ be the point on $\overrightarrow{DA}$ such that $D \btw A \btw A'$ and $AA' = \e/2$. By \Cref{lem:quad-worst-case}, we may assume that $A'D \leq A'C$. Let $r$ and $s$ be the lengths of $AD$ and $AC$ respectively. We see that $r = \abs{z}$ and $s = \abs{g}$ for some good replacement $g \in \{g_1^{-1} g_4, g_2^{-1}g_3\}$ for $z \in \{x, y\}$. \Cref{lem:quad-reduce-bound} thus implies that $\cosh(r) - \cosh(s) > \eta(r, \e) > 0$. Solving for $s$, we find
  \[
  s < \cosh^{-1}(\cosh(r) - \eta(r, \e)) < r,
  \]
  from which the result follows.
  
  Now suppose that $x = y$ and that $x^{-1}$ is the type of an edge in $P$. In this case, we choose our labelling so that $g_1$ and $g_4$ are the endpoints of $P$, in which case exactly one term in $g_1^{-1}g_4$ is either $x$ or $x^{-1}$. Let $F \in [A, B] \cap [C, D]$ be the image under $\phi$ of the endpoints of $\gamma$. Then $AF + FD \leq AB$. Defining $A'$ as above, by \Cref{lem:quad-worst-case} $A'D \leq A'C$. The rest follows as above.
\end{proof}
In fact $\e \geq r - \cosh^{-1}(\cosh(r) - \eta(r,\e))$ for all $r \geq \e > 0$, so the definition of $\delta$ in \Cref{prop:indiscrete-reduce-bound} can be simplified; however we omit the proof.

\section{Acknowledgements}
Several figures in this paper were created using the HyperbolicPlane Julia module \cite{scheinerman}.
Thanks to Matthew Conder, Eamonn O'Brien and Jeroen Schillewaert for their feedback on drafts of this paper. Also thanks to the University of Auckland Doctoral Scholarship and the Marsden Fund (grant number 22-UOA-194) for funding this research.

\bibliographystyle{plain}
\bibliography{references.bib}

\end{document}